\documentclass[11pt,a4paper]{amsart}

\usepackage{geometry}
\usepackage{amssymb}
\usepackage{amsthm}
\usepackage{amsmath}
\usepackage{mathdots}
\usepackage{graphicx}
\usepackage{xfrac}
\usepackage{bm}
\usepackage{parskip}
\usepackage{mathtools}
\usepackage{leftindex}

\usepackage[UKenglish]{babel}
\usepackage[dvipsnames]{xcolor}
\usepackage{booktabs}
\usepackage{microtype}
\usepackage{times}
\usepackage[T1]{fontenc}
\usepackage{url}
\usepackage{tikz}

\definecolor{dark-gray}{gray}{0.3}

\usepackage[colorlinks=true]{hyperref}
\hypersetup{
  urlcolor=Blue,
  citecolor=Black,
  linkcolor=dark-gray
}

\tikzset{
  every neuron/.style={
    circle,
    draw,
    minimum size=0.5cm
  }
}

\numberwithin{equation}{section}

\newtheorem{theorem}{Theorem}[section]
\newtheorem{lemma}[theorem]{Lemma}
\newtheorem{proposition}[theorem]{Proposition}
\newtheorem{conjecture}[theorem]{Conjecture}
\newtheorem{corollary}[theorem]{Corollary}

\theoremstyle{definition}

\newtheorem{definition}[theorem]{Definition}
\newtheorem{example}[theorem]{Example}
\newtheorem{remark}[theorem]{Remark}

\renewcommand{\phi}{\varphi}
\renewcommand{\mid}{\mathrel{|}}

\newcommand{\R}{\mathbb{R}}
\newcommand{\N}{\mathbb{N}}
\newcommand{\Z}{\mathbb{Z}}
\newcommand{\Q}{\mathbb{Q}}
\newcommand{\C}{\mathbb{C}}

\newcommand{\cE}{\mathcal{E}}
\newcommand{\cN}{\mathcal{N}}
\newcommand{\cU}{\mathcal{U}}
\newcommand{\boldq}{\mathbf{q}}
\newcommand{\mC}{\mathcal{C}}

\DeclareMathOperator{\md}{\mathrm{mdeg}}
\DeclareMathOperator{\reg}{\mathrm{reg}}
\DeclareMathOperator*{\prob}{\mathbb{P}}
\DeclareMathOperator{\dist}{dist}
\DeclareMathOperator{\vol}{vol}

\newcommand{\codim}{\mathrm{codim}}
\newcommand{\conv}{\operatorname{conv}}
\newcommand{\diag}{\operatorname{diag}}
\newcommand{\inter}{\operatorname{int}}
\newcommand{\diff}[1]{\mathrm{d}{#1}}
\newcommand{\econst}{\mathrm{e}}
\newcommand{\Pfaff}{\mathrm{Pfaff}}
\newcommand{\trans}{\intercal}

\title{Tubular Neighbourhoods of Pfaffian Sets and Applications to Neural Networks}

\author{Paul Lezeau}
\address{(Lezeau) London School of Geometry and Number Theory, Imperial College London, UK}
\email{p.lezeau23@imperial.ac.uk}
\author{Martin Lotz}
\address{(Lotz) Warwick Mathematical Institute, University of Warwick, UK}
\email{martin.lotz@warwick.ac.uk}

\date{}
\begin{document}

\begin{abstract}
We derive bounds for the volume of tubular neighbourhoods of smooth Pfaffian hypersurfaces, generalising known results for algebraic varieties. The bounds are given in terms of the Pfaffian format of the defining functions. 
As an application, we obtain tail bounds on the probability distribution of a condition number measuring the robustness of neural network classifiers with Pfaffian activation functions, in both the uniform and Gaussian settings. In the special case of single-hidden-layer sigmoid networks with rational weights, we derive polynomial-in-width bounds for tubular neighbourhoods of the decision boundary.
\end{abstract}

\maketitle


\section{Introduction}
Pfaffian functions are functions that satisfy triangular systems of first-order partial differential equations with polynomial coefficients. The sets they define generalise algebraic and semi-algebraic sets, allowing for transcendental functions such as $\exp$, $\log$, $\tanh$, etc., while retaining sufficient algebraic structure for effective quantitative bounds. 
Generalising previous work on tubular neighbourhoods of algebraic sets, the paper has three main aims. First, we establish tube-volume bounds for smooth bounded Pfaffian hypersurfaces by bounding the degrees of their Gauss maps using Khovanskii's theorem. Second, we apply these bounds to robustness estimates for neural-network classifiers in the spirit of the classical theory of conditioning in numerical analysis and optimisation. Third, for one-layer sigmoid networks with rational weights, we replace the exponential (Khovanskii) dependence on the width $w$ by a polynomial bound, with sharp leading order $w^n$.

Let $V = \mathcal{Z}(f)$ be a bounded, smooth hypersurface defined by a Pfaffian function $f$ with format $(\alpha, \beta, s)$, $\beta\geq 2$. If $X$ is uniformly distributed in a ball $B(p,\rho)$, we show that (Theorem~\ref{thm:prob_bound})
\begin{equation*}
\prob\{ d(X, V) \le \varepsilon \} \leq C_{\alpha,\beta,s,n}\left[\left(1+(\alpha+\beta+1)\frac{\varepsilon}{\rho}\right)^n - \left(1+\frac{\varepsilon}{\rho}\right)^n\right],
\end{equation*}
where 
$C_{\alpha,\beta,s,n}$ is a constant that depends on the format (see Section~\ref{sec:pfaffian} for the definitions). When specialised to the algebraic case ($s = 0$), the bound reduces to one of the same form as in~\cite{lotz2015volume}, with the degrees of the defining polynomial playing the role of $\beta$.
The key tool is Khovanskii's theorem~\cite{khovanskii1991fewnomials}, Theorem~\ref{thm:khovanskii}, which bounds the number of solutions of a system of Pfaffian equations in terms of a triple $(\alpha, \beta, s)$. 

A prominent class of Pfaffian functions arises from neural networks with smooth activation functions. The logistic sigmoid $\sigma(x)=(1+\mathrm{e}^{-x})^{-1}$, $\tanh$, and many other common activations are Pfaffian, and a neural network employing such activations is itself a Pfaffian function whose format can be expressed in terms of the network depth~$\ell$, number of nodes~$h$, and the format of the activation function. The decision boundaries of neural network classifiers with Pfaffian activations are therefore semi-Pfaffian sets. 
An important problem in machine learning is the robustness of neural network classifiers to both random and adversarial perturbations. Given a data point $x$, the problem of whether there exists a data perturbation of size $\varepsilon$ that leads to misclassification can be reformulated in geometric terms as the problem of whether $x$ lies in an $\varepsilon$-tubular neighbourhood of the decision boundary. In the spirit of the classical theory of conditioning in numerical analysis and optimisation~\cite{burgisser2013condition}, we define the condition number of a classifier as the ratio of the size of the input to the distance to misclassification. Bounding the volume of the tubular neighbourhood of the decision boundary therefore provides quantitative robustness guarantees and, more precisely, tail bounds on the probability distribution of the condition number.
Specifically, for a neural network classifier with $\ell$ hidden layers, $h$ total hidden units, and Pfaffian activations, applying the Gaussian variant of our tube formula to the pairwise decision boundaries yields (Theorem~\ref{thm:condition-gaussian})
\begin{equation*}
  \prob\{\mC_{\overline{x}}(X) > t\} \;\leq\; C_{\ell,h,n,m}\cdot\frac{n(2\ell+2)}{t} + O(1/t^2),
\end{equation*}
where $\overline{x}\in \R^n$, $X\sim\mathcal{N}(\overline{x}, \sigma^2\mathrm{Id})$ and $\mC_{\overline{x}}(X) = \|X-\overline{x}\|/d(X,\Sigma)$ is the local condition number relative to the classifier decision boundary~$\Sigma$. 

One major drawback of this off-the-shelf Pfaffian bound is that the constant $C_{\ell,h,n,m}$ (its explicit form is given in Example~\ref{ex:sigmoid-condition}) carries an exponential ``Khovanskii factor'' $2^{h(h-1)/2}$ arising from the standard Pfaffian intersection bound.
Our second main contribution is to eliminate this exponential factor for single-hidden-layer sigmoid networks with rational weights (Theorem~\ref{thm:prob_bound_single_layer}, Corollary~\ref{cor:nn-condition-single-layer}), obtaining a tail bound that is polynomial in the width of the network. The core estimate is for a scalar network: let $f=c_0+\sum_k d_k\sigma(a_k^{\trans}x+b_k)$ with $\sigma$ the logistic sigmoid and $a_k\in \Q^n$, with common denominator $q$, and $L=q\max_{k,i}|a_{ki}|$. Let $V=\mathcal{Z}(f)$ and $\mC_{\overline{x},V}(X) = \|X-\overline{x}\|/d(X,V)$ be the corresponding local condition number relative to this hypersurface. Then
\begin{equation*}
  \prob\{\mC_{\overline{x},V}(X) > t\} \;\leq\; C_{n,L}\cdot w^{2n}\left[\left(1+\frac{1}{t}\right)^n-1\right]
\end{equation*}
for a constant $C_{n,L}$ that depends on the ambient dimension and the lattice constant of the weights. 

Single-hidden-layer networks are the natural setting in which the width becomes the central parameter. Already with a single hidden layer, the universal approximation theorems of Cybenko~\cite{cybenko1989approximation} and Hornik, Stinchcombe, and White~\cite{hornik1989multilayer} guarantee that finite sums $c_0+\sum_k d_k\sigma(a_k^{\trans}x+b_k)$ of a single sigmoid are dense in the continuous functions on any compact set, and Barron's effective bound~\cite{barron1993universal} shows that the width $w$ controls the quality of approximation, giving an $L^2$ error of order $C_f/\sqrt{w}$ for target functions with finite first Fourier moment $C_f$ (see also the survey of Pinkus~\cite{pinkus1999approximation}). 

The tubular neighbourhood bounds rely on degree bounds for the Gauss-map of generic affine sections of the relevant hypersurface. While for polynomial systems these degrees can be bounded using B\'ezout's theorem, the canonical tool in the Pfaffian setting is Khovanskii's bound, which leads to an exponential factor in the Pfaffian chain length. The rationality of the first-layer weight vectors $a_k$ with bounded denominator $L$ is exploited to bypass Khovanskii's theorem and bound the degree of the Gauss map of $V$ using a Bernstein--Kushnirenko--Khovanskii count on a related system of Laurent polynomials. When dealing with generic affine sections of our hypersurface, the difficulty is that the weights become irrational, breaking the integer lattice on which a Bernstein--Kushnirenko--Khovanskii count relies. We bypass this by passing to a multiplicative chart in which the sigmoids become rational functions: the section then becomes a Pfaffian system whose only transcendental part is the handful of logarithms encoding the section, so its Pfaffian chain is independent of the width $w$, and Khovanskii's theorem yields a bound polynomial in $w$ with a width-independent exponential constant. The leading order term, which controls the condition number tail, is governed by the number of zeros of the network along a line, for which we derive a bound of order $w^n$. The exponent $n$ is sharp for the degree of the Gauss map itself: an explicit grid construction (Proposition~\ref{prop:single-layer-lowerbound}) produces networks of width $w$ whose zero sets have Gauss-map degree $\Omega(w^n)$. We conjecture that the bounds extend to multi-layer networks with Pfaffian activations.

\subsection{Previous and related work}
The problem of bounding the volume of a tubular neighbourhood of a set in terms of its geometric complexity has a long and rich history. In 1840, Steiner~\cite{steiner1840bestimmung} observed that the volume of the $\varepsilon$-neighbourhood of a convex body in $\R^3$ is a cubic polynomial in $\varepsilon$. A celebrated generalisation is due to Weyl~\cite{weyl1939volume}, who showed that the volume of the $\varepsilon$-tubular neighbourhood ($\varepsilon$ small enough) of a compact Riemannian submanifold of $\R^n$ is a polynomial in $\varepsilon$ whose coefficients are intrinsic curvature invariants; see also Hotelling~\cite{hotelling1939tubes} and the monograph by Gray~\cite{gray2004tubes}. Tube formulae came into the radar of numerical analysis through the work of Smale, Kostlan, Renegar, and Demmel, among others, who were interested in the probabilistic analysis of condition numbers (see the references in~\cite{burgisser2013condition,lotz2015volume,basu2021hausdorff}). The key observation is that if the set of ill-posed inputs of a numerical problem can be described as a subset of an algebraic variety, then a bound on the volume of its tubular neighbourhood directly translates into a bound on the probability distribution of the condition number.
Bounds on tubular neighbourhoods have been extended to singular algebraic sets by Basu and Lerario~\cite{basu2021hausdorff}.
In a complementary direction, Zhang and Kileel~\cite{zhang2025covering} bound the covering numbers of real algebraic varieties, images of polynomial maps, and general semialgebraic sets via a slicing argument that counts the connected components of generic affine sections, and deduce volume bounds for tubular neighbourhoods of such sets without any smoothness assumptions. Their bounds capture the correct leading order in $\varepsilon$, but are obtained from coverings by balls and therefore carry dimension-exponential constants rather than the degree-graded coefficients of the Weyl-type tube formula used here; the component counts of generic affine sections that control their estimates play a role analogous to the section degrees of the Gauss map that govern our tube formula, and admit Khovanskii-type bounds in the Pfaffian setting (Section~\ref{sec:pfaffian}). Among their applications are generalization bounds for deep networks with rational or ReLU activations, which measure the complexity of a network class in function space and are thereby complementary to our robustness analysis, which concerns the geometry of the decision boundary in input space.

Neural networks have been studied in the context of Pfaffian functions since the work of Macintyre and Sontag~\cite{macintyre1993finiteness}, who established finiteness results for sigmoidal networks. Karpinski and Macintyre~\cite{karpinski1997polynomial} showed that the VC dimension of sigmoidal Pfaffian networks is polynomial in the number of parameters; this was recently extended by D'Inverno, Bianchini, and Scarselli~\cite{dinverno2024vc} to graph neural networks with general Pfaffian activations. Bianchini and Scarselli~\cite{bianchini2014complexity} studied the topological complexity (sum of Betti numbers) of the decision regions of networks with Pfaffian activation functions, establishing that deep networks can produce exponentially more complex decision boundaries than shallow ones.

The robustness of neural network classifiers with respect to adversarial perturbations has been of practical concern in contexts ranging from spam filtering to medical diagnosis, and a systematic investigation was initiated by Szegedy et al.~\cite{szegedy2013intriguing}. Subsequent work developed efficient attacks and robustness surrogates based on first-order or local geometric approximations, including the fast-gradient method of Goodfellow, Shlens, and Szegedy~\cite{goodfellow2015explaining}, the DeepFool algorithm of Moosavi-Dezfooli, Fawzi, and Frossard~\cite{moosavi2016deepfool}, and the analysis of adversarial, random, and semi-random perturbations by Fawzi, Moosavi-Dezfooli, and Frossard~\cite{fawzi2016robustness}. A complementary theoretical line explains adversarial vulnerability through high-dimensional geometry and concentration of measure; see, for example, Fawzi, Fawzi, and Fawzi~\cite{fawzi2018adversarial} and Mahloujifar, Diochnos, and Mahmoody~\cite{mahloujifar2019curse}. The robustness of classifiers is an inherently geometric problem. A classifier $F\colon \R^n\to \R^m$ partitions the input space into decision regions separated by a decision boundary $\Sigma$. The distance $d(x,\Sigma)$ is the minimal perturbation needed to change the classification of an input~$x$, and one can define a condition number $\mC(x) = \|x\|/d(x,\Sigma)$ by analogy with the theory of conditioning in numerical analysis~\cite{burgisser2013condition}. Bounding the volume of the tubular neighbourhood of $\Sigma$ therefore provides quantitative robustness guarantees and, more precisely, tail bounds on the probability distribution of the condition number.

In the case of neural networks with piecewise-linear activations such as ReLU, the decision boundary is itself piecewise-linear, with combinatorial complexity controlled by the linear-region structure induced across the layers. Mont\'ufar, Pascanu, Cho, and Bengio~\cite{montufar2014} initiated the systematic counting of linear regions, proving in particular exponential lower bounds for deep networks; upper bounds of order $O\!\bigl(\prod_j n_j^n\bigr)$ for an $\ell$-layer ReLU network of widths $n_1, \ldots, n_\ell$ were obtained by Raghu, Poole, Kleinberg, Ganguli, and Sohl-Dickstein~\cite{raghu2017} and Serra, Tjandraatmadja, and Ramalingam~\cite{serra2018}, with refined counts by Hanin and Rolnick~\cite{hanin2019}. A complementary perspective due to Zhang, Naitzat, and Lim~\cite{zhang2018tropical} interprets ReLU networks as tropical rational maps whose expressive power is captured by the Newton polytopes of the associated min-plus polynomials; in the tropical language, linear regions correspond to vertices of the Newton polytope of the network function.

A recent program of neuroalgebraic geometry~\cite{marchetti2025neuroalgebraic} studies the function space of a network, the \emph{neuromanifold} swept out as the weights vary, as a (semi-)algebraic variety, relating its algebraic invariants such as dimension, degree, and singularities to learning-theoretic properties. This shares our outlook of analysing neural networks through real-geometric invariants, including degree notions, but the object and regime differ: that program concerns the function space of polynomial networks, where algebraic geometry applies directly, whereas we study the decision boundary in input space of networks with transcendental Pfaffian activations such as the sigmoid, for which the relevant tame structure is o-minimal and the natural finiteness tools are Khovanskii's fewnomial theory and the Bernstein--Kushnirenko--Khovanskii theorem. 

\subsection{Structure of the paper}
Section~\ref{sec:pfaffian} reviews the necessary background on Pfaffian functions and Pfaffian sets in a self-contained manner. Section~\ref{sec:tubular} derives the tube formula for smooth Pfaffian hypersurfaces and applies it to obtain bounds on the volume of their tubular neighbourhoods. In Section~\ref{sec:tubular-neural}, we apply the tube formula to obtain bounds on the volume of tubular neighbourhoods of the decision boundary of neural network classifiers with Pfaffian activations. We also present our second main result, the tube formula for one-layer sigmoid networks that is polynomial in the width of the network. Section~\ref{sec:robustness} applies the tube formula to obtain tail bounds on the probability distribution of the condition number of neural network classifiers with Pfaffian activations. Section~\ref{sec:conclusions} outlines some future directions.

\subsection{Acknowledgements} The authors are grateful to Abhiram Natarajan for many insightful discussions on Pfaffian geometry and its applications. P.L. was funded by the London Mathematical Society through the Undergraduate Research Bursary URB-2022-69, and by a London School of Geometry and Number Theory--Imperial College London/King's College London/University College London PhD studentship, which is supported by the Engineering and Physical Sciences Research Council [EP/S021590/1]. M.L. was funded by EPSRC Grant EP/W00383X/1. 

\subsection{Notation and conventions} We denote by $\N$ the set of natural numbers including $0$, and by $\N_{>0}$ the set of positive natural numbers. For $n\in \N$, we use the notation $[n] := \{k\in \N_{>0}\colon k\leq n\}$. For $d,n\in \N$, we denote by $\R[X_1, \ldots, X_n]_{(\le d)}$ the vector subspace of the polynomial ring $\R[X_1,\dots,X_n]$ containing polynomials of degree at most $d$. We often implicitly identify a polynomial $P$ with the function $P\colon \R^n\to \R$ given by $P(x)=P(x_1,\dots,x_n)$ for $x\in \R^n$.

\section{Pfaffian functions and Pfaffian sets}\label{sec:pfaffian}
References for the material in this section are~\cite{khovanskii1991fewnomials, zell2003quantitative, gabrielov2004complexity}. We assume throughout that $n\in \N_{>0}$. Pfaffian functions are defined relative to an open domain $\cU\subseteq\R^n$; unless a domain is specified we take $\cU=\R^n$, which covers the neural network activation functions of interest.

\subsection{Pfaffian functions}
Pfaffian functions, introduced by Khovanskii~\cite{khovanskii1991fewnomials}, are functions that satisfy triangular systems of first-order partial differential equations with polynomial coefficients. The sets defined by Pfaffian functions are tame in the sense of o-minimal geometry~\cite{wilkie1996model, wilkie1999theorem, speissegger1999pfaffian}, and many of their geometric and topological properties, such as the number of connected components and the sum of Betti numbers, can be bounded effectively in terms of the format~\cite{gabrielov2004complexity}.

\begin{definition}[Pfaffian function]
Let $\cU \subset \R^n$ be an open set. A \emph{Pfaffian chain} of order $s \in \N$ and chain-degree $\alpha \in \N_{>0}$ over $\cU$ is a sequence of functions $\boldq= (q_1, \dotsc, q_s)$ with $q_i\in C^\infty(\cU)$ for $i\in [s]$, such that there exist polynomials $P_{ij}\in \R[X_1,\dots,X_n,Y_1,\dots,Y_i]_{\leq \alpha}$, for $i\in [s]$ and $j\in [n]$, that verify
\begin{equation}\label{eq:pfaffchain}
\frac{\partial q_i}{\partial x_j}(x) = P_{ij}(x, q_1(x), \dotsc, q_i(x)).
\end{equation}
A function $g(x)=P(x,q_1(x),\dots,q_s(x))$, with $P\in \R[X_1,\dots,X_n,Y_1,\dots,Y_s]_{\leq \beta}$ for $\beta\in \N$, is called a \emph{Pfaffian function} of chain degree $\alpha$, degree $\beta$, and order $s$.  A function $\cU\to \R^m$ is called Pfaffian if all its components are Pfaffian. 
\end{definition}

The triple $(\alpha, \beta, s)$ is called a \emph{format} of $g$. We denote by $\Pfaff_{\boldq,\beta}(\cU)$ the set of all Pfaffian functions over $\cU$ with chain $\boldq$ and degree $\beta$, and by $\Pfaff_{\alpha,\beta,s}(\cU)$ the set of all Pfaffian functions over $\cU$ with format $(\alpha,\beta,s)$.  When we say that a Pfaffian function $\cU\to \R^m$ has a particular format, we mean that every component has that same format, and we write $\Pfaff_{\alpha,\beta,s}(\cU; \R^m)$.

\begin{remark}
 Note that the Pfaffian chain associated with a Pfaffian function, and hence also a format, is not unique. In particular, $\Pfaff_{\alpha,\beta,s}(\cU)\subseteq \Pfaff_{\alpha',\beta',s'}(\cU)$ for $\alpha'\geq \alpha$, $\beta'\geq \beta$, and $s'\geq s$. In the following, we will often leave the chain implicit, and our results will be stated in terms of the format.
\end{remark}

We call a Pfaffian function $g\colon \cU\to \R$ \emph{autonomous} with respect to a Pfaffian chain $\boldq$ if 
\begin{equation}\label{eq:simplepfaff}
  \frac{\partial q_i}{\partial x_j}(x) = P_{ij}(q_1(x),\dots,q_i(x))
\end{equation}
for some $P_{ij}\in \R[Y_1,\dots,Y_i]$, and $g(x)=P(q_1(x), \dots, q_s(x))$ for some $P\in \R[Y_1,\dots,Y_s]$. 
Note that being autonomous is a property of a given Pfaffian representation of a function, not a property of the function itself. Every Pfaffian function can be made autonomous by adjoining the coordinate functions $x_1,\dots,x_n$ to the Pfaffian chain. In what follows, we often work with a fixed chain that may be implicit, and refer to a Pfaffian function as autonomous if it is autonomous with respect to that chain.

\begin{example}\label{ex:1} The following are simple examples of Pfaffian functions.
        \begin{enumerate}
            \item A polynomial $P \in \R[X_1, \dots, X_n]$ gives rise to a Pfaffian function with format $(\alpha,\deg P,0)$ for any $\alpha>0$ via the evaluation homomorphism;
            \item $\exp\colon \R\to \R$ is Pfaffian with format $(1,1,1)$. A Pfaffian chain is $\boldq=(\exp)$ and $P_{11}(X,Y_1)=Y_1$;
            \item $\tanh$ is Pfaffian with format $(2,1,1)$. A Pfaffian chain is $\boldq=(\tanh)$ and $P_{11}(X,Y_1)=1-Y_1^2$;
            \item The logistic sigmoid $\sigma = (1 + \econst^{-x})^{-1}$ is Pfaffian with format $(2, 1, 1)$. A Pfaffian chain is $\boldq=(\sigma)$ and $P_{11}(X,Y_1)=Y_1(1-Y_1)$;
            \item $\arctan$ is Pfaffian with format $(3, 1, 2)$. A Pfaffian chain is $\boldq=((1+x^2)^{-1},\arctan(x))$. The polynomials are $P_{11}(X,Y_1) = -2XY_1^2$ and $P_{21}(X,Y_1,Y_2)=Y_1$. 
        \end{enumerate}
Examples (2-4) are autonomous with respect to the given chains, while (1) and (5) are not.
\end{example}



Pfaffian functions enjoy some convenient closure properties under algebraic operations and composition, as illustrated in the following two results (see also~\cite[Proposition 1.8]{zell2003quantitative}).

\begin{lemma}\label{le:pfaffalgebra}
Let $g\in \Pfaff_{\alpha, \beta, s}(\cU)$ and $h\in \Pfaff_{\alpha', \beta', s'}(\cU)$. Then: 
\begin{enumerate}
\item $g + h \in \Pfaff_{\max\{\alpha,\alpha'\}, \max\{\beta,\beta'\},s+s'}(\cU)$;
\item $gh \in \Pfaff_{\max\{\alpha,\alpha'\}, \beta+\beta',s+s'}(\cU)$;
\item For each $i\in [n]$, $\partial g/\partial x_i \in \Pfaff_{\alpha,\alpha+\beta-1,s}(\cU)$. 
\end{enumerate}
\end{lemma}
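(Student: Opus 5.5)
The plan is to build a joint chain and work directly with the representing polynomials. Write $g(x)=P(x,\boldq(x))$ with chain $\boldq=(q_1,\dots,q_s)$ of chain degree $\alpha$, and $h(x)=Q(x,\boldq'(x))$ with chain $\boldq'=(q'_1,\dots,q'_{s'})$ of chain degree $\alpha'$. Concatenate them into $\tilde\boldq=(q_1,\dots,q_s,q'_1,\dots,q'_{s'})$ of length $s+s'$. This is still a Pfaffian chain: the derivatives of the $q_i$ are unchanged. For $q'_k$, its derivative $\partial q'_k/\partial x_j = P'_{kj}(x,q'_1,\dots,q'_k)$ depends only on variables that precede it in the new ordering. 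So the triangularity condition \eqref{eq:pfaffchain} holds, and each defining polynomial has degree at most $\max\{\alpha,\alpha'\}$, since the degrees are not changed by renaming variables.

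For item (1), view $P$ and $Q$ as polynomials in $X,Y_1,\dots,Y_{s+s'}$. Then $P+Q$ has degree at most $\max\{\beta,\beta'\}$, and $g+h=(P+Q)(x,\tilde\boldq(x))$.

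For item (2), the product $PQ$ has degree at most $\beta+\beta'$, and $gh=(PQ)(x,\tilde\boldq(x))$.

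For item (3) we keep the original chain $\boldq$ and use the chain rule:
\[
\frac{\partial g}{\partial x_i}(x)=\frac{\partial P}{\partial X_i}(x,\boldq(x))+\sum_{k=1}^s \frac{\partial P}{\partial Y_k}(x,\boldq(x))\,P_{ki}(x,q_1(x),\dots,q_k(x)).
\]
So $\partial g/\partial x_i=R(x,\boldq(x))$ with
\[
R=\partial_{X_i}P+\sum_k (\partial_{Y_k}P)\,P_{ki}.
\]
Each summand $(\partial_{Y_k}P)P_{ki}$ has degree at most $(\beta-1)+\alpha$. The term $\partial_{X_i}P$ has degree at most $\beta-1\le\alpha+\beta-1$, using $\alpha\ge1$. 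Hence $\deg R\le \alpha+\beta-1$, and the format is $(\alpha,\alpha+\beta-1,s)$.

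The only delicate points are bookkeeping. The first is the degenerate case $\beta=0$: then $g$ is constant, its derivative is $0$, and the claim holds trivially with the convention that the degree bound $\alpha-1\ge0$ is still admissible. The second is checking that the concatenated chain respects the ordering required in \eqref{eq:pfaffchain}. Neither is a real obstacle; the main thing to verify carefully is the degree count in (3).
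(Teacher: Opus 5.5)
Your proposal is correct and follows the paper's proof: (1) and (2) come from concatenating the two Pfaffian chains, and (3) from the chain rule $\partial g/\partial x_i=\partial_{X_i}P+\sum_k(\partial_{Y_k}P)\,P_{ki}$ together with the degree count $\alpha+\beta-1$. Your extra remarks — using $\alpha\ge 1$ to absorb the $\partial_{X_i}P$ term, and the trivial case $\beta=0$ — make explicit some bookkeeping that the paper leaves implicit.
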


\begin{proof}
The first two statements are straightforward consequences of the fact that the concatenation of Pfaffian chains is again a Pfaffian chain. For the third statement, assume $g(x)=P(x,q_1(x),\dots,q_s(x))$ for a polynomial $P\in \R[X_1,\dots,X_n,Y_1,\dots,Y_s]_{\leq \beta}$. Then
\begin{equation*}
  \frac{\partial g}{\partial x_i} = \frac{\partial P}{\partial x_i}+\sum_{\ell=1}^s \frac{\partial P}{\partial y_\ell}\frac{\partial q_\ell}{\partial x_i}= \frac{\partial P}{\partial x_i}+\sum_{\ell=1}^s \frac{\partial P}{\partial y_\ell}P_{\ell i}(x,q_1(x),\dots,q_\ell(x)).
\end{equation*}
The resulting expression is a polynomial in $x$ and the $q_j$, with degree bounded by $\alpha+\beta-1$.
\end{proof}

\begin{remark}\label{re:1}
Note that the bound on the length of $g+h$ and $gh$ is in general not sharp: if $g$ and $h$ are Pfaffian with respect to the same chain $\boldq$ of length $s$, then clearly the length of $g+h$ and $gh$ is also $s$. It also follows from Lemma~\ref{le:pfaffalgebra} that a linear combination of Pfaffian functions $g_1,\dots,g_m$ with formats $(\alpha_i,\beta_i,s_i)$ is Pfaffian with format $(\max_i \{\alpha_i\},\max_i\{\beta_i\}, \sum_i s_i)$. 
\end{remark}

\begin{lemma}\label{le:composition}
Let $g\in \Pfaff_{\alpha,\beta,s}(\cU;\R^m)$ and $h\in \Pfaff_{\alpha',\beta',s'}(\R^m)$ be Pfaffian functions, and assume $s'\geq 1$. 
\begin{enumerate}
\item $h\circ g \in \Pfaff_{(\alpha'+1)\beta+\alpha-1, \beta\beta', ms+s'}(\cU)$;
\item If $h$ is autonomous, then $h\circ g \in \Pfaff_{\alpha'+\alpha+\beta-1,\beta', ms+s'}(\cU)$;
\item If all the functions in $g$ depend on the same Pfaffian chain $\boldq$ of length $s$, then $h\circ g$ has Pfaffian order $s+s'$.
\end{enumerate}
\end{lemma}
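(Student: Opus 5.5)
The plan is to build an explicit Pfaffian chain for $h\circ g$ by concatenating the chains of the components of $g$ with the chain of $h$ composed with $g$, and then to read off the chain degree and degree by differentiating with the chain rule.

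Write $g=(g_1,\dots,g_m)$ with $g_k(x)=P_k(x,\boldq^{(k)}(x))$, where $\boldq^{(k)}$ is a chain of length $s$ and degree $\alpha$ and $\deg P_k\le\beta$. Write $h(y)=Q(y,r_1(y),\dots,r_{s'}(y))$, where $(r_1,\dots,r_{s'})$ is a chain over $\R^m$ with $\partial r_i/\partial y_k=R_{ik}(y,r_1,\dots,r_i)$, $\deg R_{ik}\le\alpha'$ and $\deg Q\le\beta'$.

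The candidate chain on $\cU$ is
\[
  \boldq^{(1)},\dots,\boldq^{(m)},\ r_1\circ g,\dots,r_{s'}\circ g,
\]
which has length $ms+s'$. In case (3) the $\boldq^{(k)}$ all coincide, so we list them once and the length drops to $s+s'$.

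The first block already satisfies \eqref{eq:pfaffchain} with degree $\alpha$. For the second block, the chain rule gives
\[
  \frac{\partial (r_i\circ g)}{\partial x_j}=\sum_{k=1}^m R_{ik}\bigl(g(x),r_1(g(x)),\dots,r_i(g(x))\bigr)\,\frac{\partial g_k}{\partial x_j}(x).
\]
This expression must be a polynomial in $x$, the $\boldq^{(k)}$ and the $r_\ell\circ g$ with $\ell\le i$; the triangularity holds because only $r_1\circ g,\dots,r_i\circ g$ appear. To get it:
\begin{itemize}
  \item Substitute $y_k=P_k(x,\boldq^{(k)})$ into $R_{ik}$. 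The variables $y$ appear with degree at most $\alpha'$, each becomes a polynomial of degree at most $\beta$, and the $r$-variables stay linear. The result has degree at most $\alpha'\beta$ in the general case.
  \item By Lemma~\ref{le:pfaffalgebra}(3), $\partial g_k/\partial x_j$ is a polynomial of degree at most $\alpha+\beta-1$ in the first-block chain.
\end{itemize}
The total degree is therefore at most $\alpha'\beta+\alpha+\beta-1=(\alpha'+1)\beta+\alpha-1$, which is the chain degree claimed in (1).

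For the function itself, $h\circ g=Q(g(x),r_1\circ g,\dots)$. Substituting the $P_k$ for the $y$-variables of $Q$ gives degree at most $\beta\beta'$, using $\beta\ge1$ so that the $r$-variables, which are not scaled, are also bounded by $\beta\beta'$. This proves (1).

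For (2), autonomy means that $R_{ik}$ and $Q$ involve no $y$-variables. Then $R_{ik}(r_1\circ g,\dots)$ has degree at most $\alpha'$ in the new chain variables, so the chain degree is at most $\alpha'+\alpha+\beta-1$. Likewise $h\circ g=Q(r\circ g)$ has degree at most $\beta'$.

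The main point of care is the degree bookkeeping. We need to check that:
\begin{itemize}
  \item substituting into $R_{ik}$ multiplies only the $y$-degree by $\beta$;
  \item the product with $\partial g_k/\partial x_j$ adds degrees, since the two factors are polynomials in disjoint groups of variables, up to the shared first-block chain;
  \item the hypothesis $s'\ge1$ is only used so that the second block is nonempty, which keeps the format statement meaningful.
\end{itemize}
Smoothness of $r_i\circ g$ on $\cU$ is immediate, because $h$ is defined on all of $\R^m$.
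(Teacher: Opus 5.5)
Your proposal is correct and follows essentially the same route as the paper's proof: you concatenate the component chains with $r_1\circ g,\dots,r_{s'}\circ g$, check the Pfaffian property via the chain rule, and bound the chain degree by $\alpha'\beta+(\alpha+\beta-1)$ using Lemma~\ref{le:pfaffalgebra}(3); the autonomous case then loses the $g(x)$-substitution and gives $\alpha'+\alpha+\beta-1$ and $\beta'$. Your explicit remark that $\beta\ge 1$ is what keeps the unscaled $r$-variables within the $\alpha'\beta$ and $\beta\beta'$ bounds is a small point the paper leaves implicit.
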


\begin{proof}
Let $g=(g_1,\dots,g_m)$, and for each $i\in [m]$, let $\boldq^{(i)}=(q_1^{(i)},\dots,q_s^{(i)})$ be a Pfaffian chain for $g_i$. Let $\boldq'=(q_1',\dots,q_{s'}')$ be a Pfaffian chain for $h$. Let $P$ and $P_i$, $i\in [m]$, be the polynomials representing $h$ and the $g_i$, respectively. The composition $h\circ g$ is then represented as 
\begin{equation}\label{eq:longpoly}
 (h\circ g)(x) = P(P_1(x,\boldq^{(1)}(x)),\dots,P_m(x,\boldq^{(m)}(x)),\boldq'(g(x))),
\end{equation}
which is clearly a polynomial in $x=(x_1,\dots,x_n)$ and in the chain
\begin{equation}\label{eq:newchain}
  (q_1^{(1)},\dots,q_s^{(1)},\dots,q_1^{(m)},\dots,q_s^{(m)},q'_1\circ g,\dots,q_{s'}'\circ g)
\end{equation}
of length $ms+s'$. It remains to be seen that this is a Pfaffian chain. For the first $ms$ functions in this chain there is nothing to show. It remains to be seen that the derivatives of $q_i'\circ g$ can be expressed as polynomials in $x$ and in the previous elements of the chain. In fact, for $j\in [n]$ we have
\begin{equation}\label{eq:composite}
  \frac{\partial (q'_i\circ g)}{\partial x_j} = \sum_{\ell=1}^m \frac{\partial q'_i}{\partial x_\ell}\frac{\partial g_\ell}{\partial x_j} = \sum_{\ell=1}^m P'_{i\ell}(g(x),q_1'\circ g(x),\dots,q_i'\circ g(x))  \frac{\partial g_\ell}{\partial x_j},
\end{equation}
where the $P_{i\ell}'$ are the polynomials associated to the chain $\boldq'$. 
This shows that the partial derivative is a polynomial in $q_1',\dots,q_i'$. 
Note that $g_{\ell}$ and, by virtue of Lemma~\ref{le:pfaffalgebra}(3), its partial derivatives are polynomials in $q_1^{(\ell)},\dots,q_s^{(\ell)}$. This establishes that~\eqref{eq:newchain} is a Pfaffian chain of length $ms+s'$. 

To prove the degree bounds in the general case (1), note that the degree of the composition of polynomials is bounded by the product of the degrees. The degree of $P_{i\ell}'(g(x),q_1',\dots,q_i')$ is bounded by $\alpha'\beta$, and Lemma~\ref{le:pfaffalgebra}(3) shows that the degree of $\partial g_{\ell}/\partial x_i$ is bounded by $\alpha+\beta-1$, establishing the bound $(\alpha'+1)\beta+\alpha-1$ on the degree of the new Pfaffian chain~\eqref{eq:newchain}. 
Moreover, the degree of the polynomial~\eqref{eq:longpoly} representing $h\circ g$ is clearly bounded by $\beta\beta'$. 
For the autonomous case (2), note that the degree of~\eqref{eq:composite} is $\alpha'+\alpha+\beta-1$ if the $P_{ij}'$ only depend on the elements $q_1',\dots,q_i'$ and not explicitly on $x$, since $P'_{ij}(q_1'\circ g,\dots,q_i'\circ g)$ has degree $\alpha'$ in the last $s'$ chain elements while $\partial g_\ell/\partial x_j$ has degree $\alpha+\beta-1$ in the remaining variables. We also observe from~\eqref{eq:longpoly} that in the autonomous case, $h\circ g$ depends only on the last $s'$ elements of its Pfaffian chain in the same way that $h$ depends on its Pfaffian chain $\boldq'$. This implies that the degree is bounded by $\beta'$.
Case (3) is clear.
\end{proof}

\begin{remark}\label{re:affine}
The special case where $g\colon \R^n\to \R^m$ is a linear map shows that the Pfaffian structure is invariant under affine change of coordinates.
\end{remark}

We are also interested in the geometric objects defined by Pfaffian functions.

\begin{definition}
A \emph{Pfaffian set} (or Pfaffian variety) is a set of the form 
\begin{equation*}
  V = \mathcal{Z}(g_1, \dotsc, g_k) = \{x\in \R^n\colon g_1(x)=\cdots = g_k(x)=0\},
\end{equation*}  
where the $g_i\colon \R^n\to \R$ are Pfaffian functions. A {\em basic semi-Pfaffian set} is a set of the form
\begin{equation*}
 B = \{x\in \R^n \colon g_1(x)=\cdots = g_k(x)=0, h_1(x)>0,\dots,h_{\ell}(x)>0\},
\end{equation*}
where the $g_i$ and $h_j$ are Pfaffian functions. A {\em semi-Pfaffian set} is a finite union of basic semi-Pfaffian sets.
\end{definition}

Note that semi-Pfaffian sets are precisely the sets that can be written as unions, intersections, and complements of sets defined by expressions of the form $g_i(x) \star 0$, with $\star \in \{=,\, \leq,\, \geq\}$. 

\begin{remark}
  Since the concatenation of Pfaffian chains is again a Pfaffian chain, we can assume without loss of generality that the functions $g_i$ and $h_j$ in the definition of a (semi-)Pfaffian set are Pfaffian with respect to the same Pfaffian chain.
\end{remark} 

We conclude this subsection with the following central result on Pfaffian systems.
We call a solution $a\in \R^n$ of a system of equations $F(x)=0$ regular (or non-degenerate), if the differential $\diff{F}(a)$ has maximal rank. 

\begin{theorem}[Khovanskii] \label{thm:khovanskii}
Let $\cU\subseteq\R^n$ be an open domain and let $F=(f_1,\dots,f_n) \colon \cU\to \R^n$ be a Pfaffian function over $\cU$ with chain $\boldq=(q_1,\dots,q_s)$ and component-wise formats $(\alpha, \beta_i, s)$, for which the functions in the Pfaffian chain depend only on $\{x_1,\dots,x_k\}$ for some $k\leq n$. Then the number of regular real solutions of the system $F(x) = 0$ in $\cU$ is bounded by
\begin{equation*}
2^{\frac{s(s - 1)}{2}} \beta_1 \dotsm \beta_n \left(\beta_1 + \dotsb + \beta_n - n + \min\{k, s\}\alpha + 1 \right)^s.
\end{equation*}
\end{theorem}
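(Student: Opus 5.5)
The approach is Khovanskii's original Rolle--Khovanskii argument, by induction on the chain length $s$. Each chain function $q_i$ is eliminated by replacing the equation in which it is hardest to control with the vanishing of a suitable Jacobian-type determinant. For $s=0$ the system is polynomial, and Bézout's theorem bounds the number of regular real solutions by $\beta_1\cdots\beta_n$, which matches the stated bound. For the inductive step, assume $s\geq 1$ and write $q=q_s$. Replace $q$ by a new variable $y$. This gives the system $f_i(x,q_1,\dots,q_{s-1},y)=0$ in $n+1$ unknowns, cut out together with the graph hypersurface $\Gamma=\{y=q(x)\}$. Then perturb to the general-position system $f_i=\varepsilon_i$ with small regular values. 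Regular solutions persist under small perturbation, so this does not decrease the count, and it makes the curve $\Gamma_0=\{(x,y): f_1=\dots=f_{n-1}=0 \text{ (with } y \text{ free)}\}$ a smooth one-dimensional Pfaffian set of lower order.

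The key step is the Khovanskii--Rolle lemma. Let $\omega = dy - \sum_j P_{sj}(x,q_1,\dots,q_s)\,dx_j$ be the $1$-form whose separating solution is the graph of $q$. Graphs of Pfaffian functions are \emph{separating solutions} of such Pfaffian $1$-forms, and on a separating solution the Rolle-type inequality holds: the number of intersection points of $\Gamma$ with a smooth curve $\gamma$ is at most (number of noncompact components of $\gamma$) $+$ (number of points of $\gamma$ where $\omega|_\gamma$ vanishes). Apply this to the curve $\gamma$ cut out by $n$ of the $n+1$ equations (the $f_i$ with $y$ replacing $q$). The contact condition "$\omega|_\gamma=0$" becomes a determinant $\det(\text{Jacobian of } (f_1,\dots,f_n) \text{ augmented by the row of } \omega)=0$. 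This is a polynomial in $x,y,q_1,\dots,q_{s-1}$ of degree at most $\sum_i(\beta_i-1)+\alpha+\ldots$, where the $\alpha$ enters through $P_{sj}$ and through the derivatives of the remaining chain, following Lemma~\ref{le:pfaffalgebra}(3). The restriction that the chain depends only on $x_1,\dots,x_k$ means only $\min\{k,s\}$ of the coordinate directions contribute an extra $\alpha$ in this degree count. Both resulting counts are systems of order $s-1$: the tangency points form a square system with one extra equation, and the noncompact branches are counted by intersecting with a large sphere or a generic hyperplane, again giving order $s-1$ systems. The induction hypothesis applies to each. The degrees add up to at most $\beta_1\cdots\beta_n$ times a factor $(\sum\beta_i - n+\min\{k,s\}\alpha+1)$, and the factor $2$ per step (from counting components plus tangencies and doubling at the boundary) accumulates to $2^{s(s-1)/2}$, since at step $j$ the number of chain functions still to be eliminated contributes $2^{j-1}$.

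The main obstacle is the bookkeeping in the inductive step. One must check that the degree of the new determinantal equation is exactly what makes the product telescope to $(\beta_1+\dots+\beta_n-n+\min\{k,s\}\alpha+1)^s$. One must also verify that the noncompact-branch count does not exceed the tangency count, so that only the factor $2^{s-1}$ is lost at each stage. I would first handle the case $k=n$ with crude degrees, then refine by noting that only the $k$ derivatives $\partial q/\partial x_j$, $j\le k$, involve $\alpha$. Genericity, namely smoothness of the curves and non-degeneracy of the new systems, is arranged by Sard's theorem applied to the perturbations $\varepsilon_i$. Since the result is classical, I would ultimately cite \cite{khovanskii1991fewnomials} (see also \cite{zell2003quantitative,gabrielov2004complexity}) for the precise constants.
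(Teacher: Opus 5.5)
Your overall route matches the paper's. The paper gives no argument beyond citing Khovanskii (\S 3.12, Corollary~5 of \cite{khovanskii1991fewnomials}) and noting that the Rolle induction only involves the $k$ variables on which the chain depends. That is your observation that only the derivatives $\partial q/\partial x_j$, $j\le k$, carry $\alpha$. The bookkeeping you sketch around the citation, however, would not give the stated constant.

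Put $T:=\sum_i\beta_i-n+\min\{k,s\}\alpha+1$. A Cauchy--Binet expansion of the contact determinant shows that each of its terms contains at most $\min\{k,s\}$ factors $P_{\ell j}$: at most one per chain index $\ell\le s$ and per column $j\le k$. So its degree is at most $T-1$. The contact system has order $s-1$, and its chain still depends only on the original $k$ variables; the new variable $y$ never enters it, so the inductive hypothesis is used with $\min\{k,s-1\}$, not $\min\{n+1,s-1\}$. Hence its base term is at most $2T-2$, and it has at most $2^{\binom{s-1}{2}}\beta_1\cdots\beta_n(T-1)(2T)^{s-1}$ regular solutions. The noncompact branches must then contribute at most $2^{\binom{s-1}{2}}\beta_1\cdots\beta_n(2T)^{s-1}$. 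This turns $T-1$ into $T$, and $2^{\binom{s-1}{2}}\beta_1\cdots\beta_n\,T(2T)^{s-1}=2^{\binom{s}{2}}\beta_1\cdots\beta_n T^s$. So $2^{s(s-1)/2}$ comes from the base term doubling and being raised to the power $s-1$ at each step. It does not come from a factor $2$ per step, which would give only $2^s$. The check you single out, noncompact count at most tangency count, need not hold. Even if it did, it would bound the total by twice the contact count, which overshoots by a factor of about $2$ and does not close the induction.

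Your handling of noncompact branches is where the sketch actually breaks. A generic hyperplane need not meet a noncompact branch at all. A large sphere $\|x\|^2+y^2=R^2$ with $R$ a regular value does work, but only together with the fact that each arc of $\gamma$ inside the ball has two endpoints on the sphere. The sphere system has degrees $(\beta_1,\dots,\beta_n,2)$, so induction gives at most $2\cdot2^{\binom{s-1}{2}}\beta_1\cdots\beta_n(T+1)^{s-1}$ points. Halving cancels the $2$, and since $T+1\le 2T$ this is the required term.

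More seriously, the sphere only captures branches escaping to infinity, so it needs $\gamma$ to be closed in $\R^{n+1}$. That holds for $\cU=\R^n$ but not for a general open $\cU$, where branches can run into $\partial\cU\times\R$. This is not a removable technicality: for arbitrary open $\cU$ the bound is false. Take the spiral strip $\cU=\{(r\cos\phi,r\sin\phi)\colon \phi>0,\ \phi<r<\phi+1\}$, which is open, connected and simply connected. A branch $\theta$ of the angle gives the chain $((x_1^2+x_2^2)^{-1},\theta)$ with $\alpha=3$ and $s=k=2$. The system $x_2=0$, $x_1-\theta-\tfrac12=0$ has $\beta_1=\beta_2=1$ and regular solutions $(2\pi m+\tfrac12,0)$ for all $m\ge1$. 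That is infinitely many, against the bound $98$. So what you can legitimately cite is the statement for $\cU=\R^n$, or for domains reducible to it, such as the orthant in Proposition~\ref{prop:single-layer-ideg} via $y_j=\econst^{u_j}$.

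One smaller slip: your $\Gamma_0$ is cut out by $n-1$ equations in $n+1$ unknowns, so it is a surface. The curve to use is $\{f_1=\cdots=f_n=0\}\subset\R^{n+1}$.
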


In~\cite[\S 3.12, Corollary 5]{khovanskii1991fewnomials}, a version of Theorem~\ref{thm:khovanskii} is derived from more general results. 
A pedestrian derivation of the finiteness of the set of solutions can be found in~\cite{Marker1997}; getting the precise bound as in Khovanskii's theorem from this approach 
requires a considerable amount of additional work.
The standard formulations (e.g.,~\cite[Theorem 3.1]{gabrielov2004complexity}) use $\min\{n,s\}$ in place of $\min\{k,s\}$. The refined bound follows from the proof in~\cite{khovanskii1991fewnomials}, where the Rolle-type induction only involves the $k$ variables on which the chain depends.
For a discussion of the sharpness of the bounds, see~\cite{bickerton2026sharpnesskhovanskiisbezouttypebound}. 

The following result is a variant of~\cite[Corollary 3.3]{gabrielov2004complexity} and~\cite[Theorem 2.3]{jones2012density}, using the refined bound of Theorem~\ref{thm:khovanskii}; the refinement is advantageous when the Pfaffian chain is short ($s<n$) or depends on few of the variables ($k<n$).

\begin{corollary}
  Let $F\in \Pfaff_{\alpha,\beta,s}(\R^n;\R^m)$ be a Pfaffian function with chain $\boldq=(q_1,\dots,q_s)$, for which the functions in the Pfaffian chain depend only on $k\leq n$ of the variables. Then the number of connected components of the zero set $\mathcal{Z}(F)$ is bounded by 
\begin{equation}\label{eq:jones}
2^{\frac{s(s - 1)}{2}+1} (\alpha+2\beta-1)^{n-1}\beta \left( n (\alpha+2\beta-2)+\alpha(\min\{k, s\}-1) + 2 \right)^s.
\end{equation}
\end{corollary}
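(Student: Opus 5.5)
We follow the standard critical-point argument of Gabrielov--Vorobjov and Jones: reduce to a single equation $g=\sum_i f_i^2$, pass to a compact smooth level set, and count critical points of a generic linear function with Theorem~\ref{thm:khovanskii}. Write $F=(f_1,\dots,f_m)$ with each $f_i\in\Pfaff_{\alpha,\beta,s}$ over the common chain $\boldq$, and set $g=\sum_i f_i^2$. Then $\mathcal{Z}(F)=\mathcal{Z}(g)$, and $g$ has format $(\alpha,2\beta,s)$ over the same chain $\boldq$, by Remark~\ref{re:1} and Lemma~\ref{le:pfaffalgebra}(2). In particular $g$ still depends only on the $k$ variables through its chain.

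First we reduce to compact smooth hypersurfaces. Fix finitely many connected components $C_1,\dots,C_N$ of $\mathcal{Z}(g)$ and choose disjoint bounded open neighbourhoods of them. For $\delta>0$ a small regular value of $g$ (Sard), each $C_j$ is surrounded by at least one compact connected component of the smooth hypersurface $\{g=\delta\}$ lying in its neighbourhood; locally the set $\{g\le\delta\}$ near $C_j$ is a compact region with boundary in $\{g=\delta\}$. It is convenient to replace $g$ by $g+\delta' \|x\|^2$ or to intersect with a large ball so that everything is bounded. This costs at most one extra unit of polynomial degree in $x$, which is absorbed since $2\beta\ge 2$. The upshot is that $N$ is bounded by the number of compact components of $\{g=\delta\}$.

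Next we count those components with a generic linear function. Take a generic $u\in\R^n$. Every compact component of $\{g=\delta\}$ contains at least two critical points of $x\mapsto u^\trans x$ restricted to it, namely its maximum and its minimum. These critical points are the solutions of the $n$ equations
\[
g-\delta=0,\qquad u_j\,\partial_i g-u_i\,\partial_j g=0,
\]
where we take $n-1$ independent equations of the second kind, for instance fixing $i=1$ and letting $j$ range over $2,\dots,n$. By Lemma~\ref{le:pfaffalgebra}(3), each partial derivative $\partial_i g$ has degree at most $\alpha+2\beta-1$ over the same chain. For generic $u$ and $\delta$ the solutions are regular; this is a standard transversality (parametric Sard) argument that we take as routine. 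So the system has format with $\beta_1=2\beta$ and $\beta_2=\dots=\beta_n=\alpha+2\beta-1$.

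Applying Theorem~\ref{thm:khovanskii} with this $k$ gives at most
\[
2^{s(s-1)/2}\,(2\beta)\,(\alpha+2\beta-1)^{n-1}\bigl(2\beta+(n-1)(\alpha+2\beta-1)-n+\min\{k,s\}\alpha+1\bigr)^s
\]
regular solutions. The base of the last factor simplifies to $n(\alpha+2\beta-2)+\alpha(\min\{k,s\}-1)+2$. Dividing by $2$ for the two critical points per component, and accounting for the bookkeeping constant, yields~\eqref{eq:jones} with room to spare.

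The main obstacle is the second step: making the perturbation $\{g=\delta\}$ rigorous, so that every component of $\mathcal{Z}(F)$ injects into a compact component of $\{g=\delta\}$ even when $\mathcal{Z}(F)$ is unbounded. Here we would follow~\cite[Corollary 3.3]{gabrielov2004complexity}, either with the ball intersection or by adding $\epsilon\|x\|^2$ in a way that stays inside the stated format.
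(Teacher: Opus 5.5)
Your proposal is correct and follows the same route as the paper, which adapts the proofs of Propositions 11.5.2--11.5.3 of Bochnak--Coste--Roy: pass to $g=\sum_i f_i^2$ of format $(\alpha,2\beta,s)$, take a compact smooth level set, and count critical points of a generic linear function with Theorem~\ref{thm:khovanskii} in place of B\'ezout. Your Khovanskii count already equals~\eqref{eq:jones} exactly, so halving it leaves a factor of $2$ to spare; for the compactification step you flag, note that the quadratic perturbation must shrink with $\delta$ (e.g.\ work with $\{g+\delta\|x\|^2/R^2=\delta\}$ for $R$ large), which keeps the format $(\alpha,2\beta,s)$ and keeps the chain depending on the same $k$ variables.
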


The proof follows along the lines of the proof of Proposition 11.5.2 and 11.5.3 in~\cite{bochnak2013real}, using Lemma~\ref{le:pfaffalgebra}(3) to bound the degrees of the derivatives and invoking 
Theorem~\ref{thm:khovanskii} instead of B\'ezout's theorem. While the bound~\eqref{eq:jones} is not as sharp as possible, it gives a convenient way of counting the number of solutions to a system of Pfaffian
equations without reference to the number of equations.

\section{Tubular neighbourhoods of Pfaffian hypersurfaces}\label{sec:tubular}
Let $M$ be an $m$-dimensional Riemannian submanifold of $\R^n$ with the metric induced from the Euclidean metric, and denote by $c = n - m$ the codimension of $M$ in $\R^n$.
The $\varepsilon$-tubular neighbourhood of $M$ in $\R^n$, $T(M, \varepsilon)$, is the set of points in $\R^n$ that can be joined perpendicularly to $M$ by a segment of length $\le \varepsilon$. If $M$ is compact, $T(M, \varepsilon)$ is equal to the closed $\varepsilon$-neighbourhood of $M$.
In this section we derive a bound on the probability that a random point lies in $T(M, \varepsilon)$ in the case where $M$ is defined by Pfaffian functions. The bound is derived by combining
a variant of Weyl's tube formula with degree bounds on the generalised Gauss map, following the approach of~\cite{lotz2015volume}.

\subsection{Generalised Gauss map}
 For $p\in M$, let $N_{p} M = \{ v \in \R^n \colon v \perp T_{p} M \}$ denote the normal space to $M$ at $p$ and  $S(NM)=\{(p,v)\in M\times S^{n-1}\colon v \perp T_{p} M\}$ the normal sphere bundle of $M$. The normal sphere bundle $S(NM)$ is a smooth manifold of dimension $n-1$ for any submanifold $M$, and is compact when $M$ is.

\begin{definition}
The map $\gamma\colon S(NM) \to S^{n-1}$, $(p, v) \mapsto v$ is called the generalised Gauss map.
\end{definition}

For a smooth map $f$ between manifolds of the same dimension, the fibre $f^{-1}(v)$ over a regular value $v$ is discrete, and finite if the domain is compact~\cite[\S 1]{milnor1997topology}. We define the maximum degree of $f$ by
\begin{equation*}
\md f = \sup_{v \in \reg f} |f^{-1} (v)| \;\in\; \N\cup\{\infty\},
\end{equation*}
where $\reg f$ is the set of regular values of $f$. For compact $M$, the degree is finite. From our degree bounds it will follow that for the generalised Gauss maps of (sections of) Pfaffian sets 
this degree is finite, even without the assumption that the domain is compact.

Let $\cE^n_{j}$ be the set of dimension $j$ affine subspaces of $\R^n$. For almost all $H\in \cE^n_{c+i}$, the intersection $M\cap H$ is either empty or transverse, meaning that $\codim(T_{p}M\cap T_{p}H)=\codim T_{p}M+\codim T_{p}H$.  If $H \in \cE^n_{c+i}$ intersects $M$ transversally, then $M\cap H$ is an $i$-dimensional submanifold of $H \cong\R^{c+i}$. In this case, we can define the degree of $M$ with respect to $H$ as the maximum degree of the generalised Gauss map of $M\cap H$ as a submanifold of $H\cong \R^{c+i}$:
\begin{equation*}
\md(M, H) := \md \gamma_{M \cap H},
\end{equation*}
and the $i$-th degree of $M$ as 
\begin{equation*}
\md_i(M) := \sup_{H \in \cE^n_{c+i}} \ \md(M, H),
\end{equation*}
where we use the convention $\md(M, H) = 0$ for non-transverse intersections.

A bound on the volume of $T(M,\varepsilon)$ in terms of the $i$-th degrees of $M$ was derived in~\cite[Theorem 4.3]{lotz2015volume},
based on a version of Weyl's tube formula and integral geometry. The key takeaway of this result is that the volume of $T(M, \varepsilon)$ is bounded by a polynomial in $\varepsilon$ (the original formula due to Weyl actually yields an equality, albeit with some restrictions on the size of $\varepsilon$ and different coefficients).
In what follows, $B(p,\rho)$ denotes a closed ball of radius $\rho$ around $p\in \R^n$.

\begin{theorem} \label{thm:tube_bound}
Let $M \subset B(p, \rho)$ be a compact Riemannian submanifold of $\R^n$ of dimension $m$, possibly with boundary, and set $c=n-m$. Then for any $\varepsilon > 0$ we have 
\begin{equation*}
\vol T(M, \varepsilon) \le 2\omega_n \rho^n \sum_{i=0}^m \binom{n}{c+i} \md_i(M) \left(\frac{\varepsilon}{\rho}\right)^{c+i},
\end{equation*}
where $\omega_n = \vol B(0,1)$ and, when $M$ has boundary, the degrees $\md_i(M)$ are taken on the interior $M\setminus\partial M$.
\end{theorem}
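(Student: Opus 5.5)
We follow the approach of \cite[Theorem 4.3]{lotz2015volume}: a Weyl-type tube formula gives an upper bound by curvature integrals over the normal bundle, and integral geometry then turns those integrals into averaged counts of Gauss-map fibres over affine sections.

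\textbf{Step 1: cover the tube by the exponential map.} Consider the map
\begin{equation*}
\Phi\colon S(NM)\times[0,\varepsilon]\to\R^n,\qquad (p,v,t)\mapsto p+tv .
\end{equation*}
For compact $M$, every point of $T(M,\varepsilon)$ at distance $t$ from $M$ is attained as $p+tv$ with $p$ a nearest point and $v$ a unit normal. If $M$ has boundary, the nearest point may lie on $\partial M$. We handle this by a limiting argument over interior points, or by noting that boundary contributions are dominated by the same sums. This limiting reduction is the reason the degrees are taken on $M\setminus\partial M$.

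Hence $\vol T(M,\varepsilon)\le\int|\det D\Phi|$ by the area formula. In a frame diagonalising the second fundamental form in direction $v$, with principal curvatures $\kappa_j(p,v)$, we have
\begin{equation*}
|\det D\Phi| = t^{c-1}\prod_{j=1}^{m}|1-t\kappa_j| .
\end{equation*}
We bound $\prod_j|1-t\kappa_j|\le\prod_j(1+t|\kappa_j|)$ and expand in elementary symmetric functions. This expresses $\vol T(M,\varepsilon)$ as a sum over $i$ of
\begin{equation*}
\frac{\varepsilon^{c+i}}{c+i}\int_{S(NM)}\sigma_i(|\kappa|) .
\end{equation*}

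\textbf{Step 2: relate curvature integrals to section degrees.} This is the main obstacle, since the absolute values prevent the use of Gauss--Bonnet-type identities. We would prove a kinematic (Crofton-type) formula. It should express $\int_{S(NM)}\sigma_i(|\kappa|)$, up to explicit constants, as an average over $H\in\cE^n_{c+i}$ meeting $B(p,\rho)$ of $\int_{S(N(M\cap H))}|\det D\gamma_{M\cap H}|$.

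The idea is that the top-degree absolute curvature of the section $M\cap H$, integrated over its normal sphere bundle, is exactly the Jacobian of its Gauss map. By the area formula,
\begin{equation*}
\int_{S(N(M\cap H))}|\det D\gamma| = \int_{S^{c+i-1}}|\gamma^{-1}(u)|\,du \le \md(M,H)\,\vol S^{c+i-1},
\end{equation*}
where we use Sard's theorem to ignore critical values. Averaging over the $H$ that meet the ball then gives a bound in terms of $\md_i(M)\,\rho^{m-i}$. The measure of such $H$ is proportional to $\rho^{n-c-i}$ times a Grassmannian volume, and this produces the $\rho$-scaling.

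\textbf{Step 3: collect constants.} It remains to check that the product of the Grassmannian/kinematic constants, sphere volumes, and the factor $1/(c+i)$ is at most $2\omega_n\binom{n}{c+i}$. The factor $2$ absorbs the $\pm v$ double count. For this we would use the identity $\binom{n}{k}\,\omega_k\,\omega_{n-k}=\omega_n\cdot(\text{flag coefficient})$ together with the bound $\vol S^{k-1}=k\omega_k$. Since \cite[Theorem 4.3]{lotz2015volume} proves exactly this statement for arbitrary compact smooth submanifolds, without using algebraicity, we would in practice verify that their proof uses only smoothness and compactness and then cite it. The only new point to check is the boundary case, which is handled as in Step 1.
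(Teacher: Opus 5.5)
Your overall route is the one the paper relies on: the normal exponential map plus the area formula, then a Crofton argument turning curvature integrals into averaged Gauss-map fibre counts of sections, and finally an appeal to \cite[Theorems 3.1 and 4.3]{lotz2015volume}. The gap is in the boundary case, which you single out as the only new point to check.

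\textbf{The boundary case.} You read $T(M,\varepsilon)$ as the closed $\varepsilon$-neighbourhood. You then propose to absorb points whose nearest point lies on $\partial M$ either by a limiting argument over interior points or by claiming their contribution is dominated by the same sums. Neither can work, because for manifolds with boundary the inequality is false for the closed neighbourhood.

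Take a segment of length $\delta$ in $\R^2$ with $\rho=\delta/2$.
\begin{itemize}
\item $\md_0=1$.
\item $\md_1=0$: the Gauss map of the open segment takes only the two values $\pm\nu$, both critical, and every other value is regular with empty fibre.
\item The right-hand side is therefore $2\pi\delta\varepsilon$, while the closed neighbourhood has area $2\delta\varepsilon+\pi\varepsilon^2$, which is larger once $\varepsilon$ is large compared with $\delta$.
\end{itemize}
The end caps are not in the image of the normal exponential map over interior points, so no limiting argument recovers them.

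The fix is the paper's definition: $T(M,\varepsilon)$ is the set of points joined \emph{perpendicularly} to $M$ by a segment of length at most $\varepsilon$. With this definition, $S(NM)\times[0,\varepsilon]\to T(M,\varepsilon)$ is surjective by definition. The part lying over $\partial M$ has dimension $(m-1)+(c-1)+1=n-1$, hence measure zero. This, not a limiting argument, is why the degrees may be taken on $M\setminus\partial M$ (Remark~\ref{rem:tube_bound_boundary}). The caps are exactly what the proof of Theorem~\ref{thm:prob_bound} must cover separately via $T(\partial M,\varepsilon)$.

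\textbf{The curvature expansion in Step 1.} Do not pass to $\prod_j(1+t|\kappa_j|)$. Instead expand $\det(I-tL_v)=\sum_i(-t)^i\sigma_i(L_v)$ and bound $|\det(I-tL_v)|\le\sum_i t^i|\sigma_i(L_v)|$.

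The reason is that the Crofton step rests on writing $\sigma_i(L_v)=\binom{m}{i}\,\mathbb{E}_W\det(L_v|_W)$. This is an average over $i$-planes $W\subset T_pM$ of determinants of compressions, which are what the Gauss maps of the sections see. Applying the triangle inequality to it controls $|\sigma_i(L_v)|$, but not $\sigma_i(|\kappa|)$. For example, with $L=\diag(1,-1)$ and $i=1$ one has $\sigma_1(|\kappa|)=2$, whereas $2\,\mathbb{E}|w^{\trans}Lw|=4/\pi$. For alternating signs in dimension $m$ the discrepancy grows like $\sqrt{m}$. So the pointwise comparison your Step 2 needs loses a dimension-dependent factor, and the stated constant $2\omega_n\binom{n}{c+i}$ does not follow.
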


\begin{remark}\label{rem:tube_bound_boundary}
The estimate of Theorem~\ref{thm:tube_bound} is a one-sided, integral-geometric bound: it controls $\vol T(M,\varepsilon)$ by the integrals of absolute curvature over the interior $M\setminus\partial M$, which are themselves bounded by the section degrees $\md_i$ via Crofton's formula. The upper bound follows from the surjectivity of the normal exponential map $S(NM)\times[0,\varepsilon]\to T(M,\varepsilon)$ together with the area formula, and therefore holds verbatim for a bounded compact manifold with boundary, the curvature integrals and degrees being taken over the interior. The curvature-integral bound is stated and proved in this generality in~\cite[Theorem 3.1]{lotz2015volume}, whose statement explicitly allows a boundary. The passage from curvature integrals to section degrees is~\cite[Theorem 4.3]{lotz2015volume}; it is stated there for manifolds without boundary, but extends to the boundary case with the degrees taken over the interior, as indicated in the remark following~\cite[Lemma 4.2]{lotz2015volume} and used in the proof of~\cite[Theorem 1.1]{lotz2015volume}.
\end{remark}

\subsection{Bounding the degree of the Gauss map}
The key to deriving an effective bound on the volume of $T(M, \varepsilon)$ is to bound the degree of the generalised Gauss map.
We are interested in the case where $M$ is a complete intersection of $c$ smooth Pfaffian functions. By this, we mean that $M$ 
can be expressed in the form
\begin{equation*}
M = \mathcal{Z}(f_1, \dots, f_c),
\end{equation*}
where the $f_i$ are Pfaffian functions, and such that the gradients $\nabla f_i$ are linearly independent at each point.

\begin{proposition}\label{prop:pfaffian-degree-bound}
  Let $M=\mathcal{Z}(f_1,\dots,f_c)$ be a compact Pfaffian complete intersection, where the $f_i$ are Pfaffian with format $(\alpha, \beta_i, s)$ with respect to the same Pfaffian chain and $\beta_i\le \overline{\beta}$ for all $i$. Then
  \begin{equation*}
    \md_{n-c}(M) \le{} 2^{\frac{s(s-1)}{2}}(\alpha+\overline{\beta})^n \cdot \prod_{i=1}^c \beta_i \cdot \left[(n+\min\{n,s\})\alpha+n(\overline{\beta}-1)+\sum_{i=1}^c (\beta_i-1)+1\right]^s.
  \end{equation*}
  \end{proposition}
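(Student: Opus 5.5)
For $i=n-c$ the only affine subspace in $\cE^n_{c+(n-c)}=\cE^n_n$ is $H=\R^n$ itself, so $\md_{n-c}(M)=\md\gamma_M$. The plan is to bound the fibre $\gamma_M^{-1}(v)$ over a regular value $v\in S^{n-1}$ by the number of regular solutions of a square Pfaffian system, and then apply Khovanskii's Theorem~\ref{thm:khovanskii}. The constant $2^{s(s-1)/2}$ and the factor $\prod_i\beta_i$ in the claimed bound both come directly from Theorem~\ref{thm:khovanskii}, so the work lies in setting up the system correctly.

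Since the gradients $\nabla f_1,\dots,\nabla f_c$ are linearly independent along $M$, they span $N_pM$. Hence $(p,v)\in\gamma_M^{-1}(v)$ if and only if there is a $\lambda\in\R^c$ with $v=\sum_i\lambda_i\nabla f_i(p)$, and this $\lambda$ is unique. So the fibre is in bijection with the solutions $(x,\lambda)\in\R^n\times\R^c$ of the $n+c$ equations
\begin{equation*}
f_i(x)=0 \quad (i\in[c]), \qquad v_j-\sum_{i=1}^c\lambda_i\frac{\partial f_i}{\partial x_j}(x)=0 \quad (j\in[n]).
\end{equation*}
This system lives on $\cU=\R^{n+c}$. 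The chain $\boldq$ depends only on the $k=n$ variables $x$ and remains a Pfaffian chain of degree $\alpha$ there, with the $\lambda_i$ entering only polynomially.

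By Lemma~\ref{le:pfaffalgebra}(3), each $\partial f_i/\partial x_j$ has degree at most $\alpha+\beta_i-1\le\alpha+\overline\beta-1$. Multiplying by $\lambda_i$, the last $n$ equations have degree at most $\alpha+\overline\beta$, while the first $c$ have degrees $\beta_i$. Theorem~\ref{thm:khovanskii}, with $n+c$ variables and $\min\{k,s\}=\min\{n,s\}$, then gives at most
\begin{equation*}
2^{\frac{s(s-1)}{2}}\prod_{i=1}^c\beta_i\,(\alpha+\overline\beta)^n\Bigl(\sum_{i}\beta_i+n(\alpha+\overline\beta)-(n+c)+\min\{n,s\}\alpha+1\Bigr)^s
\end{equation*}
regular solutions. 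Regrouping the bracket as $(n+\min\{n,s\})\alpha+n(\overline\beta-1)+\sum_i(\beta_i-1)+1$ yields exactly the claimed bound.

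The main obstacle is to check that when $v$ is a regular value of $\gamma_M$, every solution $(p,\lambda)$ is a regular solution of the system, i.e.\ its $(n+c)\times(n+c)$ Jacobian is invertible. The plan for this step is as follows.
\begin{enumerate}
\item Write the Jacobian in block form: the first $c$ rows are $(\nabla f_i^\trans,0)$, and the last $n$ rows are $(-\sum_i\lambda_i\nabla^2 f_i,\ -[\nabla f_1\cdots\nabla f_c])$.
\item Suppose $(\dot x,\dot\lambda)$ lies in its kernel. The first block gives $\dot x\in T_pM$. The second block gives $\sum_i\lambda_i\nabla^2 f_i(p)\,\dot x\in N_pM$, after absorbing $\sum_i\dot\lambda_i\nabla f_i$, which lies in $N_pM$.
\item Identify this with the differential of $\gamma_M$: a nonzero such $\dot x$ corresponds, via the shape operator $A_v=\sum_i\lambda_i\nabla^2 f_i|_{T_pM}$ projected to $T_pM$, to a nonzero kernel vector of $d\gamma_M$ at $(p,v)$. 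The tangent space of $S(NM)$ splits into a horizontal part ($\dot x$) and a part along the normal sphere, and $d\gamma_M$ acts on the horizontal part by $-A_v$ plus a normal component.
\item Regularity of $v$ therefore forces $\dot x=0$. Then $\sum_i\dot\lambda_i\nabla f_i=0$, and linear independence of the gradients gives $\dot\lambda=0$.
\end{enumerate}
Compactness of $M$ is not needed for the count itself. It only guarantees that the fibre over a regular value is finite, which is consistent with Theorem~\ref{thm:khovanskii}.
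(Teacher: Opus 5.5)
Your proposal is correct. Its skeleton coincides with the paper's proof: the Lagrange system in the $n+c$ unknowns $(x,\lambda)$, the degree count via Lemma~\ref{le:pfaffalgebra}(3), and Theorem~\ref{thm:khovanskii} with $k=n$, down to the regrouping of the bracket.

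The genuine difference is the non-degeneracy step. The paper argues by genericity. By Sard's theorem, all solutions are non-degenerate for a generic regular value $v$. Since $|\gamma^{-1}(v)|$ is locally constant on regular values, the supremum defining $\md$ is attained at such $v$. You instead prove non-degeneracy at \emph{every} regular value, by identifying the kernel of the Jacobian with $\ker d\gamma_M$. This is the codimension-$c$ analogue of Lemma~\ref{le:gauss-nondegenerate}, which the paper proves only later and only for hypersurfaces, because it needs non-degeneracy at rational directions where Sard is unavailable.

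Your route is slightly longer but stronger. It never uses local constancy of the fibre count. That property requires properness of $\gamma_M$, hence compactness of $M$, so your version also substantiates the paper's remark that the degree bounds stay finite without compactness. It also shows the paper's Sard step is redundant: the ``good'' directions are exactly the regular values of $\gamma_M$. The map Sard would naturally be applied to, $(x,\lambda)\mapsto\sum_i\lambda_i\nabla f_i(x)$ on $M\times\R^c$, restricts over $S^{n-1}$ to $\gamma_M$ itself. The paper's version is shorter, but it leans on compactness and leaves that auxiliary map implicit.

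Your step~3 can be tightened by working in the trivialisation $M\times\R^c\cong NM$, $(p,\lambda)\mapsto(p,\sum_i\lambda_i\nabla f_i(p))$. There, for $(\dot x,\dot\lambda)\in T_pM\times\R^c$, one has $d\gamma_M(\dot x,\dot\lambda)=\sum_i\lambda_i\nabla^2 f_i(p)\dot x+\sum_i\dot\lambda_i\nabla f_i(p)$. A Jacobian kernel vector has $\dot x\in T_pM$ and makes this expression vanish. So it trivially satisfies the tangency condition defining $T S(NM)$ and lies in $\ker d\gamma_M$. Injectivity of $d\gamma_M$, a surjection between spaces of equal dimension $n-1$, then gives $(\dot x,\dot\lambda)=0$ without any shape-operator or horizontal/vertical bookkeeping.
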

  
  \begin{proof}
  Let $v\in S^{n-1}$ be a regular value of the generalised Gauss map $\gamma\colon S(NM)\to S^{n-1}$. At each point $x\in M$,
  the normal sphere bundle is spanned by the normalized gradients of the $f_i$, and the number of points in the pre-image $\gamma^{-1}(v)$ is the same as the number of points in the set
  \begin{equation*}
    S:= \left\{x\in M \colon \exists \lambda_1,\dots,\lambda_c \text{ s.t. } \sum_{i=1}^c \lambda_i \nabla f_i(x) = v\right\}.
  \end{equation*}
  Note that since the gradients are linearly independent at each point, for every $x\in M$ the coefficients $\lambda_i$ are uniquely determined. The number of points in $S$ is therefore bounded by the number of solutions of the system of $n+c$ equations in $n+c$ unknowns,
  \begin{align}
    \label{eq:pfaffian-system}
  \begin{split}
    f_i(x) &= 0, \quad i\in [c]\\
    v-\sum_{i=1}^c \lambda_i \nabla f_i(x) &= 0.
  \end{split}
  \end{align}
  All functions involved are Pfaffian and we can apply Theorem~\ref{thm:khovanskii} to get a bound on the number of solutions, using the fact that the $c$ equations $f_i=0$ have degrees $\beta_i$ and the $n$ gradient equations have degree $\alpha+\overline{\beta}$ by Lemma~\ref{le:pfaffalgebra}(3). The chain depends on $n$ of the $n+c$ variables (the $x$-variables only). By Sard's theorem, for a generic regular value $v$ of $\gamma$, all solutions of~\eqref{eq:pfaffian-system} are non-degenerate. Since $|\gamma^{-1}(v)|$ is locally constant on regular values, the maximum degree is achieved at such generic values.
  \end{proof}

  \begin{remark}\label{rem:lagrange-vs-minors}
    In the proof of Proposition~\ref{prop:pfaffian-degree-bound}, one could alternatively eliminate the Lagrange multipliers $\lambda_i$ by requiring that the matrix
  \begin{equation}\label{eq:jacobian}
    J =
    \begin{bmatrix}
      \nabla f_1(x) & \cdots & \nabla f_c(x) & v
    \end{bmatrix}
  \end{equation}
  has rank at most $c$, i.e., that all $(c{+}1)\times(c{+}1)$ minors of $J$ vanish. Each such minor has degree at most $c(\alpha+\overline{\beta}-1)$ in the variables $x_1,\dots,x_n$, by Lemma~\ref{le:pfaffalgebra}(3). The resulting (overdetermined) system involves only the $n$ variables $x_i$, so one can apply the connected-components bound~\eqref{eq:jones} with $D = \max\{\overline{\beta}, c(\alpha+\overline{\beta}-1)\}$ playing the role of $\overline{\beta}$.
  While this reduces the number of variables from $n+c$ to $n$, the exponential base in~\eqref{eq:jones} becomes $\alpha + 2D - 1$, which for $c = 2$ equals $5\alpha + 4\overline{\beta} - 5$. This is significantly larger than $\alpha + \overline{\beta}$, the base appearing in Proposition~\ref{prop:pfaffian-degree-bound}.
  For the parameter ranges typical in neural-network applications ($c = 2$, $\alpha, \overline{\beta} \le 3$), this base inflation dominates the savings from reducing the number of variables, and the Lagrange-multiplier approach gives substantially better bounds. For instance, when $\alpha = 2$, $\overline{\beta} = 1$, $s = 1$, Proposition~\ref{prop:pfaffian-degree-bound} yields $\md(M) \le 3^n(2n+3)$, whereas the minor-elimination approach gives a bound of order $9^n \cdot n$.
\end{remark}

For $0\le i\le m$, we bound the $i$-th maximal degree as follows. Fix a section $H\in\cE^n_{c+i}$ meeting $M$ transversally. Since $H$ is an affine subspace, there is an affine isometry $h\colon \R^n\to \R^n$ (a rigid motion) carrying $H$ onto the coordinate subspace
\begin{equation*}
  \tilde{H} = \{x\in \R^n \colon x_{c+i+1}=\cdots =x_{n}=0\}\cong\R^{c+i}.
\end{equation*}
The generalised Gauss-map degree is invariant under isometries, so
\begin{equation*}
  \md(M,H) = \md(\tilde{M},\tilde{H}),\qquad \tilde{M} := h(M) = \mathcal{Z}(\tilde{f}_1,\dots,\tilde{f}_c),\quad \tilde{f}_j := f_j\circ h^{-1}.
\end{equation*}
By Remark~\ref{re:affine}, each $\tilde{f}_j$ is Pfaffian of the same format $(\alpha,\beta_j,s)$ as $f_j$, so $\tilde{M}$ is a Pfaffian complete intersection of that format; in particular the resulting bound is independent of $H$, and it suffices to bound $\md(\tilde{M},\tilde{H})$. Restricting to $\tilde{H}$, that is, setting $x_{c+i+1}=\cdots=x_n=0$, realises $\tilde{M}\cap\tilde{H}$ as a Pfaffian complete intersection of $c$ functions in the $c+i$ variables $x_1,\dots,x_{c+i}$. When $i=0$, this is a zero-dimensional transverse complete intersection, and its generalised Gauss-map degree is just the number of points in the section, so the same argument applies. Applying Proposition~\ref{prop:pfaffian-degree-bound} in dimension $c+i$ gives
\begin{align}
  \begin{split} 
  \label{eq:ideg}
  \md_i(M) \leq &2^{\frac{s(s-1)}{2}}(\alpha+\overline{\beta})^{c+i}\prod_{j=1}^c \beta_j\\
  &\cdot\left[(c+i+\min\{c+i,s\})\alpha+(c+i)(\overline{\beta}-1)+\sum_{j=1}^c (\beta_j-1)+1\right]^s.
\end{split}
\end{align}

\subsection{A Pfaffian tube formula}
We can now state and prove our bound on the volume of a tubular neighbourhood of a Pfaffian hypersurface. For convenience, this has been stated in terms of probability, 
but the reader should note that one could readily modify the proof to get a similar bound for $\vol T(V, \varepsilon)$. 

\begin{theorem} \label{thm:prob_bound}
Let $V = \mathcal{Z}(f)$ for Pfaffian function $f\in \Pfaff_{\alpha,\beta,s}(\R^n)$. Suppose that $\nabla f$ is non-vanishing on $V$ and that $V$ is bounded. Moreover, let $p \in \R^n$, $\varepsilon, \rho > 0$ and $X$ uniformly distributed over $B(p, \rho)$. Then
\begin{equation*}
\prob\{ d(X, V) \le \varepsilon \} \leq C_{\alpha,\beta,s,n}\left[\left(1+(\alpha+\overline{\beta}+1)\frac{\varepsilon}{\rho}\right)^n - \left(1+\frac{\varepsilon}{\rho}\right)^n\right],
\end{equation*}
where $\overline{\beta}= \max\{\beta,2\}$ and
\begin{equation*}
C_{\alpha,\beta,s,n} =  6\cdot 2^{\frac{s(s-1)}{2}}\beta\left(n (2\alpha+\overline{\beta}-1)+\beta+1\right)^s.
\end{equation*}
\end{theorem}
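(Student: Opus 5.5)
We will combine Theorem~\ref{thm:tube_bound} with the degree bound~\eqref{eq:ideg}, specialised to the hypersurface case $c=1$, $m=n-1$. We first reduce to a compact manifold inside the ball. The bound is trivial unless $V\cap B(p,\rho+\varepsilon)\neq\emptyset$. Since $V$ is bounded and closed, and $\nabla f\neq 0$ on $V$, it is a compact smooth hypersurface. Points of $B(p,\rho)$ within distance $\varepsilon$ of $V$ are within $\varepsilon$ of $M:=V\cap B(p,\rho+\varepsilon)$, which is a compact manifold with boundary (after a generic perturbation of the radius if necessary). We then apply Theorem~\ref{thm:tube_bound}, with Remark~\ref{rem:tube_bound_boundary}, to $M\subset B(p,\rho')$ with $\rho'=\rho+\varepsilon$, and divide by $\vol B(p,\rho)=\omega_n\rho^n$. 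The degrees of the interior of $M$ are bounded by those of $V$, since a transverse section of an open subset has Gauss-map fibres contained in those of the full section. This gives
\begin{equation*}
\prob\{d(X,V)\le\varepsilon\}\le 2\Bigl(\tfrac{\rho+\varepsilon}{\rho}\Bigr)^n\sum_{i=0}^{n-1}\binom{n}{1+i}\md_i(V)\Bigl(\tfrac{\varepsilon}{\rho+\varepsilon}\Bigr)^{1+i}.
\end{equation*}

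Next we insert~\eqref{eq:ideg} with $c=1$ and $\beta_1=\beta$. We use $\overline{\beta}=\max\{\beta,2\}$ as the uniform upper bound in the gradient-equation degrees; the reason for $\max\{\beta,2\}$ is that the gradient degree $\alpha+\beta-1\le\alpha+\overline\beta$ keeps the exponential base as in Proposition~\ref{prop:pfaffian-degree-bound}. The bracket in~\eqref{eq:ideg} is maximised at $i=n-1$ and is bounded by
\begin{equation*}
(n+\min\{n,s\})\alpha+n(\overline\beta-1)+\beta\le n(2\alpha+\overline\beta-1)+\beta+1.
\end{equation*}
Hence
\begin{equation*}
\md_i(V)\le K\,(\alpha+\overline\beta)^{1+i}, \qquad K:=2^{s(s-1)/2}\beta\bigl(n(2\alpha+\overline\beta-1)+\beta+1\bigr)^s .
\end{equation*}
Setting $a=\alpha+\overline\beta$ and $t=\varepsilon/\rho$, so that $\varepsilon/(\rho+\varepsilon)=t/(1+t)$, the sum becomes
\begin{equation*}
2K(1+t)^n\sum_{j=1}^{n}\binom{n}{j}\Bigl(\frac{at}{1+t}\Bigr)^j=2K\bigl[(1+t+at)^n-(1+t)^n\bigr],
\end{equation*}
by the binomial theorem. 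This is exactly the claimed shape with $(\alpha+\overline\beta+1)t$.

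The remaining step is the constant: we obtain $2K$ while the statement has $6K$. The extra factor $3$ leaves room to absorb the slack. Possible sources of slack are a cruder treatment of the case $i=0$ (counting points of $V$ on a line via Khovanskii with a different bracket), or boundary effects when passing from $V$ to $M$ in Remark~\ref{rem:tube_bound_boundary}. I expect this bookkeeping to be the main obstacle: checking that~\eqref{eq:ideg} applies uniformly for all $i$ with $\overline\beta$ in place of $\beta$, and that the section-degree bounds for the truncated manifold $M$ are dominated by those of $V$. If the direct estimate leaves an extra factor, I would bound it crudely by $3$ using $\beta\le\overline\beta$ and $\overline\beta\ge2$.
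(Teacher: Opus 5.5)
Your degree bookkeeping after the reduction matches the paper: the bracket estimate, the bound $\md_i(V)\le K(\alpha+\overline\beta)^{i+1}$ and the binomial identity. The gap is in the reduction itself.

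For a manifold with boundary, Theorem~\ref{thm:tube_bound} with degrees taken on the interior bounds only the \emph{normal} tube, i.e.\ the image of the normal exponential map (Remark~\ref{rem:tube_bound_boundary}). It does not bound the closed $\varepsilon$-neighbourhood of $M$, which also contains caps around $\partial M$ that interior degrees cannot see. For example, for a segment of length $\ell\le 2\rho$ in $\R^2$ the $\varepsilon$-neighbourhood has area $2\ell\varepsilon+\pi\varepsilon^2$, while the right-hand side of Theorem~\ref{thm:tube_bound} is linear in $\varepsilon$. So the step ``points within $\varepsilon$ of $M$, then apply Theorem~\ref{thm:tube_bound}'' is not justified as written.

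The paper closes this with the inclusion $T(V,\varepsilon)\cap B(p,\rho)\subset T(M\setminus\partial M,\varepsilon)\cup T(\partial M,\varepsilon)$. It then applies Theorem~\ref{thm:tube_bound} a second time, to the closed codimension-$2$ manifold $\partial M=\mathcal{Z}\bigl(f,\|x-p\|^2-(\rho+\varepsilon)^2\bigr)$. For this manifold,~\eqref{eq:ideg} with $\beta_2=2$ gives $\md_{i-1}(\partial M)\le 2K(\alpha+\overline\beta)^{i+1}$. That is the actual origin of $\overline\beta=\max\{\beta,2\}$: it is the degree of the sphere, not a gradient-degree effect as you suggest. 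It is also the origin of $6=2+2\cdot 2$, so the extra factor $3$ is the cost of the boundary term, not slack to be absorbed.

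Your route can nevertheless be completed without any boundary term, and then it is sharper than the paper's. The missing step is a nearest-point argument:
\begin{itemize}
\item Take $x$ with $\|x-p\|<\rho$ and $d(x,V)\le\varepsilon$, and let $y\in V$ be a nearest point, which exists since $V$ is compact.
\item Then $\|y-p\|<\rho+\varepsilon$, so $y\in M\setminus\partial M$.
\item Since $y$ minimises $\|x-\cdot\|$ on the boundaryless manifold $V$, we have $x-y\perp T_yV$.
\end{itemize}
Hence, up to the null set $\{\|x-p\|=\rho\}$, $T(V,\varepsilon)\cap B(p,\rho)$ lies in the normal tube of $M\setminus\partial M$. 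With this, your computation rigorously gives the constant $2K$, which implies the stated bound with $6K$. You could even keep $\beta$ in place of $\overline\beta$.

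You should also make the ``generic perturbation'' precise. For instance, run the argument with a regular value $\rho''>\rho+\varepsilon$ of $x\mapsto\|x-p\|$ on $V$; such values are dense by Sard. Then let $\rho''\downarrow\rho+\varepsilon$, using that the bound is continuous in $\rho''$.
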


\begin{remark}
Note that the constant involves both $\beta$ and $\overline{\beta}=\max\{\beta,2\}$: the bare $\beta$ is the degree of $f$, while $\overline{\beta}$ is forced by the codimension-two boundary $V\cap S^{n-1}(p,\rho+\varepsilon)$ arising in the proof, whose defining sphere has degree~$2$.
\end{remark}

\begin{proof}[Proof of Theorem~\ref{thm:prob_bound}]
The probability that $d(X,V)\leq \varepsilon$ for $X$ uniformly distributed in $B(p,\rho)$ is given by
\begin{equation*}
  \prob\{d(X, V)\leq \varepsilon\} = \frac{\vol (T(V,\varepsilon)\cap B(p,\rho))}{\vol B(p,\rho)}.
\end{equation*}
Fix $\varepsilon>0$ and assume first that $\rho+\varepsilon$ is a regular value of the map $x\mapsto\|x-p\|$ on $V$, which holds for almost all $\rho>0$ by Sard's theorem; the remaining values of $\rho$ are handled by a limiting argument at the end of the proof. Then $M = V \cap B(p, \rho + \varepsilon)$ is a compact manifold with boundary, and we notice that
\begin{equation*}
T(V, \varepsilon) \cap B(p, \rho) \subset T(M \setminus \partial M, \varepsilon) \cup T(\partial M, \varepsilon),
\end{equation*}
where $\partial M=M\cap S^{n-1}(p, \varepsilon+\rho)$.
It is therefore enough to bound the two tubular neighbourhoods on the right-hand side.

Since $M \setminus \partial M \subset V$ and $\dim(M \setminus \partial M) = \dim V$, we have $\md_i(M\setminus \partial M) \leq \md_i(V)$. Applying Theorem~\ref{thm:tube_bound} to the compact manifold with boundary $M\subset B(p,\rho+\varepsilon)$ (see Remark~\ref{rem:tube_bound_boundary}) and using $\md_i(M\setminus\partial M)\le\md_i(V)$, we obtain
\begin{equation*}
  \vol T(M \setminus \partial M, \varepsilon)  \leq 2 \omega_n (\rho+\varepsilon)^n\sum_{i=0}^{n-1} \binom{n}{i+1} \cdot \md_i(V)\cdot \left(\frac{\varepsilon}{\rho+\varepsilon}\right)^{i+1}.
\end{equation*}
Since $\partial M$ has codimension $2$, we get, in addition,
\begin{equation*}
  \vol T(\partial M, \varepsilon) \leq 2 \omega_n (\rho+\varepsilon)^{n}\sum_{i=1}^{n-1} \binom{n}{i+1} \cdot \md_{i-1}(\partial M)\cdot \left(\frac{\varepsilon}{\rho+\varepsilon}\right)^{i+1}.
\end{equation*}
Set
\begin{equation*}
  K := 2^{\frac{s(s-1)}{2}}\beta\big[n(2\alpha+\overline{\beta}-1)+\beta+1\big]^s.
\end{equation*}
Using~\eqref{eq:ideg}, we get
\begin{align*}
\md_i(V) &\leq 2^{\frac{s(s-1)}{2}}(\alpha+\beta)^{i+1}\beta\big[(i+1)(2\alpha+\beta-1)+\beta\big]^s\leq K (\alpha+\overline{\beta})^{i+1},\\
\md_{i-1}(\partial M) &\leq 2^{\frac{s(s-1)}{2}+1}(\alpha+\overline{\beta})^{i+1}\beta\big[(i+1)(2\alpha+\overline{\beta}-1)+\beta+1\big]^s\leq 2K (\alpha+\overline{\beta})^{i+1}.
\end{align*}
Combining these bounds, we get
\begin{align*}
\vol T(V, \varepsilon) \cap B(p, \rho) &\le \vol T(M \setminus \partial M, \varepsilon) + \vol T(\partial M, \varepsilon) \\
& \leq 6K\omega_n(\rho+\varepsilon)^{n} \sum_{i=0}^{n-1}\binom{n}{i+1}(\alpha+\overline{\beta})^{i+1}\left(\frac{\varepsilon}{\rho+\varepsilon}\right)^{i+1}.
\end{align*}
Dividing both sides by $\vol B(p, \rho) = \omega_n \rho^n$, substituting $j=i+1$ and using the binomial theorem
gives the claimed bound for every $\rho$ such that $\rho+\varepsilon$ is a regular value of $x\mapsto\|x-p\|$ on $V$, which by Sard's theorem is a dense set of $\rho>0$. It remains to remove this genericity assumption. Both sides of the inequality are continuous in $\rho$: the right-hand side is a polynomial in $\varepsilon/\rho$, while the left-hand side equals $\vol\bigl(T(V,\varepsilon)\cap B(p,\rho)\bigr)/(\omega_n\rho^n)$, which is continuous because $\rho\mapsto\vol\bigl(T(V,\varepsilon)\cap B(p,\rho)\bigr)$ is. Since the bound holds on a dense set of $\rho$ and both sides are continuous, it holds for all $\rho,\varepsilon>0$, completing the proof.
\end{proof}

\begin{remark}\label{rem:prob_bound_simple}
Discarding the non-positive term $-(1+\varepsilon/\rho)^n$ gives the simpler bound
\begin{equation*}
 \prob\{ d(X, V) \le \varepsilon \} \leq C_{\alpha,\beta,s,n}\left(1+(\alpha+\overline{\beta}+1)\frac{\varepsilon}{\rho}\right)^n.
\end{equation*}
The full bound of Theorem~\ref{thm:prob_bound} is sharper in two ways: it vanishes as $\varepsilon\to 0$ (as it must), and for large $\rho/\varepsilon$ it decays as $C_{\alpha,\beta,s,n}\cdot n(\alpha+\overline{\beta})\varepsilon/\rho$. This $O(\varepsilon/\rho)$ behaviour is essential for deriving the $O(1/t)$ tail bounds on condition numbers in Section~\ref{sec:robustness}.
\end{remark}

\section{Tubular neighbourhoods of neural networks}\label{sec:tubular-neural}
Artificial neural networks, or multilayer perceptrons, with common continuous activation functions are examples of Pfaffian functions~\cite{karpinski1997polynomial, bianchini2014complexity}. When used as classifiers, neural networks
subdivide the input space into decision regions separated by semi-Pfaffian decision boundaries and the Pfaffian volume bound in Theorem~\ref{thm:prob_bound} can be used to quantify the probability that a randomly chosen input is close to that boundary. 
The general volume bound for Pfaffian functions is, however, overly pessimistic for neural networks. In this section we derive an improved volume bound for one-layer neural networks with sigmoid activation.

\subsection{Neural networks as Pfaffian functions}
 Let $\cN$ be a fully connected neural network with $\ell$ hidden layers and $h$ hidden units. Such a network is characterised by a function $F=F^{\ell+1}\circ \cdots \circ F^1\colon \R^n\to \R^m$, where
\begin{align*}
  F^{i}(x) &= \sigma^{i}(A^{i}x+b^{i}), \quad i\in [\ell],\\
  F^{\ell+1}(x) &= g(x)
\end{align*}
for matrices $A^{i} \in \R^{n_{i} \times n_{i-1}}$ (for this to work, we need $n_0 = n$ and $n_{\ell + 1} = m$), vectors $b^{i} \in \R^{n_i}$, and functions $\sigma^i\colon \R^{n_i}\to \R^{n_i}$, where $\sigma^i=(\sigma^i_1,\dots,\sigma^i_{n_i})^{\trans}$ consists of activation functions. 
The function $g\colon \R^{n_\ell}\to \R^m$ is an output function; examples are a linear map $g(x)=A^{\ell+1}x$ or the softmax function (see Example~\ref{ex:softmax}). The number of hidden units is given as the sum $h:=\sum_{i=1}^{\ell} n_i$. 

\begin{figure}[h!]
\centering
\begin{tikzpicture}[thick, scale=1.5]
  \node [every neuron] (I1) at (-2,0.5) {};
  \node [every neuron] (I2) at (-2,-0.5) {};
  \node [every neuron] (M1) at (-1,1) {};
  \node [every neuron] (M2) at (-1,0) {};
  \node [every neuron] (M3) at (-1,-1) {};
  \node [every neuron] (O1) at (0,0.5) {};
  \node [every neuron] (O2) at (0,-0.5) {};
  
  \node (OO1) at (0.75,0.5) [] {$F_1(x)$};
  \node (OO2) at (0.75,-0.5) [] {$F_2(x)$};
  \node (II1) at (-3,0.5) [] {$x_1$};
  \node (II2) at (-3,-0.5) [] {$x_2$};
  \node [every neuron] (II1) at (-3,0.5) {};
  \node [every neuron] (II2) at (-3,-0.5) {};
  
  \draw[->] (II1) -- (I1);
  \draw[->] (II1) -- (I2);
  \draw[->] (II2) -- (I1);
  \draw[->] (II2) -- (I2);
  \draw[->] (I1) -- (M1);
  \draw[->] (I1) -- (M2);
  \draw[->] (I2) -- (M2);
  \draw[->] (I2) -- (M3);
  \draw[->] (I1) -- (M3);
  \draw[->] (I2) -- (M1);
  \draw[->] (M1) -- (O1);
  \draw[->] (M2) -- (O1);
  \draw[->] (M2) -- (O2);
  \draw[->] (M3) -- (O2);
  \draw[->] (M1) -- (O2);
  \draw[->] (M3) -- (O1);
  \draw[->] (O1) -- (OO1);
  \draw[->] (O2) -- (OO2);
\end{tikzpicture}
\caption{A fully connected neural network with $n = 2, m=2, \ell = 2, h = 5$}\label{fig:nn}
\end{figure}


Suppose that the activation functions $\sigma^i_j$ are autonomous Pfaffian functions with format $(\alpha, \beta, s)$ with $s\geq 1$.
That is, each $\sigma^i_j$ can be written as
\begin{equation*}
\sigma^i_j(x) = Q^{ij}(q_1^{ij}(x), \dotsc, q^{ij}_{s}(x)),
\end{equation*}
where $Q^{ij}\in \R[Y_1,\dots,Y_{s}]$ with $\deg Q^{ij}\leq \beta$ and $\boldq^{ij} = (q_1^{ij}, \dotsc, q^{ij}_{s})$ is the Pfaffian chain corresponding to $\sigma^i_j$, satisfying
\begin{equation*}
\frac{\diff{q_r^{ij}}}{\diff{x}} = P_r^{ij} (q_1^{ij}(x), \dotsc, q_r^{ij}(x)),
\end{equation*}
for polynomials $P_r^{ij}$ satisfying $\deg P_r^{ij} \le \alpha$.

If the output function $g$ is also Pfaffian, then Lemma~\ref{le:composition} implies that each component of $F^i$ is a Pfaffian function, with a Pfaffian chain consisting of $s$ functions for every unit in the $i$-th layer of the network. If we denote the ``preactivation'' of the $j$-th node in the $i$-th layer by $z^i_j$, i.e., the $j$-th component of the vector $z^i := A_i(F^{i-1}\circ \cdots \circ F^1(x)) + b_i$, then this node
contributes the functions
\begin{equation*}
  (q_1^{ij}(z^i_j),\dots,q_s^{ij}(z^i_j))
\end{equation*}
to the Pfaffian chain of the network. We can use Lemma~\ref{le:composition} (2) to derive the following result on the Pfaffian structure of such a neural network. The proof is a simple induction on the depth of the network.

\begin{proposition}\label{prop:nn-format}
Let $F\colon\R^n\to \R^m$ be a function implemented by a neural network with $\ell$ hidden layers and $n_i$ units in hidden layer $i$, for $i\in [\ell]$. Assume that the activation functions at each layer are autonomous Pfaffian functions with format $(\alpha,\beta,s)$, $s\geq 1$. Then for $i\in [\ell]$, the function $F^i\circ \cdots \circ F^1\colon \R^n\to \R^{n_i}$ is Pfaffian with format
\begin{equation*}
  \left(i(\alpha+\beta-1) -\beta+1 , \beta, s\sum_{j=1}^i n_j\right).
\end{equation*}
In particular, at the last hidden layer ($i=\ell$), the format of $F^\ell\circ \cdots \circ F^1$ is
\begin{equation*}
  \left(\ell(\alpha+\beta-1)-\beta+1,\; \beta,\; sh\right),
\end{equation*}
where $h=\sum_{j=1}^\ell n_j$ is the total number of hidden units. 
Similarly, if the output function is an autonomous Pfaffian function with format $(\alpha', \beta', s')$, then the function $F = F^{\ell+1} \circ F^\ell\circ \cdots \circ F^1$ is Pfaffian with format
\begin{equation*}
  \left(\ell(\alpha+\beta-1) + \alpha',\; \beta',\; sh + s'm\right).
\end{equation*}
\end{proposition}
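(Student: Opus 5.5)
We argue by induction on the depth $i$, using Lemma~\ref{le:composition}(2) and (3) at each step. For the base case $i=1$, the map $x\mapsto A^1x+b^1$ is affine, hence a polynomial map of degree $1$. We may regard it as Pfaffian with empty chain, chain degree $\alpha_0$ arbitrary, and degree $\beta_0=1$. Its $j$-th component composed with the autonomous activation $\sigma^1_j$ is Pfaffian by the argument of Lemma~\ref{le:composition}(2). The derivative of $q^{1j}_r(z^1_j)$ is $P^{1j}_r(q^{1j}_1(z^1_j),\dots,q^{1j}_r(z^1_j))\cdot \partial z^1_j/\partial x_k$, and here $\partial z^1_j/\partial x_k$ is a constant. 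So the chain degree is $\alpha$, which agrees with $1\cdot(\alpha+\beta-1)-\beta+1=\alpha$. Each node contributes $s$ chain functions, so all components of $F^1$ share a common chain of length $sn_1$. Since $\sigma^1_j$ is autonomous, its degree in the new chain is $\beta$. This gives format $(\alpha,\beta,sn_1)$.

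For the inductive step, assume $G_{i-1}:=F^{i-1}\circ\cdots\circ F^1$ has format $(\alpha_{i-1},\beta,s\sum_{j<i}n_j)$ with $\alpha_{i-1}=(i-1)(\alpha+\beta-1)-\beta+1$, and that all its components share one chain $\boldq$. Then $z^i=A^iG_{i-1}+b^i$ is a linear combination of components over the same chain. It keeps chain degree $\alpha_{i-1}$ and degree $\beta$, and the chain does not lengthen, since the components share $\boldq$ (Remark~\ref{re:1}). Now compose with the autonomous $\sigma^i_j$. The new chain consists of $\boldq$ together with, for each node $j$ of layer $i$, the functions $q^{ij}_r(z^i_j)$. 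This is Lemma~\ref{le:composition}(3), applied node by node, where $\boldq$ is reused and each node adds $s$ functions. The total length is therefore $s\sum_{j\le i}n_j$.

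The degree bookkeeping follows the autonomous computation in the proof of Lemma~\ref{le:composition}. We have
\begin{equation*}
\frac{\partial q^{ij}_r(z^i_j)}{\partial x_k}=P^{ij}_r\big(q^{ij}_1(z^i_j),\dots,q^{ij}_r(z^i_j)\big)\,\frac{\partial z^i_j}{\partial x_k}.
\end{equation*}
The first factor has degree $\le\alpha$ in the new chain elements. The second factor has degree $\le\alpha_{i-1}+\beta-1$ in the old ones, by Lemma~\ref{le:pfaffalgebra}(3). The new chain degree is therefore $\alpha+\alpha_{i-1}+\beta-1=i(\alpha+\beta-1)-\beta+1$. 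The degree of the output components stays $\beta$, because $\sigma^i_j(z^i_j)=Q^{ij}(q^{ij}_1(z^i_j),\dots)$ involves only the new chain elements. The case $i=\ell$ is then the special case giving format $(\ell(\alpha+\beta-1)-\beta+1,\beta,sh)$.

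For the output layer, apply the same step once more with $g$ autonomous of format $(\alpha',\beta',s')$. Each of the $m$ output components adds $s'$ chain functions, for a total length $sh+s'm$. The degree is $\beta'$. The chain degree is $\alpha'+\alpha_\ell+\beta-1=\ell(\alpha+\beta-1)+\alpha'$.

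The only delicate point is the chain length. The general count $ms+s'$ in Lemma~\ref{le:composition}(2) would multiply lengths layer by layer. We must instead use that all components share a single chain, and that each $\sigma^i_j$ depends on one scalar preactivation, so that its chain contributes only $s$ functions. This is what makes the length additive, rather than multiplicative, in the number of units. A minor check is that the formula for $\alpha_i$ is consistent at $i=1$, which holds as verified above.
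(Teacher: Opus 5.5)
Your proof is correct and follows the paper's approach: an induction on depth using the autonomous composition bound of Lemma~\ref{le:composition}(2), with the shared-chain observation of Lemma~\ref{le:composition}(3) keeping the order additive ($s$ new chain functions per unit). Your separate treatment of the affine first layer is also right: there $\partial z^1_j/\partial x_k$ is constant, so the chain degree is exactly $\alpha$, and this is the correct anchor for the recursion $\alpha_i=\alpha_{i-1}+\alpha+\beta-1$.
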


Thus, for a fixed set of activation functions (and hence fixed $\alpha$, $\beta$, $s$), the chain-degree grows linearly in the depth $\ell$, the function degree is constant, and the order grows linearly in $h$.

\begin{remark}\label{re:chain-length-tight}
The order $sh$ should not be expected to improve substantially for generic fully connected networks: the intermediate activation values arise independently in the partial derivatives of the network output via the chain rule, so one cannot generally reuse a small fixed set of chain elements across all hidden units. The structural optimisation provided by Lemma~\ref{le:composition}(3), which keeps the order additive across layers rather than multiplicative, is already incorporated into the bound.
\end{remark}

\begin{example}[Activation functions]\label{ex:activations}
Table~\ref{tab:activations} lists common activation functions and their Pfaffian formats. Here $\sigma(x)=(1+\econst^{-x})^{-1}$ is the logistic sigmoid, $\phi$ and $\Phi$ denote the standard Gaussian density and cdf, respectively, and $\mathrm{sp}(x)=\ln(1+\econst^x)$ is the softplus function~\cite{glorot2011deep}. GELUs were introduced in~\cite{hendrycks2016gaussian}. Nonsmooth activation functions such as ReLU do not fall into our framework, nor does the ELU, which is not analytic at the origin.

\begin{table}[h!]
  \centering
  \begin{tabular}{lllcccc}
    \toprule
    Activation & $\sigma(x)$ & Chain $\boldq$ & $\alpha$ & $\beta$ & $s$ & Auton.\\
    \midrule
    Exponential & $\econst^x$ & $(\econst^x)$ & 1 & 1 & 1 & Yes \\
    Sigmoid & $(1+\econst^{-x})^{-1}$ & $(\sigma)$ & 2 & 1 & 1 & Yes \\
    Tanh & $\tanh(x)$ & $(\tanh)$ & 2 & 1 & 1 & Yes \\
    Softplus & $\ln(1+\econst^x)$ & $(\sigma,\mathrm{sp})$ & 2 & 1 & 2 & Yes \\
    SiLU/Swish & $x\sigma(x)$ & $(\sigma)$ & 2 & 2 & 1 & No \\
    GELU & $x\Phi(x)$ & $(\phi,\Phi)$ & 2 & 2 & 2 & No \\
    Mish & $x\tanh(\mathrm{sp}(x))$ & $(\sigma,\mathrm{sp},\tanh(\mathrm{sp}))$ & 3 & 2 & 3 & No \\
    Gaussian & $\econst^{-x^2}$ & $(\econst^{-x^2})$ & 2 & 1 & 1 & No \\
    Arctan & $\arctan(x)$ & $((1+x^2)^{-1},\arctan)$ & 3 & 1 & 2 & No \\
    \bottomrule
  \end{tabular}
  \caption{Pfaffian formats of common activation functions.}
  \label{tab:activations}
\end{table}
\end{example}


\begin{example}[Softmax]\label{ex:softmax}
A common output function for classification problems is the softmax function. Each component of the softmax function
\begin{equation}\label{eq:softmax}
  g(x) = \left(\frac{\mathrm{e}^{x_1}}{\sum_{j}\mathrm{e}^{x_j}}, \dots, \frac{\mathrm{e}^{x_m}}{\sum_{j}\mathrm{e}^{x_j}}\right).
\end{equation}
is Pfaffian with format $(3,1,m)$, but the Pfaffian chains are not identical. For example, if we set
$$
q_{ij}(x) = e^{x_j - x_i} 
$$
then we have
\begin{equation*}
  \frac{\partial g_i}{\partial x_j}(x) = \begin{cases}
    -q_{ij} (x) g_i(x)^2 & \text{ if } i\neq j\\
    g_i(x)(1-g_i(x)) & \text{ if } i=j.
  \end{cases}
\end{equation*}
In this case, a Pfaffian chain for $g_i$ is given by $(q_{i1}, \dotsc , \widehat{q_{ii}}, \dotsc, q_{im}, g_i)$. Note that this also gives a Pfaffian chain for any other $g_j$, but yielding a format $(3, 2, m)$ as one has to write $g_j = g_i q_{ij}$.
\end{example}

When using a neural network as a classifier, we can interpret $F_i(x)$ as representing the likelihood that $x$ belongs to class $i$ (for example, if the output map $g$ is the softmax function). Thus a data point $x\in \R^n$ is assigned to class $j$ if $F_j(x)>F_i(x)$ for all $i\neq j$, with an arbitrary tie break if $F_i(x)=F_j(x)$.
For $i\neq j$, define the functions
$g_{ij} := F_j-F_i$. The map $F$ induces a subdivision of $\R^n$ into $m$ regions
\begin{equation*}
 C_j = \{x\in \R^n \colon g_{ij}(x) \geq 0 \text{ for } i\neq j\}, \quad \quad j\in [m].
\end{equation*}
If $C_j$ has non-empty interior, then the interior $\inter(C_j)$ is the set of $x$ that are unequivocally assigned to class $j$. 
The boundary of $C_j$ and the decision boundary of the classifier are given by
\begin{equation*}
 \Sigma_j = \bigcup_{i\neq j} (C_j\cap C_i), \quad j\in [m] \quad \text{ and }  \quad \Sigma = \bigcup_j \Sigma_j,
\end{equation*}
respectively. 

The following result is a straightforward consequence of the algebraic properties of Pfaffian functions and the definition of semi-Pfaffian sets.

\begin{proposition}\label{prop:decision-pfaffian}
Assume the neural network classifier implemented by $F\colon \R^n\to \R^m$ is a Pfaffian function. Then the functions $g_{ij}:=F_j-F_i$ are also Pfaffian, and the decision boundaries $\Sigma_j$ and $\Sigma$ are semi-Pfaffian sets. 
\end{proposition}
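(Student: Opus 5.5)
The plan is to verify the two claims in turn, using only the closure properties of Section~\ref{sec:pfaffian} and the definition of semi-Pfaffian sets.

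\emph{The functions $g_{ij}$.} Each component $F_i$ is Pfaffian by hypothesis. By Lemma~\ref{le:pfaffalgebra}(1) and (2), with $-1$ a constant polynomial of degree $0$, the difference $F_j-F_i$ is again Pfaffian. By Remark~\ref{re:1} we may take all components of $F$ with respect to one concatenated chain; for instance, Proposition~\ref{prop:nn-format} already provides a common chain for the network. With a common chain, $g_{ij}$ has the same chain and degree bounded by the maximum of the degrees of $F_i$ and $F_j$. This quantitative remark is not needed for the statement, but it is worth recording for later use.

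\emph{The regions $C_j$.} Each $C_j=\bigcap_{i\neq j}\{g_{ij}\ge 0\}$ is a finite intersection of sets of the form $\{g\ge 0\}$ with $g$ Pfaffian. Each such set is semi-Pfaffian, since it is the union of the basic sets $\{g=0\}$ and $\{g>0\}$. Using the observation after the definition that semi-Pfaffian sets are exactly the Boolean combinations of sets $\{g\star 0\}$ with $\star\in\{=,\le,\ge\}$, it follows that $C_j$ is semi-Pfaffian.

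\emph{The boundaries.} $\Sigma_j=\bigcup_{i\neq j}(C_j\cap C_i)$ is a finite union of finite intersections of semi-Pfaffian sets, and $\Sigma=\bigcup_j\Sigma_j$ is a finite union of these. So both are semi-Pfaffian by the same Boolean-closure fact.

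The only point needing care is to make explicit that intersections of basic semi-Pfaffian sets are basic. This is immediate: one concatenates the lists of equations and of strict inequalities, and concatenating the chains is harmless by the remark following the definition. Intersections of finite unions then distribute into finite unions of basic sets. The remaining steps are purely formal.
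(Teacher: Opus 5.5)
Your proof is correct and follows the same route as the paper, which gives no separate proof and calls the result a straightforward consequence of the closure of Pfaffian functions under sums and scalar multiples (Lemma~\ref{le:pfaffalgebra}, Remark~\ref{re:1}) and of the definition of semi-Pfaffian sets. You spell out the Boolean-closure step more carefully than the paper does: you write $\{g\ge 0\}$ as a union of basic sets and note that intersections of basic sets are basic.
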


\begin{example}
Consider an $\ell$-layer neural network for classification in which all the activation functions at hidden nodes are of format $(2,1,1)$ (for example, $\tanh$ and the logistic sigmoid $\sigma$) and the output function is the softmax function. Then the $g_{ij}(x)$ are Pfaffian with format
\begin{equation*}
  (2\ell + 3, 1, h + 2m),
\end{equation*}
where $h$ is the total number of hidden nodes. If we consider the same network without the softmax layer, then the corresponding Pfaffian format is $(2\ell, 1, h)$.
\end{example}

Simply applying the Pfaffian tube formula to the setting of neural networks gives the following bound.

\begin{corollary}[Theorem~\ref{thm:prob_bound} for neural networks]\label{cor:nn-pfaff-tube}
Let $V=\mathcal{Z}(g_{ij})$ for $g_{ij}=F_j-F_i$, where $F\colon \R^n\to \R^m$ is implemented by a neural network with $\ell$ hidden layers, $h$ hidden 
nodes, activation functions with format $(2,1,1)$ and softmax outputs. Then under the assumptions of Theorem~\ref{thm:prob_bound},
\begin{equation*}
  \prob\{ d(X, V) \le \varepsilon \} \leq C_{\ell,h,n,m}\left(\frac{\varepsilon}{\rho}+O\left(\frac{\varepsilon^2}{\rho^2}\right)\right)
\end{equation*}
for a constant $C_{\ell,h,n,m}$. 
\end{corollary}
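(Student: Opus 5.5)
The plan is to apply Theorem~\ref{thm:prob_bound} directly to $f=g_{ij}$, after computing its Pfaffian format, and then expand the resulting bound in powers of $\varepsilon/\rho$.

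\textbf{Format of $g_{ij}$.} By the preceding example, which rests on Proposition~\ref{prop:nn-format} with $(\alpha,\beta,s)=(2,1,1)$ and the softmax format from Example~\ref{ex:softmax}, $g_{ij}=F_j-F_i$ is Pfaffian with format $(\alpha,\beta,s)=(2\ell+3,\,1,\,h+2m)$. Two points need care here.
\begin{itemize}
\item By Remark~\ref{re:1}, the difference of two Pfaffian functions sharing a common chain keeps that chain. Concatenating the softmax chains of $F_i$ and $F_j$ gives at most $2m$ extra elements, so the order is at most $h+2m$.
\item The degree stays equal to $1$ because both $F_i$ and $F_j$ are represented as single chain elements (the $g_i$ at the end of their chains), and subtracting them does not raise the degree.
\end{itemize}
Since Pfaffian classes are monotone in the format, this triple is a legitimate, if possibly pessimistic, format.

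\textbf{Applying the tube bound.} With $\beta=1$ we have $\overline{\beta}=2$. The hypotheses of Theorem~\ref{thm:prob_bound} (nonvanishing gradient on $V$ and boundedness of $V$) are assumed in the statement, so the theorem gives
\[
\prob\{d(X,V)\le\varepsilon\}\le C\Bigl[\bigl(1+(2\ell+6)\tfrac{\varepsilon}{\rho}\bigr)^n-\bigl(1+\tfrac{\varepsilon}{\rho}\bigr)^n\Bigr],
\qquad
C=6\cdot 2^{\frac{s(s-1)}{2}}\bigl(n(4\ell+7)+2\bigr)^s,\quad s=h+2m .
\]

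\textbf{Expanding in $\varepsilon/\rho$.} By the binomial theorem the constant terms cancel, the linear term is $n(2\ell+5)\,\varepsilon/\rho$, and all remaining terms are $O(\varepsilon^2/\rho^2)$, with coefficients depending only on $\ell$ and $n$. Set
\[
C_{\ell,h,n,m}:=C\cdot n(2\ell+5),
\]
and absorb the higher-order coefficients into the $O(\cdot)$. This is consistent with Remark~\ref{rem:prob_bound_simple}.

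\textbf{Where the difficulty lies.} The only step that needs attention is the format computation. The expansion is routine, and so is the $O$-notation for bounded $\varepsilon/\rho$. If one prefers a bound uniform for all $\varepsilon/\rho$, the binomial sum can be bounded explicitly instead.
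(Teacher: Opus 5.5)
Your proposal is correct and follows the paper's own argument. The paper gives no separate proof: it obtains the corollary by inserting the format $(2\ell+3,1,h+2m)$ of $g_{ij}$ from the preceding example into Theorem~\ref{thm:prob_bound} and reading off the linear term in $\varepsilon/\rho$, as in Remark~\ref{rem:prob_bound_simple}; your values $\alpha+\overline{\beta}+1=2\ell+6$, $C=6\cdot 2^{s(s-1)/2}\bigl(n(4\ell+7)+2\bigr)^s$ with $s=h+2m$, and leading coefficient $n(2\ell+5)$ all check out.
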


While Corollary~\ref{cor:nn-pfaff-tube} holds under rather simplified assumptions, the real problem with this bound is that the constant $C_{\ell,h,n,m}$ is exponential in the number of hidden units $h$. This is not only practically prohibitive, but presumably also theoretically suboptimal. We will therefore follow a different approach to bound tubular neighbourhoods of neural networks with certain specific activation functions.

\subsection{The Gauss map of a neural network}
For decision boundaries of single-layer sigmoid classifiers, we show that (under certain assumptions) the bound from Corollary~\ref{cor:nn-pfaff-tube} can be improved to a polynomial in the number of nodes.
Single-layer sigmoid neural networks are universal approximators~\cite{cybenko1989approximation, hornik1989multilayer}: functions of the form $c_0+\sum_{k=1}^w d_k\sigma(a_k^{\trans}x+b_k)$ are dense in the set of continuous functions on any compact set, with the width $w$ quantitatively controlling the approximation error at the dimension-independent rate $O(1/\sqrt{w})$~\cite{barron1993universal}.

The strategy is to replace the Khovanskii--Rolle bound with a global argument based on the Bernstein--Kushnirenko--Khovanskii (BKK) theorem~\cite{bernstein1975} (see also~\cite[Chapter 7,\S 5]{cox2005using}). After an exponential substitution, the Gauss-map system becomes a Laurent polynomial system whose $n$ equations share a single Newton polytope, a zonotope $\conv(A)$ determined by the (integer-scaled) weight vectors. Bernstein's theorem then bounds the number of solutions by $n!\,\operatorname{Vol}(\conv(A))$, which is polynomial in the width. For networks with $\ell\geq 2$ hidden layers, the direct BKK approach does not apply due to the transcendentality of sigmoid composition, and we state the corresponding polynomial bound as a conjecture.

The Bernstein-type count below requires that every solution of the system of equations derived from a Gauss-map fibre be non-degenerate. Since the argument forces the direction $v$ to be \emph{rational}, non-degeneracy cannot be arranged by a Sard-type genericity argument, which only yields full-measure sets of good directions. The following lemma shows that non-degeneracy is automatic at any regular value of the Gauss map; the proof is an application of the implicit function theorem.

\begin{lemma}\label{le:gauss-nondegenerate}
Let $f\colon\R^n\to\R$ be a smooth function such that $\nabla f$ does not vanish on $V=\mathcal{Z}(f)$, so that $V$ is a smooth hypersurface, and let $\gamma\colon S(NV)\to S^{n-1}$ be the generalised Gauss map of $V$. Let $v\in S^{n-1}$ be a regular value of $\gamma$ with $v_n\neq 0$, and define $\Phi_v\colon\R^n\to\R^n$ by
\begin{equation}\label{eq:gauss-cross-system}
  \Phi_v(x) = \Bigl(f(x),\ v_n\frac{\partial f}{\partial x_1}(x)-v_1\frac{\partial f}{\partial x_n}(x),\ \dots,\ v_n\frac{\partial f}{\partial x_{n-1}}(x)-v_{n-1}\frac{\partial f}{\partial x_n}(x)\Bigr).
\end{equation}
Then the solutions of $\Phi_v(x)=0$ are exactly the points $x\in V$ with $(x,v)\in\gamma^{-1}(v)$, and the Jacobian of $\Phi_v$ is non-singular at every solution.
\end{lemma}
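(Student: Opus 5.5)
The plan has two parts: first identify the solution set of $\Phi_v=0$ with the fibre of $\gamma$, then show non-degeneracy by comparing the Jacobian of $\Phi_v$ with the differential of $\gamma$ in a local chart of $S(NV)$.

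\emph{Identification of the fibre.} Since $V$ is a hypersurface, $S(NV)=\{(x,\pm\nabla f(x)/\|\nabla f(x)\|)\colon x\in V\}$, a double cover of $V$. Thus $(x,v)\in\gamma^{-1}(v)$ if and only if $x\in V$ and $\nabla f(x)$ is parallel to $v$. Because $v_n\neq0$, parallelism of two vectors, one of them nonzero, is equivalent to the vanishing of the $2\times2$ minors
\[
v_n\,\partial_i f(x)-v_i\,\partial_n f(x),\qquad i\in[n-1],
\]
which are the last $n-1$ components of $\Phi_v$. Indeed, if all these vanish then $\nabla f(x)=(\partial_n f(x)/v_n)\,v$, and $\partial_n f(x)\neq0$ because $\nabla f(x)\neq0$. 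This gives the first claim.

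\emph{Non-degeneracy.} Fix a solution $x_0$ and, after replacing $v$ by $-v$ if needed (which changes neither $\Phi_v$ up to sign nor regularity, by the antipodal symmetry of $S(NV)$), assume $\nabla f(x_0)=\lambda v$ with $\lambda>0$. Near $x_0$, parametrise $S(NV)$ by $V$ through $x\mapsto(x,\nu(x))$ with $\nu=\nabla f/\|\nabla f\|$, so $\gamma$ becomes $\nu|_V$. Regularity of $v$ means that $d\nu(x_0)\colon T_{x_0}V=v^\perp\to T_vS^{n-1}=v^\perp$ is an isomorphism.

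Now suppose $D\Phi_v(x_0)w=0$. The first component gives $\langle\nabla f(x_0),w\rangle=0$, so $w\in T_{x_0}V$. The remaining components say that $u:=D(\nabla f)(x_0)w=\mathrm{Hess} f(x_0)w$ satisfies $v_nu_i-v_iu_n=0$ for all $i$, that is, $u\parallel v$. Differentiating $\nu=\nabla f/\|\nabla f\|$ gives
\[
d\nu(x_0)w=\frac{1}{\|\nabla f(x_0)\|}\bigl(u-\langle u,v\rangle v\bigr),
\]
the projection of $u$ onto $v^\perp$, scaled. Since $u\parallel v$, this projection is zero, so $d\nu(x_0)w=0$. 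Injectivity of $d\nu(x_0)$ then forces $w=0$, so $D\Phi_v(x_0)$ is injective and hence nonsingular.

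The only delicate step is the derivative identity for $\nu$ and checking that the implicit-function chart of $S(NV)$ really identifies $d\gamma$ with $d\nu|_{T V}$; both are routine. The identity comes from the quotient rule, using that the derivative of $\|\nabla f\|$ in direction $w$ is $\langle u,\nu\rangle$.
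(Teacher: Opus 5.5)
Your proof is correct. The identification of the fibre is the same as in the paper, but your non-degeneracy argument takes a genuinely different and shorter route.

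The paper first rotates so that $v=e_n$, which means writing $\Phi_v=\operatorname{diag}(1,T)\,\Psi\circ Q^{\trans}$ with $\Psi=(g,\partial_1 g,\dots,\partial_{n-1}g)$. It then uses the implicit function theorem to write $W=Q^{-1}V$ locally as a graph $u\mapsto(u,h(u))$. Expanding the determinant, it obtains $|\det D\Psi(y)|=|\partial_n g(y)|^{n}\,|\det\operatorname{Hess}h(u_0)|$, and finally identifies $-\operatorname{Hess}h(u_0)$ with the differential of the Gauss map in that chart.

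You stay in the original coordinates and use the chart $x\mapsto(x,\nu(x))$ of a sheet of $S(NV)$. Your kernel computation shows that $w\in\ker D\Phi_v(x_0)$ exactly when $w\in T_{x_0}V$ and $\operatorname{Hess}f(x_0)w\parallel v$, which is exactly $d\nu(x_0)w=0$. Read in both directions, it gives $\ker D\Phi_v(x_0)=\ker\bigl(d\nu(x_0)|_{T_{x_0}V}\bigr)$. This is the same equivalence the paper extracts from its determinant identity, but without the bookkeeping of $Q$, $B$, $T$ or the graph parametrisation. What the paper's route gives in return is an explicit formula tying the Jacobian determinant to the Hessian of the local graph function, i.e. to the Gauss--Kronecker curvature. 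The lemma itself does not need that formula.

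One small simplification: your normalisation $v\mapsto -v$ is harmless but unnecessary. If $\nabla f(x_0)$ is a negative multiple of $v$, just use the other sheet $x\mapsto(x,-\nu(x))$, on which $d\gamma=-d\nu$.
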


\begin{proof}
Let $A\colon\R^n\to\R^{n-1}$ be the linear map $Aw=(v_nw_1-v_1w_n,\dots,v_nw_{n-1}-v_{n-1}w_n)$, so that $\Phi_v=(f,A\nabla f)$. Since $v_n\neq0$, the map $A$ is surjective with kernel $\R v$. If $\Phi_v(x)=0$, then $x\in V$ and $\nabla f(x)\in\ker A=\R v$; since $\nabla f(x)\neq0$, the unit vector $v$ is normal to $V$ at $x$, that is, $(x,v)\in\gamma^{-1}(v)$. Conversely, if $(x,v)\in\gamma^{-1}(v)$, then $\nabla f(x)\in\R v$ and $\Phi_v(x)=0$.

For the non-degeneracy, we first reduce to the case $v=e_n$. Choose an orthogonal matrix $Q$ with $Qe_n=v$ and set $g:=f\circ Q$ and $W:=\mathcal{Z}(g)=Q^{-1}V$, so that $\nabla g=Q^{\trans}(\nabla f\circ Q)$ does not vanish on $W$. The map $(x,u)\mapsto(Q^{\trans}x,Q^{\trans}u)$ is a diffeomorphism $S(NV)\to S(NW)$ intertwining the Gauss maps, so $e_n=Q^{\trans}v$ is a regular value of the Gauss map $\gamma_W$ of $W$. Let $B\colon\R^n\to\R^{n-1}$ be given by $Bw=((Q^{\trans}w)_1,\dots,(Q^{\trans}w)_{n-1})$; then $B$ is surjective with $\ker B=Q(\R e_n)=\R v=\ker A$. Two surjective linear maps with the same kernel differ by an invertible map, so $A=T\circ B$ for some $T\in\mathrm{GL}_{n-1}(\R)$. Writing $y=Q^{\trans}x$ and $\Psi(y):=(g(y),\partial_1 g(y),\dots,\partial_{n-1}g(y))$, we have $B\nabla f(x)=(\partial_1g(y),\dots,\partial_{n-1}g(y))$ and hence
\[
  \Phi_v(x)=\begin{pmatrix}1&0\\0&T\end{pmatrix}\Psi(Q^{\trans}x),
\]
so the solutions of $\Phi_v=0$ and $\Psi=0$ correspond under $x=Qy$, and $D\Phi_v(x)$ is non-singular if and only if $D\Psi(y)$ is. It therefore suffices to show that $D\Psi$ is non-singular at every solution of $\Psi(y)=0$.

Let $y$ be such a solution. Then $\nabla g(y)=(0,\dots,0,\partial_ng(y))$ with $\partial_ng(y)\neq0$. By the implicit function theorem there are an open neighbourhood $U\subset\R^{n-1}$ of $u_0:=(y_1,\dots,y_{n-1})$ and a smooth function $h\colon U\to\R$ with $h(u_0)=y_n$ such that $(u,h(u))$ parametrises $W$ near $y$. Differentiating $g(u,h(u))=0$ with respect to $u_i$ gives
\begin{equation}\label{eq:ift-local}
  \frac{\partial g}{\partial x_i}+\frac{\partial g}{\partial x_n}\frac{\partial h}{\partial u_i}=0 \qquad\text{on } U,
\end{equation}
so in particular $\nabla h(u_0)=0$; differentiating~\eqref{eq:ift-local} with respect to $u_j$ and evaluating at $u_0$, where the terms involving $\nabla h$ vanish, yields
\begin{equation*}
  \frac{\partial^2g}{\partial x_j\partial x_i}(y)=-\frac{\partial g}{\partial x_n}(y)\,\frac{\partial^2h}{\partial u_j\partial u_i}(u_0),\qquad 1\le i,j\le n-1.
\end{equation*}
The Jacobian $D\Psi(y)$ has first row $\nabla g(y)=\partial_ng(y)\,e_n^{\trans}$ and remaining rows $\nabla(\partial_jg)(y)$, $j\le n-1$; expanding the determinant along the first row,
\begin{equation*}
  |\det D\Psi(y)|=|\partial_ng(y)|\cdot\Bigl|\det\Bigl(\frac{\partial^2g}{\partial x_j\partial x_i}(y)\Bigr)_{i,j\le n-1}\Bigr|
  =|\partial_ng(y)|^{\,n}\,\bigl|\det\operatorname{Hess}h(u_0)\bigr|.
\end{equation*}
In the chart $u\mapsto(u,h(u))$, the unit normal to $W$ with positive $n$-th component is
\begin{equation*}
  \nu(u)=\frac{(-\nabla h(u),1)}{\sqrt{1+\|\nabla h(u)\|^2}},
\end{equation*}
and $u\mapsto\bigl((u,h(u)),\nu(u)\bigr)$ parametrises the sheet of $S(NW)$ through $(y,e_n)$, on which $\gamma_W$ is given by $\nu$. Since $\nabla h(u_0)=0$, differentiating gives $\partial\nu_j/\partial u_i(u_0)=-\partial^2h/\partial u_i\partial u_j(u_0)$ for $i,j\le n-1$. Regularity of $\gamma_W$ at the preimage $(y,e_n)\in\gamma_W^{-1}(e_n)$ therefore forces $\det\operatorname{Hess}h(u_0)\neq0$, and hence $\det D\Psi(y)\neq0$.
\end{proof}

\begin{proposition}\label{prop:single-layer-degree}
  Let $f\colon \R^n \to \R$ be a non-constant function given by
  \begin{equation*}
  f(x) = c_0+\sum_{k=1}^w d_k \sigma(a_k^{\trans}x+b_k), 
  \end{equation*}
  where $\sigma$ is the logistic sigmoid, $c_0,b_k, d_k\in \R$ and the $a_k\in \mathbb{Q}^{n}$ have common denominator $q\in\N_{>0}$. Set $V=\mathcal{Z}(f)$ and $L := q\cdot\max_{k,i}|a_{ki}|$.
  If $V$ is a smooth compact hypersurface and $\nabla f$ is non-vanishing on $V$, then the maximum degree satisfies
  \begin{equation}\label{eq:single-layer-bound}
    \md(V) \;\leq\; C(n,L)\cdot w^{n},
  \end{equation}
  where $C(n,L)\leq 2\cdot n!\,(2L)^n$ depends only on $n$ and $L$.
\end{proposition}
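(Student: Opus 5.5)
We bound $|\gamma^{-1}(v)|$ for a regular value $v$ of the Gauss map. Since $|\gamma^{-1}(v)|$ is locally constant on the open set of regular values, which is dense by Sard, we may take $v$ rational with $v_n\neq 0$. By Lemma~\ref{le:gauss-nondegenerate}, the points of $V$ over $v$ are exactly the solutions of $\Phi_v(x)=0$, and all of them are non-degenerate. Each point of $V$ contributes at most the two preimages $(x,\pm v)$ of $S(NV)$. Hence it suffices to bound the number of non-degenerate solutions of $\Phi_v=0$ together with those of $\Phi_{-v}=0$; this is the same system, which accounts for the factor $2$.

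\textbf{Step 1: exponential substitution.} Write $a_k=m_k/q$ with $m_k\in\Z^n$, $\|m_k\|_\infty\le L$, and set $t_i=\econst^{-x_i/q}$, a diffeomorphism $\R^n\to(0,\infty)^n$. Then
\[
\econst^{-(a_k^{\trans}x+b_k)}=\beta_k t^{m_k},\qquad \beta_k=\econst^{-b_k}>0,
\]
so $\sigma(a_k^{\trans}x+b_k)=1/(1+\beta_k t^{m_k})$, and $\partial_i f=\sum_k d_k a_{ki}\,\sigma_k(1-\sigma_k)$ with $\sigma_k(1-\sigma_k)=\beta_kt^{m_k}/(1+\beta_kt^{m_k})^2$. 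Multiplying each equation of $\Phi_v$ by the common positive denominator $D(t)=\prod_k(1+\beta_kt^{m_k})^2$ gives Laurent polynomials $G_1,\dots,G_n$. On the positive orthant their zeros coincide with those of $\Phi_v$. Non-degeneracy is preserved: at a zero of $\Phi_v$, multiplying by the nonvanishing factor $D$ and composing with the coordinate diffeomorphism does not change the rank of the Jacobian. Each monomial of $G_j$ has exponent in the Minkowski sum
\[
P=\sum_{k=1}^w \conv\{0,m_k,2m_k\}=2\sum_{k=1}^w[0,m_k],
\]
a zonotope, possibly shifted by one $m_k$ inside. After clearing to a common Newton polytope, all $n$ equations have Newton polytope contained in a single zonotope $Z$.

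\textbf{Step 2: Bernstein count.} By Bernstein's theorem, the number of isolated solutions in $(\C^*)^n$ is at most $n!\,\vol(Z)$. Non-degenerate real positive solutions are isolated in $(\C^*)^n$. If the system has non-isolated components, we use the refined form of Bernstein's theorem: the number of isolated solutions of a square system is bounded by the mixed volume even when the system is not generic (Li--Wang / Fulton excess-intersection refinements). Alternatively, we perturb the coefficients and use the non-degeneracy of the solutions to argue that they persist.

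\textbf{Step 3: volume of $Z$.} A zonotope $\sum_k[0,u_k]$ with $u_k=2m_k\in\Z^n$ has volume
\[
\sum_{|S|=n}|\det(u_k)_{k\in S}|\le\binom{w}{n}(2L)^n n^{n/2}
\]
by Hadamard's inequality. This bound has the wrong constant. Instead, we use the coarser containment $Z\subset[-2Lw,2Lw]^n$, which gives $\vol(Z)\le(4Lw)^n$. To reach the stated constant $n!(2L)^n$, we exploit the fact that $Z$ is centrally symmetric about $\sum_k m_k$ and project along coordinates: the width of $Z$ in each direction $e_i$ is at most $2L w$, so $Z$ lies in a translate of $[0,2Lw]^n$, and $\vol(Z)\le(2Lw)^n$. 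Then $n!\,(2Lw)^n$ times the factor $2$ gives $\md(V)\le 2\,n!\,(2L)^n w^n$.

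\textbf{Main obstacle.} The most delicate point is Step 2. First, rationality of $v$ must keep the coefficients legitimate; real coefficients are fine for Bernstein, so rationality actually matters only in keeping exponents integral, and the $v_i$ enter only as coefficients. Second, one must justify that the non-degenerate positive real solutions are isolated in $(\C^*)^n$ and counted by the Bernstein bound without genericity of the coefficients. I would handle this by the standard fact that isolated solutions of any square Laurent system number at most the mixed volume (Bernstein's theorem, part~A, applied via a homotopy to generic coefficients). Non-degeneracy, supplied by Lemma~\ref{le:gauss-nondegenerate}, ensures that each such solution is isolated.
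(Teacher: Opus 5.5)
Your proof is correct and is essentially the paper's argument: the substitution $t_i=\econst^{-x_i/q}$, clearing the positive denominators to obtain Laurent polynomials supported in the zonotope $Z=\sum_k[0,2m_k]$, Bernstein's bound on isolated roots in $(\C^*)^n$ (with isolatedness coming from the non-degeneracy in Lemma~\ref{le:gauss-nondegenerate}), and the estimate $\vol(Z)\le(2Lw)^n$. Two small points. First, your bounding-box estimate for $\vol(Z)$ is a slicker substitute for the paper's zonotope formula plus Hadamard; contrary to your remark, that route also gives the right constant, since $n^{n/2}\le n!$ and $\binom{w}{n}\le w^n/n!$. Second, your factor $2$ is superfluous: $\gamma^{-1}(v)$ consists only of pairs $(x,v)$ and is in bijection with the zeros of $\Phi_v$, so the paper obtains $n!\,(2L)^n w^n$ and simply absorbs the slack into $C(n,L)$.
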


\begin{proof}
Let $\gamma\colon S(NV)\to S^{n-1}$ be the generalised Gauss map. Since $S(NV)$ is compact, by Sard's Theorem the set of regular values of $\gamma$ is open and dense, and the fibre cardinality is locally constant. Choose a connected component of the set of regular values on which the maximal degree $\md(V)$ is attained, and choose a rational point $v=(v_1,\ldots,v_n)\in S^{n-1}\cap\Q^n$ in that component. After permuting the coordinates, which preserves rationality and the lattice constant $L$, we may assume $v_n\neq 0$.

For a hypersurface, $\gamma^{-1}(v)$ is the set of points $x\in V$ for which $\nabla f(x)$ is parallel to $v$. Thus $\md(V)$ is bounded by the number of solutions of
\begin{equation}\label{eq:single-layer-system}
  f(x) = 0, \qquad v_n\frac{\partial f}{\partial x_j}(x)-v_j\frac{\partial f}{\partial x_n}(x) = 0, \quad j = 1,\ldots, n-1,
\end{equation}
where
\begin{equation}\label{eq:single-layer-f}
  f(x) = c_0 + \sum_{k=1}^w d_k\,\sigma(a_k^{\trans}x + b_k),
  \qquad
  \frac{\partial f}{\partial x_j}(x) = \sum_{k=1}^w d_k\,a_{kj}\,\sigma'(a_k^{\trans}x + b_k).
\end{equation}
Set $\vec p_k := q\,a_k\in\Z^n$, so that $\|\vec p_k\|_\infty\leq L$, and set $y_j=\econst^{-x_j/q}$. We get
\begin{equation*}
  \econst^{-(a_k^{\trans}x + b_k)}
  \;=\; \econst^{-b_k}\,\prod_{j=1}^n \econst^{-(qa_{kj})\,x_j/q}
  \;=\; \econst^{-b_k}\, y^{\vec p_k}
  \;=:\; u_k(y),
\end{equation*}
where we use the notation $y^{\vec p_k}=y_1^{qa_{k1}}\cdots y_n^{qa_{kn}}$. 
Each sigmoid becomes $\sigma_k = (1+u_k)^{-1}$ and its derivative $\sigma'_k = u_k(1+u_k)^{-2}$. Set $\Phi(y):=\prod_{k=1}^w(1+u_k(y))^2$, a strictly positive analytic function on $\R_{>0}^n$. Multiplying~\eqref{eq:single-layer-system} by the non-vanishing factor $\Phi(y)$, the system transforms into the following Laurent polynomial system on $(\R_{>0})^n$:
\begin{equation}\label{eq:laurent-system}
  F_0(y) \;:=\; c_0\prod_{k=1}^w(1+u_k)^2 + \sum_{k=1}^w d_k (1 + u_k) \, \prod_{l\neq k}(1+u_l)^2 \;=\; 0,
\end{equation}
\begin{equation}\label{eq:laurent-system-j}
  F_j(y) \;:=\; \sum_{k=1}^w d_k\,(v_n a_{kj}-v_j a_{kn})\,u_k\!\!\prod_{l\neq k}(1+u_l)^2 \;=\; 0, \qquad j=1,\ldots,n-1.
\end{equation}
Each factor $(1+u_k)^2 = 1 + 2u_k + u_k^2$ contributes monomials with exponents in $\{0,\vec p_k,2\vec p_k\}$, so all monomials appearing in~\eqref{eq:laurent-system}--\eqref{eq:laurent-system-j} have exponent vectors in the Minkowski sum
\begin{equation}\label{eq:zonotope-support}
  A \;:=\; \sum_{k=1}^w \{0,\vec p_k,2\vec p_k\} \;\subset\; \Z^n.
\end{equation}
Note that both $F_0$ and the $F_j$ have support inside the same set $A$. The Newton polytopes $P_0, P_1, \ldots, P_{n-1}$ of the $F_i$ are therefore all contained in $\conv(A)$.

By Bernstein's theorem, a system of $n$ Laurent polynomials in $n$ variables with Newton polytopes $P_0,\ldots,P_{n-1}$ has at most
\begin{equation*}
  n!\,\operatorname{MV}(P_0,\ldots,P_{n-1})
\end{equation*}
isolated solutions in $(\C^*)^n$, counted with multiplicity, where $\operatorname{MV}$ denotes the Euclidean mixed volume, normalized so that $\operatorname{MV}(P,\ldots,P)=\operatorname{Vol}(P)$. Since $P_j\subseteq\conv(A)$ for all $j$ and the mixed volume is monotone in each argument, the bound is at most $n!\,\operatorname{Vol}(\conv(A))$. The diffeomorphism $\Psi\colon x_j\mapsto y_j=\econst^{-x_j/q}$ embeds $\R^n$ bijectively onto $(\R_{>0})^n\subset(\C^*)^n$, and multiplication by $\Phi(y)$ is non-vanishing there. Since $v$ is a regular value of the Gauss map with $v_n\neq0$ and $\nabla f$ does not vanish on $V$, Lemma~\ref{le:gauss-nondegenerate} shows that every solution of~\eqref{eq:single-layer-system} is non-degenerate, and hence corresponds under $\Psi$ to a non-degenerate solution of~\eqref{eq:laurent-system}--\eqref{eq:laurent-system-j} in $(\C^*)^n$. These real solutions are therefore among the isolated torus solutions counted above, so the real count is bounded by $n!\,\operatorname{Vol}(\conv(A))$.

The set $A$ lies in the zonotope $Z := \sum_{k=1}^w[0,2\vec p_k]$, so $\conv(A)\subseteq Z$. The zonotope volume formula (see~\cite[Ch.~7]{ziegler1995}) gives
\begin{equation}\label{eq:zonotope-volume}
  \operatorname{Vol}(Z) \;=\; 2^n \sum_{\substack{S\subset[w]\\ |S|=n}} \bigl|\det(\vec p_{k_1},\ldots,\vec p_{k_n})\bigr|,
\end{equation}
where $S=\{k_1,\ldots,k_n\}$. Hadamard's inequality together with $\|\vec p_k\|_\infty\leq L$ gives 
\begin{equation*}
|\det(\vec p_{k_1},\ldots,\vec p_{k_n})|\leq n^{n/2}\,L^n \leq n!\,L^n,
\end{equation*}
and there are $\binom{w}{n}\leq w^n/n!$ subsets, so
\begin{equation*}
  \operatorname{Vol}(\conv(A)) \;\leq\; \operatorname{Vol}(Z) \;\leq\; 2^n\cdot n!\,L^n\cdot\frac{w^n}{n!} \;=\; (2L)^n\,w^n.
\end{equation*}
The Bernstein bound implies that there are at most $n!\,(2L)^n\,w^n$ solutions of~\eqref{eq:single-layer-system}. This is stronger than the stated estimate, and is absorbed into the constant $C(n,L)\leq 2\cdot n!\,(2L)^n$ in~\eqref{eq:single-layer-bound}.
\end{proof}

We next show that the bound is essentially sharp in its dependence on $w$. 
The following lemma is a variation of a well-known fact about the Gauss map of a compact hypersurface.

\begin{lemma}\label{lem:bounded-gauss-surjective}
Let $\Sigma\subset\R^n$ be a smooth compact hypersurface that bounds a non-empty bounded open region $W$. Then for every $v\in S^{n-1}$ the fibre of the generalised Gauss map $\gamma_\Sigma\colon S(N\Sigma)\to S^{n-1}$ satisfies $|\gamma_\Sigma^{-1}(v)|\ge 2$. In particular $\gamma_\Sigma$ is surjective and $\md\gamma_\Sigma\ge 2$.
\end{lemma}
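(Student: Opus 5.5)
Fix $v\in S^{n-1}$ and consider the linear height function $\ell_v(x)=\langle x,v\rangle$ restricted to $\Sigma$. Since $\Sigma$ is compact and non-empty (it bounds a non-empty bounded open region, so $\Sigma=\partial W\neq\emptyset$), $\ell_v|_\Sigma$ attains a maximum at some $p_+\in\Sigma$ and a minimum at some $p_-\in\Sigma$. At any critical point $p$ of $\ell_v|_\Sigma$ we have $v\perp T_p\Sigma$, so $v\in N_p\Sigma$ and $(p,v)\in\gamma_\Sigma^{-1}(v)$. This gives the candidates $(p_+,v)$ and $(p_-,v)$, and the task is to show that they are distinct elements of the fibre.

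The first case is $p_+\neq p_-$. Then $(p_+,v)\neq(p_-,v)$ as points of $S(N\Sigma)$, and we are done. The second case is $p_+=p_-$. Then $\ell_v$ is constant on $\Sigma$, so $\Sigma$ is contained in the hyperplane $\{\langle x,v\rangle=c\}$. I would rule this out using the region $W$. Take any $w_0\in W$ and consider the ray $w_0+tv$, $t\ge0$. Since $W$ is bounded, the ray leaves $\overline W$, so it meets $\partial W\subseteq\Sigma$ at some $t_1\ge 0$. The same holds for the ray in direction $-v$, at some $t_2\le 0$, and $t_1\ne t_2$ because $w_0$ is an interior point. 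These give two points of $\Sigma$ with different values of $\ell_v$, contradicting constancy. Hence $p_+\neq p_-$ always, and every fibre has at least two points.

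For the final assertions, surjectivity is immediate since every fibre is non-empty. Sard's theorem gives a regular value $v$, and $S(N\Sigma)$ is compact, so $\md\gamma_\Sigma\ge|\gamma_\Sigma^{-1}(v)|\ge2$. The only subtlety I anticipate is making sure that ``bounds $W$'' is used correctly in the second case, meaning $\partial W\subseteq\Sigma$ with $W$ open, non-empty and bounded. This is exactly what the ray argument requires, so I do not expect a serious obstacle.
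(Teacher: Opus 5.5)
Your proof is correct and follows essentially the same route as the paper: extremise the height function $\langle x,v\rangle$ and use the maximiser and the minimiser as two distinct preimages of $v$. The difference is only bookkeeping. The paper extremises over $\overline W$, so openness of $W$ gives $p_\pm\in\partial W$ with $p_+\neq p_-$ at once, and the supporting hyperplane then gives $T_{p_\pm}\Sigma=v^\perp$. You extremise over $\Sigma$, so normality comes from the critical-point condition, and you need your ray argument to rule out $\ell_v$ being constant on $\Sigma$.
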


\begin{proof}
Fix $v\in S^{n-1}$ and consider the linear functional $\ell(x)=\langle x,v\rangle$ on the compact set $\overline{W}$. It attains its maximum at some $p_+$ and its minimum at some $p_-$. Since $\ell$ is non-constant and $W$ is open, $p_+\neq p_-$ and $p_\pm\in\partial W=\Sigma$. At $p_+$ the hyperplane $\{x\colon \ell(x)=\ell(p_+)\}$ supports $\overline{W}$ with $\overline W$ on the side $\{\ell\le\ell(p_+)\}$. Since $\Sigma$ is smooth, $T_{p_+}\Sigma=v^{\perp}$, so the two unit normals at $p_+$ are $\pm v$ and in particular $(p_+,v)\in S(N\Sigma)$ with $\gamma_\Sigma(p_+,v)=v$. The same argument at $p_-$ (where $-v$ is an outward normal) gives $(p_-,v)\in S(N\Sigma)$ with $\gamma_\Sigma(p_-,v)=v$, since the normal sphere bundle of a hypersurface contains both signs of the normal over each base point. Thus $\{(p_+,v),(p_-,v)\}\subseteq\gamma_\Sigma^{-1}(v)$ are two distinct points in $\gamma_\Sigma^{-1}(v)$.
\end{proof}

\begin{proposition}\label{prop:single-layer-lowerbound}
The exponent $n$ in Proposition~\ref{prop:single-layer-degree} is sharp at fixed lattice constant. Precisely, for every $n\ge 1$ and every even $m\ge 4$ there is a single-hidden-layer sigmoid network with $w=n(m+2)$ units and lattice constant $L=1$ whose zero set $V=\mathcal{Z}(f)$ is a smooth compact hypersurface, with $0$ a regular value of $f$ and
\begin{equation}\label{eq:single-layer-lb}
  \md(V)\;\ge\; 2\Bigl(\tfrac{m-2}{2}\Bigr)^{\!n}\;=\;\Omega(w^n)\qquad(n\text{ fixed}).
\end{equation}
\end{proposition}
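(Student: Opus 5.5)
We build a separable network $f(x)=c_0+\sum_{i=1}^n g(x_i)$, where $g$ is a one-variable sum of $m+2$ sigmoids with integer slopes $\pm1$, so that $w=n(m+2)$ and $L=1$.

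\textbf{Construction of $g$.} Two steep-looking walls come from $-D\sigma(x+R)$ and $-D\sigma(-x+R)$. These are unit-slope sigmoids, so the walls are not literally steep; instead we take the amplitude $D$ large, which makes $g\to-\infty$ in effect outside $[-R,R]$ relative to the interior. In the interior, $m$ sigmoids $(-1)^j\,\delta\,\sigma(x-t_j)$ with well-separated centres $t_j=jT$ create an oscillating profile. The sum of alternating-sign bumps gives a function whose derivative changes sign about $m$ times, so $g$ has roughly $m/2$ strict local maxima in the interior with values above a threshold.

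Concretely, we want $g$ to satisfy three properties:
\begin{enumerate}
\item $g\to -D$ (very negative) as $|x|\to\infty$.
\item $g$ has at least $(m-2)/2$ nondegenerate local maxima $p_1,\dots$, all with $g(p_r)>\theta$, where $\theta$ is a suitable threshold.
\item $g$ is bounded above, with all critical values nondegenerate.
\end{enumerate}

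\textbf{Choice of $c_0$ and compactness.} We choose $c_0=-n\theta'$ with $\theta'$ slightly below the local maxima, and generic so that $0$ is a regular value of $f$ (by Sard). Since each $g(x_i)\le M$ and $g(x_i)$ is very negative for $|x_i|$ large, the set $\{f\ge 0\}$ is bounded. Hence $V$ is a smooth compact hypersurface bounding the open region $\{f>0\}$.

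\textbf{Counting components.} The product structure gives many components. Near each grid point $(p_{r_1},\dots,p_{r_n})$ with each $p_{r_i}$ a local max, $f$ has a strict local max exceeding $0$. The aim is to show that the superlevel set $\{f>0\}$ has a separate connected component around each of these $((m-2)/2)^n$ maxima. We arrange the valleys between consecutive maxima of $g$ to be deep, namely $g(\text{valley})<\theta'-n(M-\theta')$. Then along any path from one grid max to another, some coordinate crosses a valley, and there $f\le (n-1)M+g(\text{valley})+c_0<0$. So the components are distinct.

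\textbf{Lower bound on the degree.} Each component $W_r$ is bounded with smooth boundary $\Sigma_r\subseteq V$. By Lemma~\ref{lem:bounded-gauss-surjective}, each $\gamma_{\Sigma_r}$ has fibre size at least $2$ for every $v$, so $|\gamma_V^{-1}(v)|\ge 2((m-2)/2)^n$ for regular $v$, which is \eqref{eq:single-layer-lb}. Equality $w=n(m+2)$ and $m\sim w/n$ then give $\Omega(w^n)$.

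\textbf{Main obstacle.} The hardest step is the quantitative design of $g$ with unit slopes. Deep valleys relative to peak heights must be achieved purely through amplitudes $d_k$ and spacing $T$, and we cannot rescale slopes without changing $L$. We would first try spacing $T$ large, so that each interior sigmoid is essentially a step. We would then realise alternating step heights $+H,-H',+H,\dots$ with $H'\gg nH$, so that consecutive plateaus alternate between high and low. Each high plateau then contributes a local maximum; the smooth transitions give the nondegeneracy. The count $(m-2)/2$ accounts for losing boundary steps.
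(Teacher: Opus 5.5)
Your proposal follows essentially the paper's proof: a separable sum of one alternating-sign sigmoid profile with two capping units per coordinate, components of $\{f>0\}$ indexed by a grid of high plateaus, and Lemma~\ref{lem:bounded-gauss-surjective}. Your unit-slope, large-spacing design is exactly the paper's final network $f(x)=f_\lambda(x/\lambda)$, and your path/intermediate-value separation argument and Sard choice of $c_0$ are valid substitutes for the paper's enclosing boxes and finite critical set. Fix two points: as written, $-D\sigma(x+R)-D\sigma(-x+R)$ is $\approx-2D$ on $(-R,R)$ and $\approx-D$ outside, the reverse of what you need, so use $-D\sigma(x-R)-D\sigma(-x-R)$; and read your $+H,-H'$ as plateau levels, i.e.\ keep the jumps of equal size $(-1)^j\delta$, since unequal jumps make the plateau levels drift and destroy the deep-valley condition.
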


\begin{proof}
For a scaling parameter $\lambda\ge 1$ to be fixed below, consider the separable function
\begin{equation*}
  f_\lambda(x)=c_0+\sum_{j=1}^n \hat h_\lambda(x_j),\qquad
  \hat h_\lambda(t):=h_\lambda(t)-2\sigma\bigl(\lambda(t-(m+1))\bigr)-2\sigma(-\lambda t),
\end{equation*}
where $h_\lambda(t):=\sum_{k=1}^m(-1)^k\,\sigma\bigl(\lambda(t-k)\bigr)$ and $c_0=\tfrac12+\varepsilon$, with $\varepsilon\in(0,\tfrac14)$ a generic small number fixed below. Each coordinate thus contributes $m$ alternating units together with two ``capping'' units, whose role is to make $f_\lambda$ negative far away from the box $[0,m+1]^n$; in total, $f_\lambda$ is a single-hidden-layer sigmoid network with $w=n(m+2)$ units and axis-aligned weights $\pm\lambda e_j$. The steepening parameter $\lambda$ will be removed at the end of the proof by a homothety, yielding a network with $q=1$ and $L=1$.

We first argue that $V=\mathcal{Z}(f_\lambda)$ is compact for every $\lambda>0$. Pairing consecutive terms and using that $\sigma$ is strictly increasing,
\begin{equation*}
  h_\lambda(t)=\sum_{i=1}^{m/2}\Bigl(\sigma\bigl(\lambda(t-2i)\bigr)-\sigma\bigl(\lambda(t-2i+1)\bigr)\Bigr)<0,
\end{equation*}
so $\hat h_\lambda<0$ on all of $\R$. Since $m$ is even, $\sum_{k=1}^m(-1)^k=0$, so $h_\lambda(t)\to 0$ as $t\to\pm\infty$, while the two capping terms tend to $-2$ as $t\to+\infty$ and as $t\to-\infty$, respectively; hence $\hat h_\lambda(t)\to-2$ as $|t|\to\infty$. Choose $R>0$ with $\hat h_\lambda(t)\le -1$ for $|t|\ge R$. If $\|x\|_\infty\ge R$ then, bounding the contributions of the remaining coordinates by $\hat h_\lambda<0$, we get $f_\lambda(x)\le c_0-1<0$. Thus $V=\mathcal{Z}(f_\lambda)\subset[-R,R]^n$ is compact.

As $\lambda\to\infty$, $\hat h_\lambda$ converges pointwise, off the thresholds $\{0,1,\dots,m+1\}$, to the step function taking the value $S_i:=\sum_{k\le i}(-1)^k$ on $(i,i+1)$ for $i\in\{0,\dots,m\}$, and the value $-2$ on $(-\infty,0)$ and on $(m+1,\infty)$. Here $S_i=0$ for $i$ even and $S_i=-1$ for $i$ odd. Consequently, the pointwise limit $g$ of $f_\lambda$ is constant on every cell of the corresponding product decomposition of $\R^n$: on a cell $\prod_j(i_j,i_j+1)$ with all $i_j\in\{0,\dots,m\}$ it equals $\tfrac12+\varepsilon-\#\{j:i_j\text{ odd}\}$, which is positive if and only if every $i_j$ is even, and on every cell with some coordinate in $(-\infty,0)$ or $(m+1,\infty)$ it is at most $\tfrac12+\varepsilon-2<0$. Hence $g=\tfrac12+\varepsilon$ on the all-even cells and $g\le-\tfrac12+\varepsilon<0$ on all other cells, including all unbounded ones. Moreover, the all-even cells are pairwise non-adjacent, separated by slabs on which $g<0$.

Consider the all-even cells with $i_j\in\{2,4,\dots,m-2\}$ for every $j$; there are exactly $N:=\bigl(\tfrac{m-2}{2}\bigr)^n$ of them. Fix one such cell, with closed core $K=\prod_j[i_j+\tfrac14,\,i_j+\tfrac34]$ (where the limit $g$ equals $\tfrac12+\varepsilon$). The boundary $\partial R$ of the enlarged box $R=\prod_j[i_j-\tfrac12,\,i_j+\tfrac32]$ lies in cells with $g\le-\tfrac12+\varepsilon$. By uniform convergence off the thresholds there is $\lambda_0=\lambda_0(n,m)$ such that for all $\lambda\ge\lambda_0$ one has $f_\lambda>\tfrac14$ on every such $K$ and $f_\lambda<-\tfrac14$ on every $\partial R$. Consequently, the connected component of $\{f_\lambda>0\}$ containing $K$ is contained in the interior of $R$, and the $N$ such regions $W_1,\dots,W_N$ are pairwise disjoint. Their boundaries $\Sigma_i=\partial W_i$ are $N$ pairwise disjoint compact subsets of $V$.

\begin{figure}[h]
  \centering
  \includegraphics[width=0.4\textwidth]{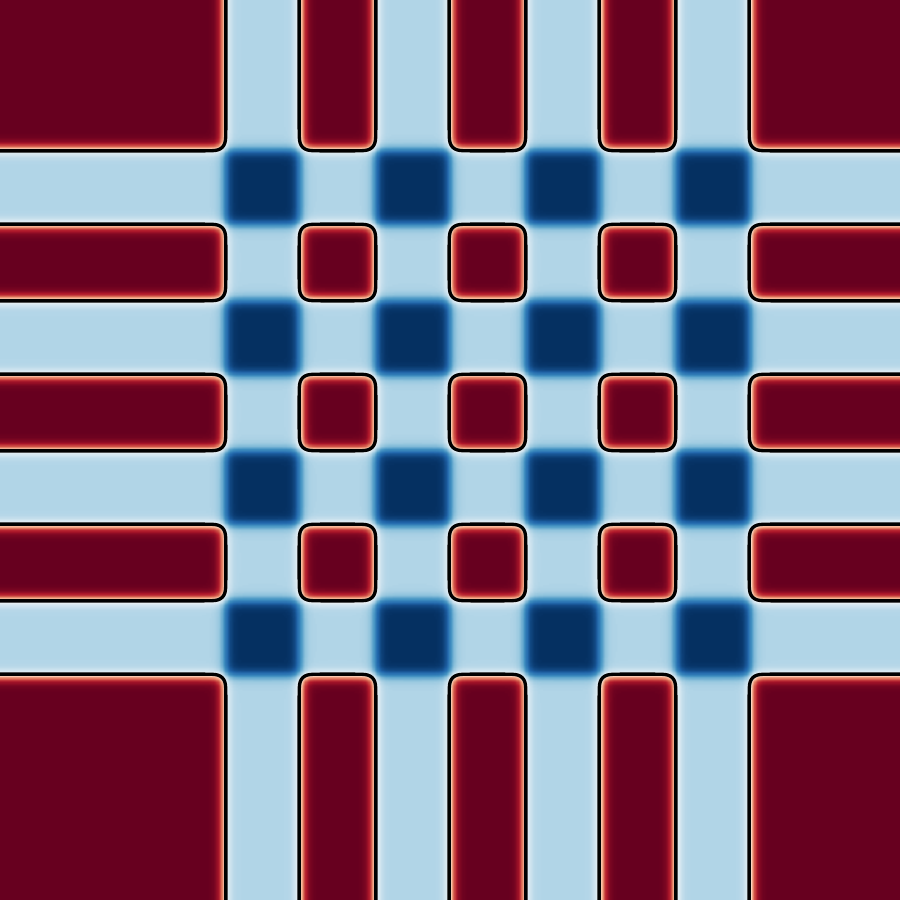}
  \caption{Visualization of the lower-bound construction in Proposition~\ref{prop:single-layer-lowerbound} for $n=2$, shown without the capping units: the function plotted is $c_0+h_\lambda(x_1)+h_\lambda(x_2)$. The black lines indicate its zero set and the red regions are the regions where it is positive. The unbounded components visible towards the boundary of the picture are removed by the capping units.}
  \label{fig:lower-bound-construction}
\end{figure}

Since $\nabla f_\lambda=(\hat h_\lambda'(x_1),\dots,\hat h_\lambda'(x_n))$, the critical set of $f_\lambda$ is the product $\prod_j\{\hat h_\lambda'=0\}$. The set $\{\hat h_\lambda'=0\}$ is finite: multiplying $\hat h_\lambda'$ by the product of the squares of the (strictly positive) sigmoid denominators turns it into an exponential polynomial, non-trivial because $\hat h_\lambda$ is non-constant, which has finitely many zeros by Lemma~\ref{lem:polya} below. The image of the critical set under $f_\lambda$ is therefore a finite set of critical values. Choosing $\varepsilon$ outside this finite set makes $0$ a regular value of $f_\lambda$, so $V$ is a smooth hypersurface and each $\Sigma_i$ is a smooth compact hypersurface, possibly disconnected, contained in $V$ and bounding $W_i$.

Each $\Sigma_i=\partial W_i$ satisfies the hypotheses of Lemma~\ref{lem:bounded-gauss-surjective}, so for every regular value $v$ of $\gamma_V$ we have $|\gamma_V^{-1}(v)|\ge\sum_{i=1}^N|\gamma_{\Sigma_i}^{-1}(v)|\ge 2N$, the preimages being disjoint because the hypersurfaces $\Sigma_i$ are pairwise disjoint. Hence $\md(V)=\md\gamma_V\ge 2N=2\bigl(\tfrac{m-2}{2}\bigr)^n$, which with $m=w/n-2$ is $\Omega(w^n)$ for fixed $n$.

Finally, we remove the steepening. The function $f(x):=f_\lambda(x/\lambda)$ is implemented by a single-hidden-layer sigmoid network with the same $w=n(m+2)$ units: explicitly,
\begin{equation*}
  f(x)=c_0+\sum_{j=1}^n\Bigl(\sum_{k=1}^m(-1)^k\,\sigma(x_j-\lambda k)-2\sigma\bigl(x_j-\lambda(m+1)\bigr)-2\sigma(-x_j)\Bigr),
\end{equation*}
with weights $a\in\{\pm e_1,\dots,\pm e_n\}$, so that $q=1$ and $L=1$ independently of $w$ and $\lambda$. Its zero set is $\mathcal{Z}(f)=\lambda V$, and $0$ is a regular value of $f$ because it is one of $f_\lambda$. The homothety $x\mapsto\lambda x$ leaves tangent and normal spaces unchanged, $T_{\lambda x}(\lambda V)=T_xV$, so $(x,u)\mapsto(\lambda x,u)$ identifies $S(NV)$ with $S(N(\lambda V))$ and the two generalised Gauss maps have the same fibres. Hence $\mathcal{Z}(f)=\lambda V$ is a smooth compact hypersurface with $\md(\lambda V)=\md(V)\ge 2\bigl(\tfrac{m-2}{2}\bigr)^n$, proving~\eqref{eq:single-layer-lb}.
\end{proof}


\begin{remark}\label{rem:why-not-fewnomials}
One might hope to sharpen~\eqref{eq:single-layer-bound} using a real-fewnomial bound in place of BKK, since the system~\eqref{eq:laurent-system}--\eqref{eq:laurent-system-j} has highly structured support. However, fewnomial bounds such as the Bihan--Sottile bound~\cite{bihansottile2007} are controlled by the number of monomials $|A|$, not by $\operatorname{Vol}(\conv(A))$.
\end{remark}

For networks with more than one hidden layer, we expect the single-layer bound of Proposition~\ref{prop:single-layer-degree} to extend to a fully polynomial bound in each layer width, as recorded in the conjecture below.

\begin{conjecture}\label{conj:multi-layer-degree}
  Let $F\colon \R^n \to \R^m$ be a fully connected neural network classifier with $\ell$ hidden layers of widths $n_1,\ldots,n_\ell \geq n$, logistic sigmoid activation at each hidden layer, and a linear output layer. Assume that the weight matrices $A^{(1)},\ldots,A^{(\ell)}$ are rational with common denominator $q$ and in general position, and let $L=q\max_{k,i,j}|a_{ki}^{(j)}|$ be the lattice constant, where the $a_{ki}^{(j)}$ are the entries of the $j$-th layer's weight matrix. Then each pairwise decision boundary $V_{ij} = \mathcal{Z}(F_i-F_j)$, if bounded and smooth, satisfies
  \begin{equation}\label{eq:multi-layer-bound}
    \md(V_{ij}) \leq C(n,L)\cdot \left(\prod_{i=1}^\ell n_i\right)^n,
  \end{equation}
  where $C(n,L)$ depends only on $n$ and $L$.
\end{conjecture}

The conjecture is supported by two observations. The single-layer case $\ell = 1$ is Proposition~\ref{prop:single-layer-degree}, where the exponent $n$ is sharp by Proposition~\ref{prop:single-layer-lowerbound}. For piecewise-linear (ReLU) networks, which lie outside the Pfaffian framework but share the layered composition structure, the number of linear regions (and hence the number of affine pieces making up the decision boundary) is bounded by $O(\prod_j n_j^{\,n})$~\cite{raghu2017,serra2018,hanin2019}, matching the form of~\eqref{eq:multi-layer-bound}.


\subsection{Tubular neighbourhood bound for neural networks}\label{subsec:tube-nn}
Throughout this subsection and Section~\ref{sec:robustness}, the radius of an ambient ball is denoted by $\rho$. The symbol $\sigma$ is reserved for the logistic sigmoid or, in Gaussian estimates, the standard deviation.
The tube formula requires the full family of $i$-th maximal degrees $\md_i(V)$, $0\le i\le n-1$, each a supremum of Gauss-map degrees over the $(i+1)$-dimensional affine sections $V\cap H$, $H\in\cE^n_{i+1}$. The Bernstein argument of Proposition~\ref{prop:single-layer-degree} controls only the top degree $\md_{n-1}(V)=\md(V)$: its proof fixes a \emph{rational} regular value and keeps the integer vectors $\vec p_k=qa_k$ intact, but a generic section forces a rotation of the weights to $U^{\trans}a_k$, which is irrational, destroying the lattice underlying the zonotope $\conv(A)$. Since $\md_i$ is a supremum over \emph{all} sections, $H$ cannot be chosen rationally, and the Bernstein bound does not extend to the lower sections.
We instead bound all section degrees uniformly by passing to a multiplicative chart. In that chart the sigmoid terms become rational functions, while the affine section is encoded by logarithms; the resulting Pfaffian chain has length governed by the ambient dimension rather than by the width.

\begin{proposition}\label{prop:single-layer-ideg}
  Let $f\colon\R^n\to\R$ and $V = \mathcal{Z}(f)$ be as in Proposition~\ref{prop:single-layer-degree}, with lattice constant $L=q\max_{k,i}|a_{ki}|$. Then there is a constant $K(n,L)$, depending only on $n$ and $L$, such that
  \begin{equation}\label{eq:single-layer-ideg}
    \md_i(V) \;\leq\; K(n,L)\,w^{2n}\qquad \text{ for } \qquad i\in \{0,\dots,n-1\}.
  \end{equation}
\end{proposition}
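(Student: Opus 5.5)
Fix $i\in\{0,\dots,n-1\}$ and a section $H\in\cE^n_{i+1}$ meeting $V$ transversally. Parametrise $H$ affinely as $x=x_0+Ut$ with $t\in\R^{i+1}$ and $U\in\R^{n\times(i+1)}$ having orthonormal columns. The restriction is
\[
g(t)=f(x_0+Ut)=c_0+\sum_k d_k\sigma(\tilde a_k^{\trans}t+\tilde b_k),\qquad \tilde a_k=U^{\trans}a_k .
\]
As in the proof of Proposition~\ref{prop:pfaffian-degree-bound} (or Lemma~\ref{le:gauss-nondegenerate} applied to $g$ on $\R^{i+1}$), $\md(V,H)$ is bounded by the number of non-degenerate solutions of the Gauss system $g=0$, $v_{i+1}\partial_jg-v_j\partial_{i+1}g=0$ for $j\le i$, at a generic regular value $v$. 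By Sard these solutions are non-degenerate for generic $v$. The weights $\tilde a_k$ are irrational, so BKK is unavailable.

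The plan is to move the system back to the ambient multiplicative chart. Set $y_j=\econst^{-x_j/q}$ for $j\in[n]$, so that $u_k=\econst^{-b_k}y^{\vec p_k}$ as in Proposition~\ref{prop:single-layer-degree}. I will not work in $t$ but in $(t,y)\in\R^{i+1}\times\R_{>0}^n$, with the $n$ additional constraints
\[
\log y_j=-(x_0+Ut)_j/q .
\]
In these variables the sigmoid terms and their $t$-derivatives are rational in $y$: $\partial_{t_j}g=\sum_k d_k(\tilde a_k)_j u_k(1+u_k)^{-2}$. Multiplying by $\Phi(y)=\prod_k(1+u_k)^2$ clears denominators, and the resulting $i+1$ equations are polynomial in $(t,y)$.

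To control degrees, homogenise the negative exponents. Write $y^{\vec p_k}$ with $\vec p_k\in[-L,L]^n$ as $y^{\vec p_k+L\mathbf 1}\prod_j y_j^{-L}$, and multiply by $\prod_j y_j^{2Lw}$. The equations then become genuine polynomials in $y$ of degree at most $4nLw$, with polynomial dependence on $t$ of degree at most $1$. The remaining $n$ equations $\log y_j+(x_0+Ut)_j/q=0$ are Pfaffian in the chain $(1/y_1,\dots,1/y_n,\log y_1,\dots,\log y_n)$. This chain has length $s=2n$, format $\alpha=2$, and depends only on the $n$ variables $y$. Since $s$ does not depend on $w$, Theorem~\ref{thm:khovanskii} applies to the square system of $(i+1)+n$ equations in $(i+1)+n$ unknowns. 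The logarithm equations have degree $\beta=1$. The $i+1$ polynomial equations have degree $O(nLw)$. The bound is therefore
\[
2^{n(2n-1)}\,(O(nLw))^{i+1}\,\bigl(O(n^2Lw)\bigr)^{2n}.
\]
Non-degeneracy transfers because the map $t\mapsto(t,y(t))$ is a graph embedding and the added equations have invertible Jacobian in $y$.

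This gives $w^{i+1+2n}$, which is worse than the target $w^{2n}$. The main obstacle is therefore sharpening the exponent, and there are two repairs to try. First, treat the logarithms via the identity $y_j=\econst^{\ell_j(t)}$, making the chain $(\econst^{\ell_1(t)},\dots,\econst^{\ell_n(t)})$ in the variables $t$ only. This gives a chain of length $n$ with $k=i+1$ and $\alpha=1$, and the bound becomes $(nLw)^{i+1}(O(nLw))^{n}\le K w^{2n}$ because $i+1\le n$. I would adopt this version, since the equations are polynomials of degree $O(nLw)$ in the chain entries and the count is directly $2^{n(n-1)/2}\beta^{i+1}(\,(i+1)\beta+\alpha\min\{i+1,n\}\,)^n$.

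Second, the $\min\{k,s\}$ refinement of Theorem~\ref{thm:khovanskii} gives a factor $(\ldots+\min\{i+1,n\})^{n}$ rather than anything depending on $w$. The constant $K(n,L)$ then collects $2^{n(n-1)/2}$, the powers of $nL$, and the $n!$-type factors, uniformly in $H$. Taking the supremum over $H$ gives the claim for every $i$. For $i=n-1$ the claim is also consistent with Proposition~\ref{prop:single-layer-degree}.
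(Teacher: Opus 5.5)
Your adopted argument (the second version) is correct, but it takes a different route from the paper's proof.

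\textbf{The paper's route.} The paper stays in the ambient chart $y\in\R_{>0}^n$. There the $i+1$ Gauss equations become genuine polynomials of degree at most $4nLw$ in $y$. The section $H$ is imposed by the $n-i-1$ transcendental equations $\sum_j\omega_{sj}\log y_j+\tfrac1q\langle x_0,\omega_s\rangle=0$. The chain $(y_1^{-1},\dots,y_n^{-1},\sum_j\omega_{1j}\log y_j,\dots)$ therefore has length $2n-i-1$, and Khovanskii's theorem gives the exponent $(i+1)+(2n-i-1)=2n$ with prefactor $2^{\binom{2n-i-1}{2}}$.

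\textbf{Your route.} You parametrise $H\cong\R^{i+1}$ intrinsically and use as chain the $n$ exponentials of affine functions $E_j(t)=\econst^{-(x_0+Ut)_j/q}$, with $\alpha=1$, $s=n$, $k=i+1$. This is the classical fewnomial setting. Theorem~\ref{thm:khovanskii} then bounds the number of solutions by $2^{\binom n2}\beta_F^{\,i+1}\bigl((i+1)\beta_F+1\bigr)^n$ with $\beta_F\le 4nLw$, i.e.\ $O(w^{n+i+1})$.

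\textbf{What each route buys.} Your bound is at most $w^{2n}$ for every $i$ and strictly sharper for $i<n-1$. For $i=0$ it is $O(w^{n+1})$, close to the line bound of Lemma~\ref{lem:line-intersection}. It also has a smaller width-independent Khovanskii factor, and it counts $|\gamma^{-1}(v)|$ exactly rather than losing the paper's factor $2$. Non-degeneracy comes directly from Lemma~\ref{le:gauss-nondegenerate} applied to $g$ on $\R^{i+1}$, instead of a Sard argument for the ambient system. The paper's formulation mainly buys continuity with the chart of Proposition~\ref{prop:single-layer-degree}; yours is simpler and sharper.

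\textbf{For the write-up.}
\begin{itemize}
\item Drop the abandoned $(t,y)$ attempt.
\item Drop the ``second repair''. It is not an independent fix: the base of the $n$-th power still contains $(i+1)\beta_F=O(w)$, and that is where the extra $w^n$ comes from.
\item State that transversality gives $\nabla g=U^{\trans}\nabla f\neq0$ on $V\cap H$, so the lemma applies.
\item State that a maximising regular value with $v_{i+1}\neq0$ exists, because regular values form an open set on which fibre cardinality is locally constant.
\item State that multiplying each equation by the positive factor $\prod_k(1+u_k)^2\prod_j E_j^{2Lw}$ preserves both the zero set and non-degeneracy.
\end{itemize}
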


\begin{proof}
  Fix $i$ with $0\le i\le n-1$ and an affine subspace $H=x_0+\vec H \in\cE^n_{i+1}$ meeting $V$ transversally, with $\vec H\cong \R^{i+1}$ linear, so that $W:=V\cap H$ is a smooth hypersurface of $H$. Let $v\in S^{i}\subset\vec H$ be a regular value of the Gauss map of $W$.
  Choose an orthonormal basis $\omega_1,\dots,\omega_{n-i-1}$ of the orthogonal complement $\vec H^\perp$, so that
  \[
    H=\{x\in\R^n:\ \langle x-x_0,\omega_s\rangle=0,\ s\in \{1,\dots,n-i-1\}\},
  \]
   and let $\tau_1,\dots,\tau_i$ be an orthonormal basis of $\vec H\cap v^\perp$. A point $x\in W$ lies in the Gauss-map fibre over $\pm v$ exactly when the tangential projection of the gradient $\nabla f(x)$ is parallel to $v$, that is, when $\langle\nabla f(x),\tau_j\rangle=0$ for all $j$. Hence $\md(V,H)\le 2N$, where $N$ is the number of $x\in\R^n$ solving the square system of $n$ equations
  \begin{align}
    \begin{split}\label{eq:section-gauss-ambient}
    f(x)&=0,\\
    \langle\nabla f(x),\tau_j\rangle&=0,\ \ j\in \{1,\dots,i\},\\
    \langle x-x_0,\omega_s\rangle&=0,\  \ s\in \{1,\dots,n-i-1\}.
    \end{split}
  \end{align}

  Consider the diffeomorphism $\R^n\xrightarrow{\sim}\R_{>0}^n$, $x\mapsto y$ with $y_j=\econst^{-x_j/q}$. The inverse is given by $x_j=-q\log y_j$. Writing $\vec p_k=qa_k\in\Z^n$, $\|\vec p_k\|_\infty\le L$, we have $\econst^{-(a_k^{\trans}x+b_k)}=\econst^{-b_k}y^{\vec p_k}$, so
  \[
    \sigma(a_k^{\trans}x+b_k)=\frac{1}{1+\econst^{-b_k}y^{\vec p_k}},\qquad
    \sigma'(a_k^{\trans}x+b_k)=\frac{\econst^{-b_k}y^{\vec p_k}}{(1+\econst^{-b_k}y^{\vec p_k})^2}
  \]
  are rational functions of $y$. Clearing denominators in the first $i+1$ equations of~\eqref{eq:section-gauss-ambient} by the positive factors $\prod_k(1+\econst^{-b_k}y^{\vec p_k})$, respectively its square, and by a monomial $y^{(2Lw,\dots,2Lw)}$ to remove negative exponents, those equations become polynomial equations
  \[
    F_0(y)=0,\ F_1(y)=0,\ \dots,\ F_i(y)=0
  \]
  in $y$, of total degree at most $4nLw$, with the same solutions in $\R_{>0}^n$. The last $n-i-1$ affine constraints in~\eqref{eq:section-gauss-ambient} become
  \[
    G_s(y):=\sum_{j=1}^n\omega_{sj}\log y_j+\tfrac1q\langle x_0,\omega_s\rangle=0, \qquad s\in \{1,\dots,n-i-1\}.
  \]
  Thus $N$ equals the number of solutions in $\R_{>0}^n$ of the square system $(F_0,\dots,F_i,G_1,\dots,G_{n-i-1})$, which is Pfaffian with chain
  \[
    \boldq=\Bigl(y_1^{-1},\dots,y_n^{-1},\ \textstyle\sum_j\omega_{1j}\log y_j,\ \dots,\ \sum_j\omega_{n-i-1,j}\log y_j\Bigr).
  \]
  Indeed $\partial_{y_l}(y_j^{-1})=-\delta_{jl}(y_j^{-1})^2$ and $\partial_{y_l}\bigl(\sum_j\omega_{sj}\log y_j\bigr)=\omega_{sl}\,y_l^{-1}$ are polynomials of degree $\alpha=2$ in the chain, the chain is triangular, and it depends on all $k=n$ variables. Note that, after the substitution, the $w$ sigmoids have become rational and contribute nothing to the chain: the only transcendental functions are the $\le n$ logarithms encoding the affine section $H$, so the chain length is $s=2n-i-1\le 2n$, independently of the width~$w$.

  We can now apply Khovanskii's bound, Theorem~\ref{thm:khovanskii}, to the resulting system of equations. In contrast with Proposition~\ref{prop:single-layer-degree}, here the direction $v$ is not constrained to be rational: by Sard's theorem, for a generic regular value $v$ of the Gauss map of $W$ all solutions of~\eqref{eq:section-gauss-ambient} are non-degenerate, and since the fibre cardinality is locally constant on the set of regular values, $\md\gamma_W$ is attained at such $v$. We may therefore assume that all solutions are non-degenerate.
  The polynomials $F_0,\dots,F_i$ have degree $\beta_F\le 4nLw$, while $G_1,\dots,G_{n-i-1}$ have degree $1$. Their sum of degrees is $\sum_r\beta_r=(i+1)\beta_F+(n-i-1)$, and since $\min\{k,s\}=\min\{n,2n-i-1\}=n$ the base term of the Khovanskii bound is $\sum_r\beta_r-n+\min\{k,s\}\alpha+1=(i+1)\beta_F+2n-i$. Khovanskii's theorem~\ref{thm:khovanskii}, applied with domain $\cU=\R_{>0}^n$ (on which the chain $\boldq$ is defined), these component-wise formats, $\alpha=2$, and chain length $s=2n-i-1$, therefore gives
  \[
    N\ \le\ 2^{\binom{s}{2}}\,\beta_F^{\,i+1}\bigl((i+1)\beta_F+2n-i\bigr)^{s}
    \ \le\ 2^{\binom{2n}{2}}\,(4nLw)^{i+1}\,\bigl(4n(i+1)Lw+2n\bigr)^{\,2n-i-1}.
  \]
  The exponent of $w$ on the right is $(i+1)+(2n-i-1)=2n$, while the factor $2^{\binom{2n}{2}}$ is independent of $w$. Hence $N\le K(n,L)\,w^{2n}$ for a constant $K(n,L)$ depending only on $n$ and $L$. Therefore $\md(V,H)\le 2N\le 2K(n,L)w^{2n}$, and taking the supremum over $H\in\cE^n_{i+1}$ yields~\eqref{eq:single-layer-ideg}, after absorbing the factor $2$ into $K(n,L)$.
\end{proof}


We now state the main result of this section: the analogue of Theorem~\ref{thm:prob_bound} for the decision boundary of a single-hidden-layer sigmoid network. We restrict to the case $V\subset B(p,\rho)$, which eliminates the sphere-boundary term $\partial M$ from the proof of Theorem~\ref{thm:prob_bound}. This assumption is natural in the robustness setting of Section~\ref{sec:robustness}, where the data distribution is supported in a bounded region and $\rho$ exceeds its diameter. The obstruction to dropping this hypothesis is recorded in Remark~\ref{rem:sphere-obstruction} below.

\begin{theorem}\label{thm:prob_bound_single_layer}
  Let $f\colon\R^n\to\R$ be as in Proposition~\ref{prop:single-layer-degree} and set $V=\mathcal{Z}(f)$. Assume that $\nabla f$ is non-vanishing on $V$ and that $V\subset B(p,\rho)$. Let $X$ be uniformly distributed on $B(p,\rho)$, and let $K(n,L)$ be the constant of Proposition~\ref{prop:single-layer-ideg}. Then
  \begin{equation}\label{eq:single-layer-prob-bound}
    \prob\{d(X,V)\le\varepsilon\}
    \;\leq\; 2\,K(n,L)\,w^{2n}\left[\left(1+\frac{\varepsilon}{\rho}\right)^n - 1\right].
  \end{equation}
\end{theorem}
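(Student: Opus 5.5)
The plan is to follow the proof of Theorem~\ref{thm:prob_bound}, but using the width-polynomial section degrees of Proposition~\ref{prop:single-layer-ideg} in place of the Khovanskii bound~\eqref{eq:ideg}. The hypothesis $V\subset B(p,\rho)$ removes the boundary term. Since $V$ is closed and bounded, it is a compact smooth hypersurface without boundary, because $\nabla f\neq 0$ on $V$. Moreover $V\subset B(p,\rho)$, so we can apply Theorem~\ref{thm:tube_bound} directly to $M=V$ with ambient radius $\rho$. There is no truncation by the sphere $S^{n-1}(p,\rho+\varepsilon)$, no Sard argument on radii, and no limiting step. We simply use
\[
\prob\{d(X,V)\le\varepsilon\}=\frac{\vol\bigl(T(V,\varepsilon)\cap B(p,\rho)\bigr)}{\omega_n\rho^n}\le \frac{\vol T(V,\varepsilon)}{\omega_n\rho^n}.
\]

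Next, Theorem~\ref{thm:tube_bound} with $m=n-1$, $c=1$ gives
\[
\vol T(V,\varepsilon)\le 2\omega_n\rho^n\sum_{i=0}^{n-1}\binom{n}{i+1}\md_i(V)\Bigl(\frac{\varepsilon}{\rho}\Bigr)^{i+1}.
\]
Proposition~\ref{prop:single-layer-ideg} gives the uniform bound $\md_i(V)\le K(n,L)w^{2n}$ for every $0\le i\le n-1$. Its hypotheses are met: $f$ is as in Proposition~\ref{prop:single-layer-degree}, $V$ is smooth and compact, and $\nabla f\ne 0$ on $V$. Substituting and dividing by $\omega_n\rho^n$ yields
\[
\prob\{d(X,V)\le\varepsilon\}\le 2K(n,L)w^{2n}\sum_{j=1}^{n}\binom{n}{j}\Bigl(\frac{\varepsilon}{\rho}\Bigr)^{j},
\]
and the binomial theorem identifies the sum as $(1+\varepsilon/\rho)^n-1$, which is~\eqref{eq:single-layer-prob-bound}.

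The main point to check is the applicability of Theorem~\ref{thm:tube_bound}. It needs $M\subset B(p,\rho)$ with $M$ compact, and it needs the degrees $\md_i(V)$ to be the suprema over all affine sections $H\in\cE^n_{i+1}$, since Proposition~\ref{prop:single-layer-ideg} is stated for exactly these. We must also verify that the degenerate case $i=0$, where sections are lines and $\md_0$ counts intersection points, is covered by Proposition~\ref{prop:single-layer-ideg}; its proof explicitly allows $i=0$. Beyond this bookkeeping the argument is a direct substitution, and the constant is $2K(n,L)$ with no factor $3$. That factor arose in Theorem~\ref{thm:prob_bound} only from the $\partial M$ term, which is absent here.
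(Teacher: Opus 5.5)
Your proposal is correct and is essentially the paper's proof. You apply Theorem~\ref{thm:tube_bound} directly to the compact hypersurface $V\subset B(p,\rho)$ with $c=1$, insert the uniform bound $\md_i(V)\le K(n,L)w^{2n}$ from Proposition~\ref{prop:single-layer-ideg}, collapse the sum via $\sum_{j=1}^n\binom{n}{j}t^j=(1+t)^n-1$, and pass to the probability through $T(V,\varepsilon)\cap B(p,\rho)\subseteq T(V,\varepsilon)$ and division by $\omega_n\rho^n$, exactly as the paper does.
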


\begin{proof}
  Since $V\subset B(p,\rho)$ is compact, Theorem~\ref{thm:tube_bound} applies to $V$ as a codimension-$1$ submanifold inside the ball $B(p,\rho)$, giving
  \begin{equation*}
    \vol T(V,\varepsilon)
    \;\leq\; 2\omega_n\rho^n\sum_{i=0}^{n-1}\binom{n}{i+1}\md_i(V)\left(\frac{\varepsilon}{\rho}\right)^{i+1}.
  \end{equation*}
  Substituting the uniform bound $\md_i(V)\le K(n,L)w^{2n}$ from Proposition~\ref{prop:single-layer-ideg} and using the binomial identity $\sum_{j=1}^n\binom{n}{j}t^j=(1+t)^n-1$ with $t=\varepsilon/\rho$,
  \begin{equation*}
    \vol T(V,\varepsilon)
    \;\leq\; 2\omega_n\rho^n\,K(n,L)w^{2n}\left[\left(1+\frac{\varepsilon}{\rho}\right)^n-1\right].
  \end{equation*}
  The inclusion $T(V,\varepsilon)\cap B(p,\rho)\subseteq T(V,\varepsilon)$ and division by $\vol B(p,\rho)=\omega_n\rho^n$ give~\eqref{eq:single-layer-prob-bound}.
\end{proof}

We now sharpen the leading term in~\eqref{eq:single-layer-prob-bound}. Unlike the higher section degrees, the degree $\md_0(V)$ only involves intersections with affine lines, so it can be bounded by a one-variable zero count.

The proof rests on the following classical fact, which is an easy application of Rolle's theorem.

\begin{lemma}\label{lem:polya}
A non-trivial exponential polynomial $\varphi(t)=\sum_{i=1}^M d_i\econst^{\nu_i t}$ with $M$ pairwise distinct real frequencies has at most $M-1$ real zeros.
\end{lemma}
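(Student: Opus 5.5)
We argue by induction on $M$, using Rolle's theorem together with the usual trick of multiplying by a nowhere-vanishing exponential so that one term becomes constant. This is the standard Pólya--Szegő argument. Note first that we may assume all coefficients $d_i$ are non-zero. If some vanish, we drop those terms; the number of frequencies then drops, and the claimed bound only improves. Since the frequencies are pairwise distinct, the functions $\econst^{\nu_i t}$ are linearly independent (as is clear from their growth rates as $t\to\pm\infty$). Hence a non-trivial $\varphi$ has at least one non-zero coefficient, and after discarding the zero ones we have $M\ge 1$ genuine terms.

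\textbf{Base case $M=1$.} Here $\varphi(t)=d_1\econst^{\nu_1 t}$ with $d_1\neq 0$, which has no real zeros, so the bound $M-1=0$ holds.

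\textbf{Inductive step.} Assume the claim for $M-1$ terms and suppose, for contradiction, that $\varphi$ has at least $M$ distinct real zeros $t_1<\dots<t_M$. Set
\[
\psi(t):=\econst^{-\nu_1 t}\varphi(t)=d_1+\sum_{i=2}^M d_i\econst^{(\nu_i-\nu_1)t}.
\]
The function $\psi$ has exactly the same real zeros as $\varphi$, since the factor $\econst^{-\nu_1 t}$ never vanishes. Applying Rolle's theorem on each interval $(t_r,t_{r+1})$ shows that $\psi'$ has at least $M-1$ distinct real zeros. On the other hand,
\[
\psi'(t)=\sum_{i=2}^M d_i(\nu_i-\nu_1)\econst^{(\nu_i-\nu_1)t}
\]
is an exponential polynomial with $M-1$ pairwise distinct frequencies $\nu_i-\nu_1$. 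Its coefficients $d_i(\nu_i-\nu_1)$ are all non-zero, because $d_i\neq 0$ and $\nu_i\neq\nu_1$ for $i\ge 2$. Thus $\psi'$ is non-trivial, and by the induction hypothesis it has at most $M-2$ real zeros. This contradicts the $M-1$ zeros found above, so $\varphi$ has at most $M-1$ real zeros.

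\textbf{Main point of care.} The only delicate step is ensuring that the derivative is a \emph{non-trivial} exponential polynomial. This is exactly where pairwise distinctness of the frequencies is used: it guarantees that each new coefficient $d_i(\nu_i-\nu_1)$ is non-zero, and that the shifted frequencies $\nu_i-\nu_1$ are again pairwise distinct. Reducing at the start to the case of all-non-zero coefficients makes this bookkeeping automatic.
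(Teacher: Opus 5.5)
Your proof is correct. It is the standard Rolle induction: multiply by $\econst^{-\nu_1 t}$, differentiate to drop one frequency, and apply the induction hypothesis. This is exactly what the paper means when it calls the lemma ``an easy application of Rolle's theorem''; the paper states the result as classical and gives no further proof.
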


\begin{lemma}\label{lem:line-intersection}
Let $a_1,\dots,a_w\in\Q^n$ have common denominator $q\in\N_{>0}$, and set
\[
  L:=q\max_{k,i}|a_{ki}|.
\]
For any constants $c,b_1,\dots,b_w,\gamma_1,\dots,\gamma_w\in\R$, the function
\[
  h(x)=c+\sum_{k=1}^w\gamma_k\,\sigma(a_k^{\trans}x+b_k)
\]
restricted to any affine line $\ell\subset\R^n$ has at most $(2Lw+1)^n-1$ isolated real zeros.
\end{lemma}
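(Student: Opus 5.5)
Parametrise the line as $\ell=\{x_0+tu: t\in\R\}$ with $u\in\R^n$ a unit vector. Then $a_k^{\trans}(x_0+tu)+b_k=\mu_k t+\beta_k$, where $\mu_k=a_k^{\trans}u$ and $\beta_k=a_k^{\trans}x_0+b_k$. Set $g(t)=h(x_0+tu)=c+\sum_k\gamma_k\sigma(\mu_kt+\beta_k)$. If $g$ vanishes identically, it has no isolated zeros and there is nothing to prove, so assume $g\not\equiv0$.

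The plan is to multiply $g$ by the strictly positive factor $\prod_{k=1}^w(1+\econst^{-(\mu_kt+\beta_k)})$, which does not change the real zeros. This gives
\[
  \varphi(t)=c\prod_{k}(1+\econst^{-\beta_k}\econst^{-\mu_kt})+\sum_k\gamma_k\prod_{l\neq k}(1+\econst^{-\beta_l}\econst^{-\mu_lt}).
\]
Expanding, $\varphi$ is an exponential polynomial in $t$ whose frequencies lie in the set $\{-\sum_{k\in S}\mu_k : S\subseteq[w]\}$. The key observation is that $\mu_k=a_k^{\trans}u=\tfrac1q\,\vec p_k^{\trans}u$ with $\vec p_k=qa_k\in\Z^n$. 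So every frequency has the form $-\tfrac1q\,m^{\trans}u$ with $m=\sum_{k\in S}\vec p_k\in\Z^n$ and $\|m\|_\infty\le Lw$.

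Distinct subsets may produce the same integer vector $m$, so we group the terms by $m$. The number of distinct frequencies $M$ is then at most the number of integer points in the cube $[-Lw,Lw]^n$, which is $(2Lw+1)^n$. Different vectors $m$ may also give the same frequency $m^{\trans}u$; merging these further only decreases $M$.

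After combining terms with equal frequency, $\varphi$ is an exponential polynomial with at most $(2Lw+1)^n$ pairwise distinct real frequencies. It is non-trivial because $g\not\equiv0$ and the multiplier is positive. Lemma~\ref{lem:polya} then bounds its number of real zeros by $(2Lw+1)^n-1$. These are exactly the zeros of $g$, and in particular bound its isolated zeros.

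The main subtlety is the degenerate case, where cancellation after merging frequencies could make $\varphi$ identically zero. This happens precisely when $g\equiv0$ on $\ell$, and that case has been excluded. The only other point to check is the lattice count itself: bounding $\|m\|_\infty$ by $|S|\cdot L\le wL$ coordinatewise gives the stated count.
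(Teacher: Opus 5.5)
Your proposal is correct and follows essentially the same route as the paper: multiply by the positive sigmoid denominators along the line to get an exponential polynomial, index its frequencies by the lattice vectors $\sum_{k\in S} q a_k$ with coordinates in $[-Lw,Lw]$, and apply Lemma~\ref{lem:polya}, treating the case $h|_\ell\equiv 0$ separately. Your remark that, after merging equal frequencies, all coefficients can cancel only when $h|_\ell\equiv 0$ is the same point the paper's proof relies on.
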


\begin{proof}
Parametrise $\ell(t)=x_0+tv$, set $\lambda_k=a_k^{\trans}v$ and $\mu_k=a_k^{\trans}x_0+b_k$, and put $u_k(t)=\econst^{-\mu_k}\econst^{-\lambda_k t}$, so $\sigma(a_k^{\trans}\ell(t)+b_k)=(1+u_k)^{-1}$. Multiplying $h(\ell(t))$ by the strictly positive factor $\prod_k(1+u_k(t))$ preserves zeros and gives, after expansion, an exponential polynomial
\[
  G(t)=\sum_{e\in\{0,1\}^w} d_e\,\econst^{-\nu_e t},\qquad \nu_e=\sum_{k=1}^w e_k\lambda_k .
\]
If $G$ is identically zero, then $h|_\ell$ is identically zero and has no isolated zeros. Otherwise, combine terms with the same frequency. Writing $\vec p_k=qa_k\in\Z^n$ with $\|\vec p_k\|_\infty\le L$, we have $\nu_e=\tfrac1q\bigl(\sum_k e_k\vec p_k\bigr)^{\trans}v$, so $\nu_e=\nu_{e'}$ whenever $\sum_k e_k\vec p_k=\sum_k e'_k\vec p_k$ in $\Z^n$. The number of distinct frequencies is therefore at most the number of distinct lattice vectors $\sum_k e_k\vec p_k$. Each coordinate lies in $[-Lw,Lw]$, so there are at most $(2Lw+1)^n$ of them. Lemma~\ref{lem:polya} bounds the zeros of $G$, hence of $h|_\ell$, by $(2Lw+1)^n-1$.
\end{proof}

Under the hypotheses of Theorem~\ref{thm:prob_bound_single_layer}, every transverse affine line section $V\cap\ell$ consists of isolated zeros of $f|_\ell$. Lemma~\ref{lem:line-intersection} therefore gives
\[
  \md_0(V)\le (2Lw+1)^n-1.
\]

\begin{corollary}\label{cor:single-layer-tight-prob-bound}
  Under the hypotheses of Theorem~\ref{thm:prob_bound_single_layer},
  \begin{equation*}
    \prob\{d(X,V)\le\varepsilon\}\le 2n(2Lw+1)^n\frac{\varepsilon}{\rho}+O\left(w^{2n}\left(\frac{\varepsilon}{\rho}\right)^2\right),
  \end{equation*}
  which is of order $O(w^n\,\varepsilon/\rho)$ in the regime $\varepsilon/\rho=O(w^{-n})$, where the leading term dominates.
\end{corollary}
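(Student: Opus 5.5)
The plan is to rerun the proof of Theorem~\ref{thm:prob_bound_single_layer}, but now use a different bound for each term of the tube sum. Since $V\subset B(p,\rho)$ is compact and has no boundary, Theorem~\ref{thm:tube_bound} gives
\begin{equation*}
  \prob\{d(X,V)\le\varepsilon\}\le \frac{\vol T(V,\varepsilon)}{\omega_n\rho^n}\le 2\sum_{i=0}^{n-1}\binom{n}{i+1}\md_i(V)\left(\frac{\varepsilon}{\rho}\right)^{i+1}.
\end{equation*}
We split this sum into the term $i=0$ and the terms $i\ge 1$.

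For the leading term $i=0$, we use the bound stated just before the corollary, namely $\md_0(V)\le (2Lw+1)^n-1$. This comes from Lemma~\ref{lem:line-intersection} applied with $h=f$. One point needs checking here: a transverse line section $V\cap\ell$ really does consist of isolated zeros of $f|_\ell$. This holds because transversality means $\langle\nabla f,v\rangle\neq 0$ at each intersection point, so every such zero is simple. Moreover, $\md_0$ of a zero-dimensional section is the number of points of the section, or twice that number if both unit normals in the line are counted. We account for this by absorbing the factor $2$ into the constant. The contribution of this term is then at most
\begin{equation*}
  2\binom{n}{1}\bigl((2Lw+1)^n-1\bigr)\frac{\varepsilon}{\rho}\le 2n(2Lw+1)^n\frac{\varepsilon}{\rho}.
\end{equation*}
This is exactly the stated leading term, provided the normalisation of the two normal directions matches the convention in Theorem~\ref{thm:tube_bound}. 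If it does not, the resulting factor of $2$ is simply carried along.

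For the terms $i\ge 1$, we use Proposition~\ref{prop:single-layer-ideg}, which gives $\md_i(V)\le K(n,L)w^{2n}$. Summing over $i\ge 1$ gives
\begin{equation*}
  2K(n,L)w^{2n}\sum_{j=2}^n\binom{n}{j}\left(\frac{\varepsilon}{\rho}\right)^{j}=2K(n,L)w^{2n}\Bigl[(1+\tfrac{\varepsilon}{\rho})^n-1-n\tfrac{\varepsilon}{\rho}\Bigr].
\end{equation*}
The bracket is at most $2^n(\varepsilon/\rho)^2$ when $\varepsilon\le\rho$. When $\varepsilon>\rho$ the probability is trivially at most $1$, and the bound is already dominated by the quadratic term. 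In either case the $i\ge1$ terms are $O\bigl(w^{2n}(\varepsilon/\rho)^2\bigr)$, with an implied constant depending only on $n$ and $L$.

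The final regime statement is arithmetic. If $\varepsilon/\rho\le c\,w^{-n}$, then
\begin{equation*}
  w^{2n}\left(\frac{\varepsilon}{\rho}\right)^2\le c\,w^n\,\frac{\varepsilon}{\rho},
\end{equation*}
so the whole bound is $O(w^n\varepsilon/\rho)$, using $(2Lw+1)^n=O_{n,L}(w^n)$.

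The main obstacle is the step for the leading term: making sure that $\md_0(V)$, in the normal-sphere-bundle convention, is controlled by the count of isolated zeros on lines, including the factor coming from the two unit normals. Everything else is direct substitution into Theorem~\ref{thm:tube_bound}.
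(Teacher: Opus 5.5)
Your proposal is correct and follows the paper's proof essentially verbatim. You split the tube sum from Theorem~\ref{thm:tube_bound} into the $i=0$ term, bounded via $\md_0(V)\le(2Lw+1)^n-1$ from Lemma~\ref{lem:line-intersection}, and the $i\ge1$ terms, bounded via Proposition~\ref{prop:single-layer-ideg} and the binomial identity. Your factor-of-two worry is unfounded: $\md$ is a supremum over single regular values $v\in S^0$, and each point of $V\cap\ell$ contributes exactly one preimage of $v$, so $\md_0$ is just the number of intersection points (as noted before~\eqref{eq:ideg}) and the leading constant $2n(2Lw+1)^n$ holds exactly as stated.
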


\begin{proof}
  By Theorem~\ref{thm:tube_bound} and the inclusion $T(V,\varepsilon)\cap B(p,\rho)\subseteq T(V,\varepsilon)$,
  \[
    \prob\{d(X,V)\le\varepsilon\}\le 2\sum_{i=0}^{n-1}\binom{n}{i+1}\md_i(V)\Bigl(\frac\varepsilon\rho\Bigr)^{i+1}.
  \]
  Bound the $i=0$ term by $2n\md_0(V)(\varepsilon/\rho)\le 2n(2Lw+1)^n(\varepsilon/\rho)$, and the terms $i\ge1$ by $\md_i(V)\le K(n,L)w^{2n}$, so that 
  \begin{equation*}
  2\sum_{i=1}^{n-1}\binom{n}{i+1}K(n,L)w^{2n}\left(\frac{\varepsilon}{\rho}\right)^{i+1}=2K(n,L)w^{2n}\left[\left(1+\frac{\varepsilon}{\rho}\right)^n-1-n\frac{\varepsilon}{\rho}\right]
  \end{equation*} 
  by the binomial identity. The claim follows.
\end{proof}

\begin{remark}[Sphere obstruction]\label{rem:sphere-obstruction}
  The hypothesis $V\subset B(p,\rho)$ in Theorem~\ref{thm:prob_bound_single_layer} eliminates the sphere-boundary contribution $\partial M = V\cap S^{n-1}(p,\rho+\varepsilon)$ that appears in the proof of Theorem~\ref{thm:prob_bound}. Without it, one needs to bound the maximal degrees of $\partial M$, a codimension-$2$ intersection of a Pfaffian hypersurface with a polynomial sphere. The complete-intersection bound~\eqref{eq:ideg} applied to $\partial M$ yields a correction term of order $2^{w(w-1)/2}\cdot\mathrm{poly}(n,w)$, exponential in $w$ and dominating the main term~\eqref{eq:single-layer-prob-bound}. A replacement that is polynomial in $w$ would require either a Pfaffian--algebraic B\'ezout-type inequality of the form $\md(V\cap S)\le\md(V)\cdot\deg(S)$ for Pfaffian hypersurfaces $V$ and polynomial hypersurfaces $S$, or a BKK-style global argument that accommodates mixed Laurent-polynomial and transcendental components: under the exponential substitution of Proposition~\ref{prop:single-layer-degree}, the sphere equation $\|x-p\|^2=r^2$ becomes $\sum_j(q\log y_j+p_j)^2=r^2$, which is transcendental in $y$. This is a weaker form of the layered-Rolle obstruction discussed in Section~\ref{subsec:improved-nn} (Conjecture~\ref{conj:block-khovanskii}).
\end{remark}

\section{Robustness}\label{sec:robustness}
We now apply the tube bounds from the previous sections to the classification setting introduced in Section~\ref{sec:tubular-neural}. Recall that a classifier $F\colon \R^n \to \R^m$ induces decision regions $C_j = \{x\colon g_{ij}(x)\ge 0 \text{ for all } i\neq j\}$, where $g_{ij} = F_j - F_i$, and that the decision boundary is
\begin{equation*}
  \Sigma = \bigcup_{i<j}(C_i\cap C_j),
\end{equation*}
the set of inputs whose top-scoring class is not unique. Writing $V_{ij} = \mathcal{Z}(g_{ij})$ for the pairwise level set, a tie $F_i(x)=F_j(x)$ places $x$ on $\Sigma$ only when classes $i$ and $j$ are \emph{both} maximal at $x$; hence $C_i\cap C_j\subseteq V_{ij}$ and
\begin{equation}\label{eq:sigma-inclusion}
  \Sigma \;\subseteq\; \bigcup_{i<j} V_{ij}.
\end{equation}
The union on the right is in general strictly larger, since it also records coincidences $F_i=F_j$ between non-maximal scores. The bounds below control proximity to this larger union. For $x\in \R^n\setminus \Sigma$, the distance to misclassification is
\begin{equation*}
  \Delta(x) := \dist(x, \Sigma) = \inf\{\|x - y\| \colon y \in \Sigma\}.
\end{equation*}

It is natural to measure the distance to misclassification relative to the size of the input. In analogy to the theory of conditioning in numerical analysis and optimisation~\cite{burgisser2013condition}, we define the condition of a classifier.

\begin{definition}\label{def:condition}
The \emph{condition number} of the classifier $F$ at $x\in \R^n\setminus \Sigma$ is
\begin{equation*}
  \mC(x) := \frac{\|x\|}{\Delta(x)}.
\end{equation*}
More generally, the \emph{local condition number} centred at $p\in \R^n$ is
\begin{equation*}
  \mC_p(x) := \frac{\|x - p\|}{\Delta(x)}.
\end{equation*}
\end{definition}

The condition number $\mC(x)$ is the reciprocal of the relative distance to the decision boundary, so that $\mC(x) > t$ means the classification of $x$ can be changed by a perturbation of relative size less than $1/t$. This is analogous to the role of the condition number in numerical analysis, where the ``ill-posed problems'' correspond to the decision boundary~$\Sigma$.

\begin{remark}[Weak condition number]\label{rem:weak-condition}
For absolutely continuous distributions, the probability that a random perturbation of size exactly $\Delta(x)$ leads to misclassification is zero. For a random direction $v$ uniformly distributed on $\mathbb{S}^{n-1}$, define the directional distance $\Delta(x;v) = \inf\{\eta > 0 \colon \hat{j}(x+\eta v) \neq \hat{j}(x)\}$, where $\hat{j}(x)$ denotes the class of $x$. If $C_{\hat{j}(x)}$ is star-shaped with respect to~$x$, the $(1-\delta)$-quantile of $\|x\|/\Delta(x;v)$ defines a \emph{weak condition number}
\begin{equation*}
  \mC_\delta(x) := \inf\left\{\eta > 0 \colon \prob_{v\in \mathbb{S}^{n-1}}\!\left\{\frac{\|x\|}{\Delta(x;v)} > \eta\right\} \leq \delta\right\},
\end{equation*}
in analogy with the weak condition numbers introduced in~\cite{lotz2020weak}. The weak condition number captures the typical-case rather than worst-case sensitivity of a classification to perturbations.
\end{remark}

\subsection{Tail bounds on the condition number}
We are interested in the probability that the condition number $\mC(X)$ exceeds a threshold~$t$, for $X$ drawn from a probability distribution on the data space. This is an instance of the probabilistic analysis of condition numbers, studied in detail in~\cite{burgisser2013condition}. In what follows, we distinguish two settings:
\begin{enumerate}
\item \emph{Random data:} $X$ is drawn from a distribution on the data space (average-case analysis);
\item \emph{Random perturbation:} $X = \overline{x} + \xi$ for a fixed data point $\overline{x}$ and a random perturbation $\xi$ (smoothed analysis).
\end{enumerate}
The tube bounds of Section~\ref{sec:tubular} apply to both settings.

\begin{theorem}[Uniform tail bound]\label{thm:condition-uniform}
Let $F\colon \R^n \to \R^m$ be a classifier whose pairwise decision boundaries $V_{ij} = \mathcal{Z}(g_{ij})$ are bounded Pfaffian hypersurfaces with format $(\alpha, \beta, s)$, and non-vanishing gradient. Let $X$ be uniformly distributed in $B(p, \rho)$. Then for $t > 0$,
\begin{equation*}
  \prob\{\mC_p(X) > t\} \leq \binom{m}{2}\, C_{\alpha,\beta,s,n}\left[\left(1 + \frac{\alpha + \overline{\beta} + 1}{t}\right)^n-\left(1+\frac1t\right)^n\right],
\end{equation*}
where $C_{\alpha,\beta,s,n}$ and $\overline{\beta}$ are as in Theorem~\ref{thm:prob_bound}.
\end{theorem}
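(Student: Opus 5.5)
The plan is to turn the condition-number event into a tube event whose radius depends on the sample point, and then invoke Theorem~\ref{thm:prob_bound} once for each pair of classes.

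First I rewrite the event. If $\mC_p(X)>t$, then $\Delta(X)<\|X-p\|/t$. On the support of $X$ we have $\|X-p\|\le\rho$, so $\Delta(X)<\rho/t$. (Points lying on $\Sigma$ have measure zero, or can be counted in the event with $\Delta=0$.) Setting $\varepsilon:=\rho/t$, this gives
\[
\{\mC_p(X)>t\}\subseteq\{d(X,\Sigma)\le\varepsilon\}.
\]

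Next I decompose the boundary. By~\eqref{eq:sigma-inclusion}, $\Sigma\subseteq\bigcup_{i<j}V_{ij}$, so $d(X,\Sigma)\ge\min_{i<j}d(X,V_{ij})$. Hence
\[
\{d(X,\Sigma)\le\varepsilon\}\subseteq\bigcup_{i<j}\{d(X,V_{ij})\le\varepsilon\}.
\]
A union bound over the $\binom{m}{2}$ pairs reduces the problem to a single $V_{ij}$.

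For each pair I apply Theorem~\ref{thm:prob_bound} with $f=g_{ij}$. Its hypotheses hold by assumption: $V_{ij}$ is bounded, the gradient is non-vanishing on it, and the format is $(\alpha,\beta,s)$. With $\varepsilon/\rho=1/t$, each term is at most
\[
C_{\alpha,\beta,s,n}\Bigl[\bigl(1+(\alpha+\overline{\beta}+1)/t\bigr)^n-\bigl(1+1/t\bigr)^n\Bigr].
\]
Summing over the pairs gives exactly the stated bound.

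No step is a genuine obstacle. The only point needing care is the first reduction, where $\Delta(X)$ must be compared with a deterministic radius. Using the crude bound $\|X-p\|\le\rho$ is what makes $\varepsilon=\rho/t$ deterministic, so that Theorem~\ref{thm:prob_bound} applies directly. I would also check that a strict versus non-strict inequality does not matter, since the tube is closed and the event only grows.
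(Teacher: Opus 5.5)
Your proposal is correct and follows essentially the same route as the paper's proof. Both bound $\|X-p\|\le\rho$ to obtain the deterministic radius $\varepsilon=\rho/t$, use the inclusion $\Sigma\subseteq\bigcup_{i<j}V_{ij}$ with a union bound over the $\binom{m}{2}$ pairs, and apply Theorem~\ref{thm:prob_bound} to each $V_{ij}$ with $\varepsilon/\rho=1/t$.
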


\begin{proof}
  For $X\in B(p,\rho)$ we have $\|X - p\|\leq \rho$, so the event $\mC_p(X) > t$ implies $\Delta(X) < \|X-p\|/t \leq \rho/t$. By the inclusion~\eqref{eq:sigma-inclusion}, any point within $\rho/t$ of $\Sigma$ is within $\rho/t$ of some $V_{ij}$, so $\{\Delta(X) < \rho/t\} \subseteq \bigcup_{i<j}\{\dist(X, V_{ij}) < \rho/t\}$. By a union bound and Theorem~\ref{thm:prob_bound} applied to each $V_{ij}$ with $\varepsilon = \rho/t$,
\begin{equation*}
  \prob\{\mC_p(X) > t\} \leq \binom{m}{2}\, C_{\alpha,\beta,s,n}\left[\left(1 + \frac{\alpha + \overline{\beta} + 1}{t}\right)^n-\left(1+\frac1t\right)^n\right]. \qedhere
\end{equation*}
\end{proof}


We now turn to the smoothed analysis setting, where the data point is a Gaussian perturbation of a fixed input $\overline{x}$. A Fubini decomposition reduces the Gaussian estimate to the uniform-ball estimate at all radii, yielding the same tail profile for the local condition number.

\begin{theorem}[Gaussian tail bound]\label{thm:condition-gaussian}
Under the hypotheses of Theorem~\ref{thm:condition-uniform}, let $X$ be normally distributed around $\overline{x}\in \R^n$ with covariance $\sigma^2 \mathrm{Id}$. Then for $t > 0$,
\begin{equation*}
\prob\{\mC_{\overline{x}}(X) > t\} \leq \binom{m}{2}\, C_{\alpha,\beta,s,n}\left[\left(1+\frac{\alpha+\overline{\beta}+1}{t}\right)^n - \left(1+\frac{1}{t}\right)^n\right].
\end{equation*}
\end{theorem}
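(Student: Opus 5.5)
The plan is to reduce the Gaussian estimate to the uniform-ball estimate of Theorem~\ref{thm:condition-uniform} (equivalently Theorem~\ref{thm:prob_bound}) by conditioning on the radius. Write $X=\overline{x}+\xi$ with $\xi\sim\mathcal{N}(0,\sigma^2\mathrm{Id})$. The Gaussian density is radial about $\overline{x}$ and decreasing in $\|\xi\|$. Such a density can be written as a mixture of uniform distributions on the balls $B(\overline{x},r)$:
\begin{equation*}
  \varphi_\sigma(\xi)=\int_0^\infty \mu(r)\,\frac{\mathbf{1}\{\|\xi\|\le r\}}{\omega_n r^n}\,\diff{r}.
\end{equation*}
The weight is $\mu(r)=-\omega_n r^n\,\tfrac{\diff}{\diff r}\psi(r)\ge 0$, where $\varphi_\sigma(\xi)=\psi(\|\xi\|)$. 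It is a probability density on $(0,\infty)$: it is nonnegative since $\psi$ is decreasing, and integrating the displayed identity over $\R^n$ shows it has total mass one. This layer-cake (Fubini) decomposition is the key structural step.

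Given the decomposition, Fubini yields
\begin{equation*}
\prob\{\mC_{\overline{x}}(X)>t\}=\int_0^\infty \mu(r)\,\prob_{Y\sim\mathrm{Unif}(B(\overline{x},r))}\{\mC_{\overline{x}}(Y)>t\}\,\diff{r}.
\end{equation*}
For each fixed $r$, apply Theorem~\ref{thm:condition-uniform} with $p=\overline{x}$ and $\rho=r$. The bound it gives,
\begin{equation*}
\binom{m}{2}C_{\alpha,\beta,s,n}\bigl[(1+(\alpha+\overline{\beta}+1)/t)^n-(1+1/t)^n\bigr],
\end{equation*}
is independent of $r$. This scale invariance holds because the condition number is measured relative to $\|Y-\overline{x}\|\le r$, so the tube radius $\varepsilon=r/t$ scales with the ball and the ratio $\varepsilon/\rho=1/t$ stays fixed. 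Integrating against the probability density $\mu$ then gives exactly the stated bound.

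I expect the only delicate point to be the justification of the mixture representation and the measurability and Fubini interchange. Concretely this means checking that $\mu\ge0$ integrates to one, using integration by parts and the Gaussian decay $r^n\psi(r)\to0$. One must also confirm that Theorem~\ref{thm:condition-uniform} holds for every radius $r>0$ with no genericity restriction, which is guaranteed by the continuity argument at the end of the proof of Theorem~\ref{thm:prob_bound}. The null set $\Sigma$, where $\mC$ is undefined, has measure zero and can be ignored.
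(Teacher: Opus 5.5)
Your proposal is correct and follows the paper's route. The paper's Lemma~\ref{lem:gauss-to-uniform} uses exactly this Fubini/layer-cake mixture: your weight $\mu$, after the substitution $r=\sigma s$, is the paper's $\frac{\omega_n}{(2\pi)^{n/2}}s^{n+1}\econst^{-s^2/2}$, and the same scale invariance $\varepsilon/\rho=1/t$ does the work; the only cosmetic difference is that the paper mixes Theorem~\ref{thm:prob_bound} for each $V_{ij}$ and takes the union bound afterwards, whereas you mix Theorem~\ref{thm:condition-uniform}, which already contains the union bound.
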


\begin{remark}
For large $t$, the uniform and Gaussian local bounds both decay as $\binom{m}{2}\, C_{\alpha,\beta,s,n}\cdot n(\alpha+\overline{\beta})/t$.
\end{remark}

The proof of Theorem~\ref{thm:condition-gaussian} relies on a standard reduction from Gaussian to uniform distributions.

\begin{lemma}\label{lem:gauss-to-uniform}
Let $V = \mathcal{Z}(f)$ be a bounded Pfaffian hypersurface with format $(\alpha, \beta, s)$ and non-vanishing gradient. Let $X$ be normally distributed around $\overline{x}\in \R^{n}$ with covariance $\sigma^2 \mathrm{Id}$. Then
\begin{equation*}
\prob\{\|X-\overline{x}\|\cdot \dist(X,V)^{-1}\geq t\} \leq C_{\alpha,\beta,s,n}\left[\left(1+\frac{\alpha+\overline{\beta}+1}{t}\right)^n - \left(1+\frac{1}{t}\right)^n\right].
\end{equation*}
\end{lemma}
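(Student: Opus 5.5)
The plan is to decompose the Gaussian measure into spherical shells around $\overline{x}$ and reduce to the uniform-ball estimate of Theorem~\ref{thm:prob_bound} at every radius. The key observation is that the right-hand side of Theorem~\ref{thm:prob_bound}, evaluated at $\varepsilon=\rho/t$, equals
\[
B(t):=C_{\alpha,\beta,s,n}\left[\left(1+\tfrac{\alpha+\overline{\beta}+1}{t}\right)^n-\left(1+\tfrac1t\right)^n\right],
\]
which does not depend on $\rho$. This is exactly the scale invariance we want to exploit.

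First, for every $\rho>0$ I would show that $X_\rho$ uniform on $B(\overline{x},\rho)$ satisfies $\prob\{\|X_\rho-\overline{x}\|/\dist(X_\rho,V)\ge t\}\le B(t)$. The event implies $\dist(X_\rho,V)\le\|X_\rho-\overline{x}\|/t\le\rho/t$. Theorem~\ref{thm:prob_bound} with $p=\overline{x}$ and $\varepsilon=\rho/t$ then gives the bound.

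Second, I would write the Gaussian density as a mixture of uniform distributions on balls centred at $\overline{x}$. The density $\varphi(\|x-\overline{x}\|)$ is radially decreasing, so the layer-cake formula applies:
\[
\varphi(r)=\int_0^\infty \mathbf 1\{r\le\rho\}\,(-\varphi'(\rho))\,d\rho.
\]
Hence the law of $X$ is $\int_0^\infty \mu_\rho\, w(\rho)\,d\rho$, where $\mu_\rho$ is the uniform law on $B(\overline{x},\rho)$ and $w(\rho)=-\varphi'(\rho)\,\omega_n\rho^n\ge 0$. The weights integrate to $1$, since the total mass of the Gaussian is $1$. By Fubini,
\[
\prob\{\cdot\}=\int_0^\infty \mu_\rho\{\cdot\}\,w(\rho)\,d\rho\le B(t)\int w=B(t).
\]

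The main obstacle is the bookkeeping in the mixture step. I need nonnegativity of $w$, which holds because $\varphi(r)=(2\pi\sigma^2)^{-n/2}e^{-r^2/2\sigma^2}$ is decreasing. I also need to verify that Theorem~\ref{thm:prob_bound} holds for all $\rho$ and not just a dense set; its proof already handles this by continuity. Measurability of the event is routine, since $\dist(\cdot,V)$ is continuous. Finally, the event $\{\dist(X,V)=0\}$ has measure zero, because $V$ is a hypersurface, so the ratio is well defined almost surely.
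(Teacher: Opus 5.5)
Your proposal is correct and follows essentially the same route as the paper. The paper also writes the Gaussian law as a mixture of uniform laws on balls centred at $\overline{x}$: its weight $\frac{\omega_n}{(2\pi)^{n/2}}r^{n+1}e^{-r^2/2}$ at radius $\sigma r$ is exactly your layer-cake weight $-\varphi'(\rho)\,\omega_n\rho^n$ after substituting $\rho=\sigma r$, and it then applies Theorem~\ref{thm:prob_bound} with $\varepsilon=\rho/t$, using that the resulting bound does not depend on the radius.
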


\begin{proof}
Set $A:=\{x\in \R^{n} \mid \|x-\overline{x}\|\cdot \dist(x,V)^{-1}\geq t\}$.
Let $U_{r}$ be uniformly distributed on a closed ball $B(\overline{x},r)$ around $\overline{x}$ of radius $r$, with density
\begin{equation*}
  \frac{1}{\omega_n r^{n}}\cdot \mathbf{1}\{\|x-\overline{x}\|\leq r\},
\end{equation*}
where $\omega_n = \vol B(0,1)$.
For a Gaussian vector $X$ centred at $\overline{x}$ with covariance $\sigma^2\mathrm{Id}$, a standard Fubini argument gives
\begin{equation}\label{eq:gauss-to-uniform}
 \prob\{X\in A\} = \frac{\omega_n}{(2\pi)^{\frac{n}{2}}}\int_{0}^{\infty} \prob\{U_{\sigma r}\in A\}\, r^{n+1} \econst^{-\frac{r^2}{2}} \ \diff{r}.
\end{equation}
Since $U_{\sigma r}\in A$ implies $\dist(U_{\sigma r},V)\leq \sigma r/t$, we can apply Theorem~\ref{thm:prob_bound} with ball radius $\sigma r$ and $\varepsilon = \sigma r/t$. The ratio $\varepsilon/(\sigma r) = 1/t$ is independent of $r$, so
\begin{equation*}
  \prob\{U_{\sigma r}\in A\} \leq C_{\alpha,\beta,s,n}\left[\left(1+\frac{\alpha+\overline{\beta}+1}{t}\right)^n - \left(1+\frac{1}{t}\right)^n\right].
\end{equation*}
This is independent of $r$ and factors out of the integral~\eqref{eq:gauss-to-uniform}. The remaining integral evaluates to
\begin{equation*}
  \frac{\omega_n}{(2\pi)^{\frac{n}{2}}}\int_{0}^{\infty} r^{n+1} \econst^{-\frac{r^2}{2}} \ \diff{r} = 1,
\end{equation*}
which follows from $\omega_n = \pi^{n/2}/\Gamma(n/2+1)$ and the substitution $u=r^2/2$ in the Gamma function.
\end{proof}

\begin{proof}[Proof of Theorem~\ref{thm:condition-gaussian}]
The event $\mC_{\overline{x}}(X) > t$ implies $\dist(X,\Sigma)<\|X-\overline{x}\|/t$. By~\eqref{eq:sigma-inclusion}, this gives
\[
  \dist(X,V_{ij}) < \|X-\overline{x}\|/t
\]
for some pair $i<j$. By a union bound and Lemma~\ref{lem:gauss-to-uniform} applied to each~$V_{ij}$,
\begin{equation*}
  \prob\{\mC_{\overline{x}}(X) > t\} \leq \binom{m}{2}\, C_{\alpha,\beta,s,n}\left[\left(1+\frac{\alpha+\overline{\beta}+1}{t}\right)^n - \left(1+\frac{1}{t}\right)^n\right]. \qedhere
\end{equation*}
\end{proof}

\subsection{Neural network classifiers}
We now specialise the tail bounds to neural network classifiers with Pfaffian activation functions.

\begin{corollary}[Sigmoid/tanh network]\label{cor:nn-condition}
Let $F\colon \R^n\to \R^m$ be a fully connected neural network classifier with $\ell$ hidden layers, $h$ total hidden units, and an activation function with Pfaffian format $(\alpha, \beta, s)$ that is autonomous with $s\geq 1$, and autonomous Pfaffian output function with format $(\alpha', \beta', s')$. Assume the pairwise decision boundaries $V_{ij}$ are bounded with non-vanishing gradient.

If $X$ is uniformly distributed in $B(p,\rho)$, then
\begin{equation*}
  \prob\{\mC_p(X) > t\} \leq \binom{m}{2}\, C_{\alpha_F,\beta_F,s_F,n}\left[\left(1 + \frac{\alpha_F + \overline{\beta}_F + 1}{t}\right)^n-\left(1+\frac1t\right)^n\right],
\end{equation*}
where $\alpha_F = \ell(\alpha+\beta-1) + \alpha'$, $\beta_F = \beta'$, $s_F = sh + s'm$, and $\overline{\beta}_F = \max\{\beta_F, 2\}$.
\end{corollary}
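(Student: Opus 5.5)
The corollary follows by combining the uniform tail bound with the format computation for neural networks. I will first identify the Pfaffian format of each pairwise difference $g_{ij}=F_j-F_i$, and then invoke Theorem~\ref{thm:condition-uniform} with that format. No new geometry is needed; the work is bookkeeping of formats.

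\textbf{Step 1 (format of $F$).} I apply Proposition~\ref{prop:nn-format}. The activations are autonomous with format $(\alpha,\beta,s)$, $s\ge 1$, and the output function is autonomous with format $(\alpha',\beta',s')$. The proposition then gives that $F=F^{\ell+1}\circ\cdots\circ F^1$ is Pfaffian with format $(\ell(\alpha+\beta-1)+\alpha',\,\beta',\,sh+s'm)=(\alpha_F,\beta_F,s_F)$, componentwise.

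\textbf{Step 2 (format of $g_{ij}$).} Each $g_{ij}=F_j-F_i$ is a difference of two components of $F$. By Proposition~\ref{prop:nn-format}, both components are expressed over the common chain consisting of the hidden-unit chain elements (length $sh$) together with the output chain elements (length $s'm$). Remark~\ref{re:1} then applies: a linear combination of functions over the same chain keeps that chain, and the degree is the maximum of the degrees. So $g_{ij}\in\Pfaff_{\alpha_F,\beta_F,s_F}(\R^n)$.

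This is the step needing care. Lemma~\ref{le:pfaffalgebra}(1) applied naively would double the order to $2s_F$. One must observe that the chain constructed in Lemma~\ref{le:composition} for the vector-valued map $F$ serves all $m$ outputs simultaneously; this is exactly why the order carries the term $s'm$. Example~\ref{ex:softmax} shows the degree can also shift when components are expressed over another component's chain. So I will check that the construction in Proposition~\ref{prop:nn-format} already writes every $F_i$ over the full concatenated chain with degree $\beta'$. If that fails for a particular output map, the fallback is to absorb the discrepancy into the stated format by monotonicity of $\Pfaff_{\alpha,\beta,s}$ in its parameters (the Remark after the definition).

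\textbf{Step 3 (conclude).} The $V_{ij}=\mathcal{Z}(g_{ij})$ are bounded with non-vanishing gradient by hypothesis, so they are bounded Pfaffian hypersurfaces of format $(\alpha_F,\beta_F,s_F)$. Theorem~\ref{thm:condition-uniform} then gives the claimed bound verbatim, with $\overline{\beta}_F=\max\{\beta_F,2\}$ and the constant $C_{\alpha_F,\beta_F,s_F,n}$ from Theorem~\ref{thm:prob_bound}.
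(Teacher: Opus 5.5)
Your proposal is correct and follows the paper's proof exactly: Proposition~\ref{prop:nn-format} gives $F$ the format $(\alpha_F,\beta_F,s_F)$ over a single concatenated chain, Remark~\ref{re:1} keeps $g_{ij}=F_j-F_i$ in that same format, and Theorem~\ref{thm:condition-uniform} concludes. The one slip is your fallback, which is logically backwards: monotonicity of $\Pfaff_{\alpha,\beta,s}$ only lets a smaller format be regarded as a larger one, so it could never absorb an excess into $(\alpha_F,\beta_F,s_F)$. The fallback is never needed, however, since in the concatenated chain each $F_i$ is already a degree-$\beta'$ polynomial in its own $s'$ output-chain elements.
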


\begin{proof}
By Proposition~\ref{prop:nn-format}, the network output has Pfaffian format $(\alpha_F, \beta_F, s_F)$. Since $g_{ij} = F_j - F_i$ shares the same Pfaffian chain (Remark~\ref{re:1}), each $V_{ij}$ is a Pfaffian hypersurface with the same format. The result follows from Theorem~\ref{thm:condition-uniform}.
\end{proof}

For single-hidden-layer sigmoid networks, the tube bound of Theorem~\ref{thm:prob_bound_single_layer} yields a sharper tail bound in which the Khovanskii prefactor $2^{h(h-1)/2}$ of Corollary~\ref{cor:nn-condition} is replaced by a polynomial in the width.

\begin{corollary}[Single-hidden-layer sigmoid]\label{cor:nn-condition-single-layer}
  Let $F\colon \R^n\to \R^m$ be a classifier implemented by a neural network with a single hidden layer of width $w$ and logistic sigmoid activation, a linear output layer, and first-layer weights $a_k\in\Q^n$ with common denominator~$q$. Set $L=q\max_{k,i}|a_{ki}|$ and let $K(n,L)$ be the constant of Proposition~\ref{prop:single-layer-ideg}. Assume that for each pair $i<j$ the decision boundary $V_{ij}=\mathcal{Z}(g_{ij})$ is smooth with non-vanishing gradient and is contained in the closed ball $B(p,\rho)$. If $X$ is uniformly distributed on $B(p,\rho)$, then for every $t>0$,
  \begin{equation}\label{eq:single-layer-condition-bound}
    \prob\{\mC_p(X)>t\} \;\leq\; \binom{m}{2}\cdot 2\,K(n,L)\,w^{2n}\left[\left(1+\frac{1}{t}\right)^n-1\right].
  \end{equation}
\end{corollary}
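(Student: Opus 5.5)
The plan is to reduce to Theorem~\ref{thm:prob_bound_single_layer}, applied to each pairwise boundary, and then follow the union-bound argument of Theorem~\ref{thm:condition-uniform}.

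First we check that each $g_{ij}$ has the form required by Proposition~\ref{prop:single-layer-degree}. With a linear output layer we have $F_r(x)=c_r+\sum_{k=1}^w d_{rk}\,\sigma(a_k^{\trans}x+b_k)$, where the first-layer weights $a_k,b_k$ are shared by all outputs. Hence
\[
  g_{ij}=F_j-F_i=(c_j-c_i)+\sum_{k=1}^w (d_{jk}-d_{ik})\,\sigma(a_k^{\trans}x+b_k).
\]
This is a single-hidden-layer sigmoid network of width $w$. It has the same first-layer weights, so the same common denominator $q$ and the same lattice constant $L$. Terms whose coefficient $d_{jk}-d_{ik}$ vanishes can be kept with zero coefficient, or dropped, since that only decreases the width. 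Non-constancy of $g_{ij}$ follows from the non-vanishing gradient on $V_{ij}$, assuming $V_{ij}$ is non-empty. If $V_{ij}$ is empty, its contribution is zero.

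Next we need $V_{ij}$ to be a smooth compact hypersurface with $\nabla g_{ij}\neq 0$ on it. This is assumed, and compactness follows from $V_{ij}\subset B(p,\rho)$ because $V_{ij}$ is closed. Therefore Proposition~\ref{prop:single-layer-ideg} gives $\md_i(V_{ij})\le K(n,L)w^{2n}$ with the same constant for every pair. Theorem~\ref{thm:prob_bound_single_layer} then yields, for every $\varepsilon>0$,
\[
  \prob\{d(X,V_{ij})\le\varepsilon\}\le 2K(n,L)w^{2n}\bigl[(1+\varepsilon/\rho)^n-1\bigr].
\]

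Finally, the event argument is exactly as in Theorem~\ref{thm:condition-uniform}. For $X\in B(p,\rho)$, the event $\mC_p(X)>t$ forces $\Delta(X)<\|X-p\|/t\le\rho/t$. By the inclusion~\eqref{eq:sigma-inclusion}, $X$ then lies within $\rho/t$ of some $V_{ij}$. Points of $\Sigma$ itself, where $\mC_p$ is undefined, form a null set by the same bound. We take $\varepsilon=\rho/t$, so that $\varepsilon/\rho=1/t$, and sum over the $\binom m2$ pairs to obtain~\eqref{eq:single-layer-condition-bound}.

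The only delicate point, a mild one, is the uniformity of the constant across pairs. It holds because $K(n,L)$ depends only on $n$ and $L$, and those are inherited unchanged from the shared first layer.
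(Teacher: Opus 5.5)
Your proposal is correct and follows the paper's proof essentially verbatim. You observe that each $g_{ij}=F_j-F_i$ is a single-hidden-layer sigmoid network sharing the first-layer weights, hence the same $q$, $L$ and $K(n,L)$; you apply Theorem~\ref{thm:prob_bound_single_layer} with $\varepsilon=\rho/t$; and you union-bound over the $\binom{m}{2}$ pairs via the inclusion~\eqref{eq:sigma-inclusion}. Your remarks on empty $V_{ij}$, on non-constancy of $g_{ij}$, and on $\Sigma$ being a null set are harmless refinements that the paper leaves implicit.
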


\begin{proof}
  As in the proof of Theorem~\ref{thm:condition-uniform}, the event $\mC_p(X)>t$ implies $\Delta(X) < \rho/t$, so by a union bound
  \begin{equation*}
    \prob\{\mC_p(X)>t\} \;\leq\; \sum_{i<j}\prob\{d(X,V_{ij})<\rho/t\}.
  \end{equation*}
  Each $g_{ij}=F_j-F_i$ has the form $\tilde c_0+\sum_k d_k\,\sigma(a_k^{\trans}x+b_k)$ with $d_k$ the differences of the second-layer output weights of $F$ and the $(a_k,b_k)$ the first-layer weights and biases (which are shared across all pairs $i<j$). Hence each $V_{ij}$ satisfies the hypotheses of Theorem~\ref{thm:prob_bound_single_layer} with the common lattice constant $L$. Applying that theorem with $\varepsilon=\rho/t$ to each $V_{ij}$ and summing gives~\eqref{eq:single-layer-condition-bound}.
\end{proof}

\begin{remark}\label{rem:single-layer-condition-tight}
  The tight form of Corollary~\ref{cor:single-layer-tight-prob-bound} applied to each $V_{ij}$ sharpens the leading term of~\eqref{eq:single-layer-condition-bound}: the coefficient of $1/t$ is governed by $\md_0(V_{ij})\le(2Lw+1)^n$ rather than by $w^{2n}$. In particular, for $t\geq K(n,L)w^n$ the leading term dominates and
  \begin{equation*}
    \prob\{\mC_p(X)>t\} \;\leq\; \binom{m}{2}\Bigl[\,2n\,(2Lw+1)^n\,\tfrac1t + O\bigl(w^{2n}/t^2\bigr)\Bigr]\;=\;\binom{m}{2}\cdot O\!\left(\frac{w^n}{t}\right),
  \end{equation*}
  an explicit $O(w^n/t)$ tail bound, polynomial in the width, valid in the tail regime $t\gtrsim K(n,L)w^n$.
\end{remark}

The Gaussian analogue of Corollary~\ref{cor:nn-condition-single-layer} requires more care. The Fubini reduction of Lemma~\ref{lem:gauss-to-uniform} expresses a Gaussian probability as an integral of uniform-ball probabilities at all radii $r\sigma$, $r \in (0,\infty)$, but Theorem~\ref{thm:prob_bound_single_layer} applies only when the ball contains $V_{ij}$. Splitting the integral at the threshold $r^* = R_V/\sigma$ determined by an enclosing radius $R_V$ of $V_{ij}$, and using the general Pfaffian bound on the small-$r$ part, gives a hybrid bound in which the Khovanskii contribution is multiplied by a Gaussian concentration factor $\gamma_n(R_V/\sigma)$ that decays in $n$ but is independent of the width; see Remark~\ref{rem:gaussian-concentration-regime} for its effect on the width scaling.
In the following result, we use the notation
\begin{equation*}
  \gamma_n(u) \;:=\; \frac{\omega_n}{(2\pi)^{n/2}}\int_0^u r^{n+1}\econst^{-r^2/2}\,\diff r.
\end{equation*}

\begin{proposition}\label{prop:single-layer-gaussian}
  Under the hypotheses of Corollary~\ref{cor:nn-condition-single-layer}, assume further that each pairwise decision boundary $V_{ij}$ is contained in a common ball $B(\overline x, R_V)$ around a fixed centre $\overline x \in \R^n$. Let $X$ be normally distributed around $\overline x$ with covariance $\sigma^2\mathrm{Id}$. Then for every $t > 0$,
  \begin{equation}\label{eq:single-layer-gaussian-bound}
    \prob\{\mC_{\overline x}(X) > t\} \;\leq\; \binom{m}{2}\Bigl[\,B_{\mathrm{BKK}}(t) \;+\; \gamma_n(R_V/\sigma)\cdot B_{\mathrm{Pf}}(t)\,\Bigr],
  \end{equation}
  where
  \begin{align*}
    B_{\mathrm{BKK}}(t) &\;:=\; 2\,K(n,L)\,w^{2n}\!\left[\left(1+\frac1t\right)^{\!n}-1\right],\\
    B_{\mathrm{Pf}}(t) &\;:=\; C_{2,1,w,n}\!\left[\left(1+\frac{5}{t}\right)^{\!n} - \left(1+\frac{1}{t}\right)^{\!n}\right],
  \end{align*}
  $K(n,L)$ is the constant from Proposition~\ref{prop:single-layer-ideg} and $C_{2,1,w,n}$ is the Pfaffian constant of Theorem~\ref{thm:prob_bound} applied to the format $(\alpha,\beta,s) = (2,1,w)$ of a single-hidden-layer sigmoid network.
\end{proposition}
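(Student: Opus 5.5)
The plan is to reduce to a single pairwise boundary and then split the Gaussian Fubini integral at the enclosing radius. First, exactly as in the proof of Theorem~\ref{thm:condition-gaussian}, the event $\mC_{\overline x}(X)>t$ together with the inclusion~\eqref{eq:sigma-inclusion} implies $\dist(X,V_{ij})<\|X-\overline x\|/t$ for some pair $i<j$. A union bound over the $\binom{m}{2}$ pairs then reduces the claim to showing, for each fixed $V:=V_{ij}$ and $A:=\{x\colon \|x-\overline x\|\,\dist(x,V)^{-1}\ge t\}$, that
\[
  \prob\{X\in A\}\le B_{\mathrm{BKK}}(t)+\gamma_n(R_V/\sigma)\,B_{\mathrm{Pf}}(t).
\]

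Next, I would use the Fubini identity~\eqref{eq:gauss-to-uniform} from the proof of Lemma~\ref{lem:gauss-to-uniform}. It writes $\prob\{X\in A\}$ as $\frac{\omega_n}{(2\pi)^{n/2}}\int_0^\infty \prob\{U_{\sigma r}\in A\}\,r^{n+1}\econst^{-r^2/2}\,\diff r$. I would split this integral at $r^*=R_V/\sigma$.

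For $r\ge r^*$, the ball $B(\overline x,\sigma r)$ contains $B(\overline x,R_V)\supseteq V$, so the hypothesis $V\subset B(p,\rho)$ of Theorem~\ref{thm:prob_bound_single_layer} holds with $p=\overline x$ and $\rho=\sigma r$. Since $U_{\sigma r}\in A$ forces $\dist(U_{\sigma r},V)\le \sigma r/t$, that theorem with $\varepsilon=\sigma r/t$ gives $\prob\{U_{\sigma r}\in A\}\le B_{\mathrm{BKK}}(t)$, uniformly in $r$. Bounding the weight $\frac{\omega_n}{(2\pi)^{n/2}}\int_{r^*}^\infty r^{n+1}\econst^{-r^2/2}\,\diff r=1-\gamma_n(r^*)$ by $1$ bounds this part by $B_{\mathrm{BKK}}(t)$.

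For $r<r^*$, I would instead use the general Pfaffian bound, Theorem~\ref{thm:prob_bound}, which needs no containment. The function $g_{ij}=\tilde c_0+\sum_k d_k\sigma(a_k^{\trans}x+b_k)$ is Pfaffian with chain $(\sigma(a_k^{\trans}x+b_k))_{k\le w}$. Each chain element has derivative $a_{kj}\,q_k(1-q_k)$ of degree $2$, and $g_{ij}$ is linear in the chain, so the format is $(2,1,w)$. It is bounded with non-vanishing gradient, and $\overline\beta=2$ gives $\alpha+\overline\beta+1=5$. Theorem~\ref{thm:prob_bound} with radius $\sigma r$ and $\varepsilon=\sigma r/t$ then yields $\prob\{U_{\sigma r}\in A\}\le B_{\mathrm{Pf}}(t)$, again independent of $r$. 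Integrating against the weight over $[0,r^*)$ produces exactly the factor $\gamma_n(R_V/\sigma)$. Adding the two parts and summing over pairs gives~\eqref{eq:single-layer-gaussian-bound}.

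The only step needing care is checking that the Pfaffian format is $(2,1,w)$ as claimed and that every hypothesis of Theorem~\ref{thm:prob_bound} holds for $V_{ij}$. Boundedness follows from $V_{ij}\subset B(\overline x,R_V)$, and non-vanishing gradient is assumed. The boundary cases $\rho+\varepsilon$ non-regular are already absorbed by that theorem's continuity argument. Measurability of $r\mapsto\prob\{U_{\sigma r}\in A\}$ is needed for the split integral, and it holds because this function is continuous in $r$.
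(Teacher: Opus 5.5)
Your proposal is correct and follows the paper's proof essentially step for step. The steps are the union bound over pairs via~\eqref{eq:sigma-inclusion}, the Fubini identity~\eqref{eq:gauss-to-uniform} split at $r^*=R_V/\sigma$, Theorem~\ref{thm:prob_bound_single_layer} on $r\ge r^*$ with the weight bounded by $1$, and Theorem~\ref{thm:prob_bound} with format $(2,1,w)$ on $r<r^*$ producing the factor $\gamma_n(R_V/\sigma)$. Your explicit check that the format is $(2,1,w)$ and that $\alpha+\overline\beta+1=5$ fills in details the paper only cites, and keeping the exact outer weight $1-\gamma_n(R_V/\sigma)$ would even give the slightly sharper bound $(1-\gamma_n(R_V/\sigma))\,B_{\mathrm{BKK}}(t)+\gamma_n(R_V/\sigma)\,B_{\mathrm{Pf}}(t)$.
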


\begin{proof}
  Fix a pair $i < j$ and set $A_{ij} := \{x \in \R^n : \|x - \overline x\|\cdot\dist(x, V_{ij})^{-1} \geq t\}$. By the Fubini identity~\eqref{eq:gauss-to-uniform} of Lemma~\ref{lem:gauss-to-uniform},
  \begin{equation*}
    \prob\{X \in A_{ij}\} \;=\; \frac{\omega_n}{(2\pi)^{n/2}} \int_0^\infty \prob\{U_{\sigma r} \in A_{ij}\}\,r^{n+1}\econst^{-r^2/2}\,\diff r,
  \end{equation*}
  with $U_{\sigma r}$ uniform on $B(\overline x, \sigma r)$. Split the integral at $r^* := R_V/\sigma$.

  In the region $r \geq r^*$, we get $V_{ij} \subset B(\overline x, R_V) \subset B(\overline x, \sigma r)$. The event $U_{\sigma r} \in A_{ij}$ implies $\dist(U_{\sigma r}, V_{ij}) \leq \sigma r/t$, and Theorem~\ref{thm:prob_bound_single_layer} applied to $V_{ij}$ with ball radius $\sigma r$ and $\varepsilon = \sigma r/t$ gives $\prob\{U_{\sigma r} \in A_{ij}\} \leq B_{\mathrm{BKK}}(t)$, a bound independent of $r$ since $\varepsilon/(\sigma r) = 1/t$. Using $\frac{\omega_n}{(2\pi)^{n/2}}\int_0^\infty r^{n+1}\econst^{-r^2/2}\,\diff r = 1$,
  \begin{equation*}
    \frac{\omega_n}{(2\pi)^{n/2}}\int_{r^*}^\infty \prob\{U_{\sigma r} \in A_{ij}\}\,r^{n+1}\econst^{-r^2/2}\,\diff r \;\leq\; B_{\mathrm{BKK}}(t).
  \end{equation*}

  In the region $r < r^*$ we use the unconditional Pfaffian bound: each $g_{ij}$ is Pfaffian of format $(2,1,w)$ (Example~\ref{ex:activations}), so Theorem~\ref{thm:prob_bound} applied to $V_{ij}$ with ball radius $\sigma r$ and $\varepsilon = \sigma r/t$ gives $\prob\{U_{\sigma r} \in A_{ij}\} \leq B_{\mathrm{Pf}}(t)$, again independent of $r$. Integrating over $[0, r^*]$ contributes $\gamma_n(R_V/\sigma)\,B_{\mathrm{Pf}}(t)$.

  Summing the two regions and applying a union bound over the $\binom{m}{2}$ pairs $i < j$ yields~\eqref{eq:single-layer-gaussian-bound}.
\end{proof}

\begin{remark}[Concentration regime and the width dependence]\label{rem:gaussian-concentration-regime}
  The factor $\gamma_n(u)$ is a chi-type probability, concentrated near $u = \sqrt{n+1}$ (the mode of the density $r^{n+1}\econst^{-r^2/2}$). Dropping the exponential factor in the integrand yields the elementary upper bound
  \begin{equation}\label{eq:gamma-elementary}
    \gamma_n(u) \;\leq\; \frac{u^{n+2}}{(n+2)\,2^{n/2}\,\Gamma(n/2+1)},
  \end{equation}
  which is superexponentially small in $n$ whenever $u \ll \sqrt n$; thus for a tight perturbation ($u = R_V/\sigma \ll \sqrt n$) the prefactor $\gamma_n(R_V/\sigma)$ multiplying the Khovanskii term is small.

  This $n$-decay must not be confused with the dependence on the width. The two terms in~\eqref{eq:single-layer-gaussian-bound} scale very differently in $w$: since $K(n,L)$ is independent of $w$, the BKK term satisfies $B_{\mathrm{BKK}}(t) = \Theta(w^{2n})$, whereas the Pfaffian term carries the constant of Theorem~\ref{thm:prob_bound},
  \begin{equation*}
    C_{2,1,w,n} \;=\; 6\cdot 2^{w(w-1)/2}\,(5n+2)^{w},
  \end{equation*}
  which is super-exponential in $w$, and the concentration factor $\gamma_n(R_V/\sigma)$ is \emph{independent} of $w$. Consequently, for a fixed architecture in which $n$ and the ratio $R_V/\sigma$ are held fixed while $w \to \infty$, the Khovanskii contribution $\gamma_n(R_V/\sigma)\cdot B_{\mathrm{Pf}}(t)$ eventually dominates the polynomial BKK term: the Gaussian bound~\eqref{eq:single-layer-gaussian-bound} does \emph{not} inherit the polynomial-in-$w$ rate of the uniform case (Corollary~\ref{cor:nn-condition-single-layer}) at fixed Gaussian scale.

  The precise criterion under which the Gaussian tail does inherit the BKK rate is the break-even inequality
  \begin{equation}\label{eq:break-even}
    \gamma_n(R_V/\sigma)\cdot B_{\mathrm{Pf}}(t) \;\leq\; B_{\mathrm{BKK}}(t),
  \end{equation}
  in which case $\prob\{\mC_{\overline x}(X) > t\} \le 2\binom{m}{2} B_{\mathrm{BKK}}(t)$ is polynomial in $w$. Since the $t$-dependent brackets in $B_{\mathrm{Pf}}$ and $B_{\mathrm{BKK}}$ have ratio bounded independently of $t$, condition~\eqref{eq:break-even} is essentially a joint condition on $n$, $w$ and $R_V/\sigma$. Using~\eqref{eq:gamma-elementary} and $C_{2,1,w,n} = 6\cdot 2^{w(w-1)/2}(5n+2)^{w}$, its dominant part is governed by the Khovanskii factor $2^{w(w-1)/2}$: up to the sub-dominant factors $w^{2n}$ and $(5n+2)^{w}$, the inequality~\eqref{eq:break-even} forces the Gaussian scale to grow with the width as
  \begin{equation}\label{eq:sigma-vs-w}
    \frac{R_V}{\sigma} \;\lesssim\; 2^{-\,w(w-1)/\bigl(2(n+2)\bigr)},
    \qquad\text{equivalently}\qquad
    \sigma \;\gtrsim\; R_V\cdot 2^{\,w(w-1)/\bigl(2(n+2)\bigr)}.
  \end{equation}
\end{remark}

\begin{example}[Sigmoid activation, multi-layer]\label{ex:sigmoid-condition}
For the logistic sigmoid $\sigma(x) = (1+\econst^{-x})^{-1}$ with format $(2,1,1)$ and linear output layer, we get $\alpha_F = 2\ell$, $\beta_F = 1$, $s_F = h$, and $\overline{\beta}_F = 2$, so
\begin{equation*}
  \prob\{\mC_p(X) > t\} \leq \binom{m}{2}\cdot 6\cdot 2^{\frac{h(h-1)}{2}}\big(n(4\ell+1)+2\big)^h\left[\left(1+\frac{2\ell+3}{t}\right)^n-\left(1+\frac1t\right)^n\right].
\end{equation*}
The Gaussian variant (Theorem~\ref{thm:condition-gaussian}) gives, for $X\sim \mathcal{N}(\overline{x}, \sigma^2\mathrm{Id})$,
\begin{equation*}
  \prob\{\mC_{\overline{x}}(X) > t\} \leq \binom{m}{2}\cdot 6\cdot 2^{\frac{h(h-1)}{2}}\big(n(4\ell+1)+2\big)^h\left[\left(1+\frac{2\ell+3}{t}\right)^n - \left(1+\frac{1}{t}\right)^n\right].
\end{equation*}
For large $t$, both local bounds decay as $\binom{m}{2}\cdot 6\cdot 2^{h(h-1)/2}(n(4\ell+1)+2)^h \cdot n(2\ell+2)/t$.
\end{example}

\begin{example}[Sigmoid activation, single hidden layer]\label{ex:sigmoid-condition-single}
For $\ell=1$ and $h=w$, with rational first-layer weights and lattice constant $L$, Corollary~\ref{cor:nn-condition-single-layer} gives
\begin{equation*}
  \prob\{\mC_p(X)>t\} \;\leq\; \binom{m}{2}\cdot 2\,K(n,L)\,w^{2n}\left[\left(1+\frac{1}{t}\right)^n-1\right]
\end{equation*}
whenever $V_{ij}\subset B(p,\rho)$ for all pairs $i<j$. For $t\geq K(n,L)w^n$ the leading term governs and this decays as $\binom{m}{2}\cdot 2n\,(2Lw+1)^n/t = \binom{m}{2}\,O(w^n/t)$ by Remark~\ref{rem:single-layer-condition-tight}. The Khovanskii prefactor $2^{w(w-1)/2}\cdot(n\cdot 5 + 2)^w$ of Example~\ref{ex:sigmoid-condition}, exponential in the width, is thereby replaced by a quantity polynomial in $w$ (of degree $2n$), with sharp tail rate $O(w^n/t)$ in the regime $t\gtrsim K(n,L)w^n$.
\end{example}

\section{Conclusions}\label{sec:conclusions}
We have generalised bounds for the volume of tubular neighbourhoods from the algebraic setting~\cite{lotz2015volume} to the case of smooth Pfaffian hypersurfaces (Theorem~\ref{thm:prob_bound}), and applied these to obtain condition number tail bounds for neural network classifiers with Pfaffian activations (Section~\ref{sec:robustness}). While these results are of theoretical interest, they point to two natural directions for further development.

\subsection{Improved bounds for neural networks}\label{subsec:improved-nn}
The constant $C_{\alpha,\beta,s,n}$ in Theorem~\ref{thm:prob_bound} contains the factor $2^{s(s-1)/2}$, which for a sigmoid network with $h$ hidden units becomes $2^{h(h-1)/2}$ (Example~\ref{ex:sigmoid-condition}). The factor arises in the Khovanskii induction (Theorem~\ref{thm:khovanskii}), which peels chain elements one at a time, introducing one doubling per element and $\sum_{j=1}^{h-1}j = h(h-1)/2$ doublings in total. For a single hidden layer ($\ell=1$) this exponential factor has already been eliminated: Corollary~\ref{cor:nn-condition-single-layer} replaces it by the polynomial $K(n,L)\,w^{2n}$, with sharp $O(w^n)$ leading tail coefficient. The conjecture below therefore concerns the still-open multi-layer case $\ell\ge 2$.

The Pfaffian chain of a sigmoid network, however, has \emph{layered structure} that a generic chain does not: the chain elements decompose as $\boldq = (\boldq^{(1)}, \ldots, \boldq^{(\ell)})$, where $\boldq^{(i)} = (\sigma(z_1^{(i)}), \ldots, \sigma(z_{n_i}^{(i)}))$ are the activations of layer~$i$. This layered chain has two key properties. First, \emph{intra-layer independence:} the chain polynomial of $q_k^{(i)}$ depends on $q_k^{(i)}$ itself and on elements of earlier layers, but not on any other element $q_{k'}^{(i)}$ in the same layer. Second, \emph{layer-dependent degree:} the chain polynomial of a layer-$i$ element has degree $2i$, not the global maximum $2\ell$.

These properties yield at most polynomial improvements. An exponential improvement would require processing entire layers simultaneously in the proof of Khovanskii's theorem. The key observation is that the layer map $\boldq^{(\ell-1)} \mapsto \boldq^{(\ell)}$ factors as $\sigma \circ (\text{affine})$, with Jacobian $\diag(\sigma'(z_1), \ldots, \sigma'(z_{n_\ell})) \cdot A^{(\ell)}$, where $\sigma'(z_k) = q_k^{(\ell)}(1 - q_k^{(\ell)}) > 0$ for all finite $z_k$. The strict positivity of $\sigma'$ and the intra-layer independence of the chain polynomials are structural features that a appear to block an analogue of the Khovanskii--Rolle lemma that one could exploit.

\begin{conjecture}[Block Khovanskii bound]\label{conj:block-khovanskii}
  Let $f\colon \R^n \to \R$ be the output of a fully connected neural network with $\ell$ hidden layers of widths $n_1, \ldots, n_\ell$ and a Pfaffian activation function $\sigma$ satisfying $\sigma' > 0$. Then the number of regular solutions of a Pfaffian system involving $f$ and its derivatives, of the form~\eqref{eq:pfaffian-system}, is bounded by
  \begin{equation*}
    2^{\frac{\ell(\ell-1)}{2} + \sum_{i=1}^{\ell}\frac{n_i(n_i-1)}{2}} \cdot p(\alpha, \beta, n, \ell, n_1, \ldots, n_\ell),
  \end{equation*}
  where $p$ is a polynomial in the indicated quantities.
\end{conjecture}

For constant width $n_1=\cdots=n_\ell=:\nu$, the conjectured exponent is $\frac{\ell(\ell-1)}{2} + \frac{\ell \nu(\nu-1)}{2}$, which grows as $O(\ell^2 + \ell \nu ^2)$, compared with the standard $\frac{\ell \nu(\ell \nu-1)}{2}$, which grows as $O(\ell^2 \nu^2)$.

Conjecture~\ref{conj:block-khovanskii} is related to, but distinct from, Conjecture~\ref{conj:multi-layer-degree} in Section~\ref{sec:tubular-neural}. The latter asserts a fully polynomial bound $\md(V_{ij})\leq C(n,L)\cdot (\prod_i n_i)^n$ for sigmoid classifier decision boundaries. Conjecture~\ref{conj:block-khovanskii} is a more general intermediate statement that applies to arbitrary Pfaffian systems involving a layered sigmoid chain (e.g., general Pfaffian complete intersections in the sense of Proposition~\ref{prop:pfaffian-degree-bound}). Both conjectures would follow from a sufficiently strong block-Rolle inequality for layered chains.

The main obstacles are twofold. First, the standard Rolle lemma used in the proof of Khovanskii's theorem is inherently one-dimensional: it uses the intermediate value theorem on arcs of a $1$-manifold, and multi-dimensional analogues that give effective, multiplicity-free bounds remain open. Second, even with intra-layer independence, degrees compound across Rolle steps within layersin the standard sequential argument. A proof of Conjecture~\ref{conj:block-khovanskii} would require either a multi-dimensional Rolle lemma or a refined degree-tracking scheme that separates within-layer and between-layer contributions.

\subsection{Non-smooth Pfaffian sets}\label{subsec:nonsmooth}
Theorem~\ref{thm:prob_bound} requires the Pfaffian hypersurface $V$ to be smooth, in the sense that $\nabla f$ is non-vanishing on $V$. This is a genuine restriction: the decision boundaries of neural network classifiers can develop singularities for certain weight configurations, and the volume bounds of Section~\ref{sec:tubular} do not apply at such points.

In the algebraic setting, Basu and Lerario~\cite{basu2021hausdorff} extended tube volume bounds to singular varieties by approximating a singular algebraic set $Z$ with a family $\{Z_t\}_{t > 0}$ of smooth complete intersections of the same dimension and double the degree of $Z$, converging to $Z$ in the Hausdorff metric, and showing that the tube bounds pass to the limit. Their construction uses deformations of the form $D(Q, G, \zeta) = (1-\zeta)Q - \zeta G$, where $Q$ defines the original variety and $G$ is a nonnegative polynomial whose polar varieties, the zero sets of the families $\mathrm{Cr}_k(G) := \{G, \partial G/\partial X_1, \dots, \partial G/\partial X_k\}$, are smooth complete intersections. A crucial step is to show that the set of bad parameter values (those for which the deformed variety fails to be a smooth complete intersection) forms a proper Zariski closed subset of $\C$. 

The fundamental obstacle to extending this strategy to Pfaffian functions is that the construction rests on polar varieties and their properties coming from complex algebraic geometry. Pfaffian functions are real-analytic and have no natural complex extension that preserves their Pfaffian structure; in particular, there is no Pfaffian analogue of polar varieties, nor of the fact that the singular locus of a complex algebraic variety is itself algebraic (see also~\cite[Remark~1.2]{basu2021hausdorff}).








\bibliographystyle{alpha}
\bibliography{refs}

@incollection{Marker1997,
  author={Marker, David},
  title={Khovanskii's Theorem},
  booktitle={Algebraic Model Theory},
  editor={Hart, Bradd T. and Lachlan, Alistair H. and Valeriote, Matthew A.},
  year={1997},
  publisher={Springer Netherlands},
  address={Dordrecht},
  pages={181--193},
  isbn={978-94-015-8923-9},
  doi={10.1007/978-94-015-8923-9_8},
  url={https://doi.org/10.1007/978-94-015-8923-9_8}
}

@misc{bickerton2026sharpnesskhovanskiisbezouttypebound,
      title={On the Sharpness of {K}hovanskii's {B}ezout-type Bound for {P}faffian Functions}, 
      author={Terence Bickerton and Joseph Harrison and Olivia Hornakova and Dominic Le-Mar and Abhiram Natarajan and Nadia Potter},
      year={2026},
      eprint={2606.24373},
      archivePrefix={arXiv},
      primaryClass={math.AG},
      url={https://arxiv.org/abs/2606.24373}, 
}

@book{cox2005using,
  title={Using algebraic geometry},
  author={Cox, David A. and Little, John and O'Shea, Donal},
  volume={185},
  year={2005},
  publisher={Springer Science \& Business Media},
  address={New York, NY},
  edition={2},
  series={Graduate Texts in Mathematics},
  isbn={978-0-387-20706-3}
}

@inproceedings{szegedy2013intriguing,
  author       = {Christian Szegedy and
                  Wojciech Zaremba and
                  Ilya Sutskever and
                  Joan Bruna and
                  Dumitru Erhan and
                  Ian J. Goodfellow and
                  Rob Fergus},
  editor       = {Yoshua Bengio and
                  Yann LeCun},
  title        = {Intriguing properties of neural networks},
  booktitle    = {2nd International Conference on Learning Representations, {ICLR} 2014,
                  Banff, AB, Canada, April 14-16, 2014, Conference Track Proceedings},
  year         = {2014},
  url          = {http://arxiv.org/abs/1312.6199},
  bibsource    = {dblp computer science bibliography, https://dblp.org}
}

@inproceedings{goodfellow2015explaining,
  author    = {Goodfellow, Ian J. and Shlens, Jonathon and Szegedy, Christian},
  title     = {Explaining and Harnessing Adversarial Examples},
  booktitle = {International Conference on Learning Representations},
  year      = {2015},
  eprint    = {1412.6572},
  archivePrefix = {arXiv}
}

@inproceedings{moosavi2016deepfool,
  author    = {Moosavi-Dezfooli, Seyed-Mohsen and Fawzi, Alhussein and Frossard, Pascal},
  title     = {{DeepFool}: A Simple and Accurate Method to Fool Deep Neural Networks},
  booktitle = {Proceedings of the IEEE Conference on Computer Vision and Pattern Recognition},
  pages     = {2574--2582},
  year      = {2016},
  doi       = {10.1109/CVPR.2016.282}
}

@inproceedings{fawzi2016robustness,
  author    = {Fawzi, Alhussein and Moosavi-Dezfooli, Seyed-Mohsen and Frossard, Pascal},
  title     = {Robustness of Classifiers: From Adversarial to Random Noise},
  booktitle = {Advances in Neural Information Processing Systems},
  volume    = {29},
  year      = {2016},
  eprint    = {1608.08967},
  archivePrefix = {arXiv}
}

@inproceedings{fawzi2018adversarial,
  author    = {Fawzi, Alhussein and Fawzi, Hamza and Fawzi, Omar},
  title     = {Adversarial Vulnerability for Any Classifier},
  booktitle = {Advances in Neural Information Processing Systems},
  volume    = {31},
  year      = {2018},
  eprint    = {1802.08686},
  archivePrefix = {arXiv}
}

@inproceedings{mahloujifar2019curse,
  author    = {Mahloujifar, Saeed and Diochnos, Dimitrios I. and Mahmoody, Mohammad},
  title     = {The Curse of Concentration in Robust Learning: Evasion and Poisoning Attacks from Concentration of Measure},
  booktitle = {Proceedings of the AAAI Conference on Artificial Intelligence},
  volume    = {33},
  pages     = {4536--4543},
  year      = {2019}
}

@book{burgisser2013condition,
  title={Condition: The geometry of numerical algorithms},
  author={B{\"u}rgisser, Peter and Cucker, Felipe},
  volume={349},
  year={2013},
  publisher={Springer Science \& Business Media}
}

@inproceedings{macintyre1993finiteness,
author = {Macintyre, Angus and Sontag, Eduardo Daniel},
title = {Finiteness results for sigmoidal ``neural'' networks},
year = {1993},
isbn = {0897915917},
publisher = {Association for Computing Machinery},
address = {New York, NY, USA},
url = {https://doi.org/10.1145/167088.167192},
doi = {10.1145/167088.167192},
booktitle = {Proceedings of the Twenty-Fifth Annual ACM Symposium on Theory of Computing},
pages = {325--334},
numpages = {10},
location = {San Diego, California, USA},
series = {STOC '93}
}

@article{karpinski1997polynomial,
  author    = {Marek Karpinski and Angus Macintyre},
  title     = {Polynomial Bounds for {VC} Dimension of Sigmoidal and General {P}faffian Neural Networks},
  journal   = {J. Comput. Syst. Sci.},
  volume    = {54},
  number    = {1},
  pages     = {169--176},
  year      = {1997},
  doi       = {10.1006/jcss.1997.1477},
  publisher = {Academic Press, Inc.},
  address   = {Orlando, FL, USA}
}

@article{bianchini2014complexity,
  title={On the complexity of neural network classifiers: A comparison between shallow and deep architectures},
  author={Bianchini, Monica and Scarselli, Franco},
  journal={IEEE transactions on neural networks and learning systems},
  volume={25},
  number={8},
  pages={1553--1565},
  year={2014},
  publisher={IEEE}
}

@article{jones2012density,
  title={The density of algebraic points on certain {P}faffian surfaces},
  author={Jones, Gareth O. and Thomas, Margaret E. M.},
  journal={Quarterly journal of mathematics},
  volume={63},
  number={3},
  pages={637--651},
  year={2012},
  publisher={OUP}
}

@article{hendrycks2016gaussian,
  title={Gaussian Error Linear Units ({GELU}s)},
  author={Hendrycks, Dan and Gimpel, Kevin},
  journal={arXiv preprint arXiv:1606.08415},
  year={2016}
}

@inproceedings{glorot2011deep,
  title={Deep sparse rectifier neural networks},
  author={Glorot, Xavier and Bordes, Antoine and Bengio, Yoshua},
  booktitle={Proceedings of the fourteenth international conference on artificial intelligence and statistics},
  pages={315--323},
  year={2011},
  organization={JMLR Workshop and Conference Proceedings}
}

@article{lotz2020weak,
  title={Wilkinson's bus: Weak condition numbers, with an application to singular polynomial eigenproblems},
  author={Lotz, Martin and Noferini, Vanni},
  journal={Foundations of Computational Mathematics},
  volume={20},
  number={6},
  pages={1439--1473},
  year={2020},
  publisher={Springer}
}

@article{lotz2015volume,
  title={On the volume of tubular neighborhoods of real algebraic varieties},
  author={Lotz, Martin},
  journal={Proceedings of the American Mathematical Society},
  volume={143},
  number={5},
  pages={1875--1889},
  year={2015}
}

@article {basu2021hausdorff,
    AUTHOR = {Basu, Saugata and Lerario, Antonio},
     TITLE = {Hausdorff approximations and volume of tubes of singular
              algebraic sets},
   JOURNAL = {Math. Ann.},
  FJOURNAL = {Mathematische Annalen},
    VOLUME = {387},
      YEAR = {2023},
    NUMBER = {1-2},
     PAGES = {79--109},
      ISSN = {0025-5831,1432-1807},
       DOI = {10.1007/s00208-022-02458-w},
       URL = {https://doi.org/10.1007/s00208-022-02458-w},
}

@phdthesis{zell2003quantitative,
  title={Quantitative study of semi-{P}faffian sets},
  author={Zell, Thierry Paul},
  year={2003},
  school={Purdue University}
}

@book{milnor1997topology,
  title={Topology from the Differentiable Viewpoint},
  author={Milnor, John W. and Weaver, David W.},
  isbn={9780691048338},
  lccn={97030986},
  series={Princeton Landmarks in Mathematics and Physics},
  url={https://books.google.co.uk/books?id=BaQYYJp84cYC},
  year={1997},
  publisher={Princeton University Press}
}

@book{bochnak2013real,
  title={Real algebraic geometry},
  author={Bochnak, Jacek and Coste, Michel and Roy, Marie-Fran{\c{c}}oise},
  volume={36},
  year={2013},
  publisher={Springer Science \& Business Media}
}

@article{wilkie1999theorem,
  title={A theorem of the complement and some new o-minimal structures},
  author={Wilkie, Alex J},
  journal={Selecta Mathematica},
  volume={5},
  pages={397--421},
  year={1999},
  publisher={Springer}
}

@article{wilkie1996model,
  title={Model completeness results for expansions of the ordered field of real numbers by restricted {P}faffian functions and the exponential function},
  author={Wilkie, Alex},
  journal={Journal of the American Mathematical Society},
  volume={9},
  number={4},
  pages={1051--1094},
  year={1996}
}

@article {speissegger1999pfaffian,
    AUTHOR = {Speissegger, Patrick},
     TITLE = {The {P}faffian closure of an o-minimal structure},
   JOURNAL = {J. Reine Angew. Math.},
  FJOURNAL = {Journal f\"ur die Reine und Angewandte Mathematik. [Crelle's
              Journal]},
    VOLUME = {508},
      YEAR = {1999},
     PAGES = {189--211},
      ISSN = {0075-4102,1435-5345},
}

@article{gabrielov2004complexity,
  title={Complexity of computations with {P}faffian and {N}oetherian functions},
  author={Gabrielov, Andrei and Vorobjov, Nicolai},
  journal={Normal forms, bifurcations and finiteness problems in differential equations},
  volume={137},
  pages={211--250},
  year={2004},
  publisher={NATO Science Series II}
}

@book{khovanskii1991fewnomials,
  title={Fewnomials},
  author={Khovanskii, Askold Georgievich},
  volume={88},
  year={1991},
  publisher={American Mathematical Soc.}
}

@article{weyl1939volume,
  title={On the volume of tubes},
  author={Weyl, Hermann},
  journal={American Journal of Mathematics},
  volume={61},
  number={2},
  pages={461--472},
  year={1939}
}

@article{steiner1840bestimmung,
  title={{\"U}ber parallele {F}l{\"a}chen},
  author={Steiner, Jakob},
  journal={Monatsberichte der Berliner Akademie der Wissenschaften},
  pages={114--118},
  year={1840}
}

@article{hotelling1939tubes,
  title={Tubes and spheres in $n$-spaces, and a class of statistical problems},
  author={Hotelling, Harold},
  journal={American Journal of Mathematics},
  volume={61},
  number={2},
  pages={440--460},
  year={1939}
}

@book{gray2004tubes,
  title={Tubes},
  author={Gray, Alfred},
  edition={2nd},
  year={2004},
  publisher={Birkh{\"a}user}
}

@article{bernstein1975,
  author  = {Bernstein, David N.},
  title   = {The number of roots of a system of equations},
  journal = {Functional Analysis and its Applications},
  volume  = {9},
  number  = {3},
  pages   = {183--185},
  year    = {1975}
}

@book{ziegler1995,
  author    = {Ziegler, G{\"u}nter M.},
  title     = {Lectures on Polytopes},
  series    = {Graduate Texts in Mathematics},
  volume    = {152},
  publisher = {Springer-Verlag},
  year      = {1995}
}

@article{bihansottile2007,
  author  = {Bihan, Fr{\'e}d{\'e}ric and Sottile, Frank},
  title   = {New fewnomial upper bounds from {G}ale dual polynomial systems},
  journal = {Moscow Mathematical Journal},
  volume  = {7},
  number  = {3},
  pages   = {387--407},
  year    = {2007}
}

@inproceedings{montufar2014,
  author    = {Mont{\'u}far, Guido and Pascanu, Razvan and Cho, Kyunghyun and Bengio, Yoshua},
  title     = {On the Number of Linear Regions of Deep Neural Networks},
  booktitle = {Advances in Neural Information Processing Systems},
  volume    = {27},
  year      = {2014}
}

@inproceedings{hanin2019,
  author    = {Hanin, Boris and Rolnick, David},
  title     = {Complexity of Linear Regions in Deep Networks},
  booktitle = {Proceedings of the 36th International Conference on Machine Learning},
  year      = {2019}
}

@article{zhang2025covering,
  author  = {Zhang, Yifan and Kileel, Joe},
  title   = {Covering Number of Real Algebraic Varieties and Beyond: Improved Bounds and Applications},
  journal = {Foundations of Computational Mathematics},
  year    = {2025},
  doi     = {10.1007/s10208-025-09735-5},
  eprint  = {2311.05116},
  archivePrefix = {arXiv}
}

@inproceedings{raghu2017,
  author    = {Raghu, Maithra and Poole, Ben and Kleinberg, Jon and Ganguli, Surya and Sohl-Dickstein, Jascha},
  title     = {On the Expressive Power of Deep Neural Networks},
  booktitle = {Proceedings of the 34th International Conference on Machine Learning},
  pages     = {2847--2854},
  year      = {2017}
}

@article{dinverno2024vc,
  author  = {D'Inverno, Giuseppe Alessio and Bianchini, Monica and Scarselli, Franco},
  title   = {{VC} dimension of Graph Neural Networks with {P}faffian activation functions},
  journal = {Neural Networks},
  year    = {2024},
  eprint  = {2401.12362},
  archivePrefix = {arXiv}
}

@inproceedings{serra2018,
  author    = {Serra, Thiago and Tjandraatmadja, Christian and Ramalingam, Srikumar},
  title     = {Bounding and Counting Linear Regions of Deep Neural Networks},
  booktitle = {Proceedings of the 35th International Conference on Machine Learning},
  year      = {2018},
  eprint    = {1711.02114},
  archivePrefix = {arXiv}
}

@inproceedings{zhang2018tropical,
  author    = {Zhang, Liwen and Naitzat, Gregory and Lim, Lek-Heng},
  title     = {Tropical Geometry of Deep Neural Networks},
  booktitle = {Proceedings of the 35th International Conference on Machine Learning},
  year      = {2018},
  eprint    = {1805.07091},
  archivePrefix = {arXiv}
}

@inproceedings{marchetti2025neuroalgebraic,
  author    = {Marchetti, Giovanni Luca and Shahverdi, Vahid and Mereta, Stefano and Trager, Matthew and Kohn, Kathl{\'e}n},
  title     = {Algebra Unveils Deep Learning: An Invitation to Neuroalgebraic Geometry},
  booktitle = {Proceedings of the 42nd International Conference on Machine Learning},
  series    = {PMLR},
  volume    = {267},
  year      = {2025},
  eprint    = {2501.18915},
  archivePrefix = {arXiv}
}

@article{cybenko1989approximation,
  author    = {Cybenko, George},
  title     = {Approximation by superpositions of a sigmoidal function},
  journal   = {Mathematics of Control, Signals and Systems},
  volume    = {2},
  number    = {4},
  pages     = {303--314},
  year      = {1989},
  publisher = {Springer}
}

@article{hornik1989multilayer,
  author    = {Hornik, Kurt and Stinchcombe, Maxwell and White, Halbert},
  title     = {Multilayer feedforward networks are universal approximators},
  journal   = {Neural Networks},
  volume    = {2},
  number    = {5},
  pages     = {359--366},
  year      = {1989},
  publisher = {Elsevier}
}

@article{barron1993universal,
  author    = {Barron, Andrew R.},
  title     = {Universal approximation bounds for superpositions of a sigmoidal function},
  journal   = {IEEE Transactions on Information Theory},
  volume    = {39},
  number    = {3},
  pages     = {930--945},
  year      = {1993},
  publisher = {IEEE}
}

@article{pinkus1999approximation,
  author    = {Pinkus, Allan},
  title     = {Approximation theory of the {MLP} model in neural networks},
  journal   = {Acta Numerica},
  volume    = {8},
  pages     = {143--195},
  year      = {1999},
  publisher = {Cambridge University Press}
}

\end{document}